\theoremstyle{definition}
\newtheorem{definition}{Definition}[section]
\newtheorem{ex}[definition]{Example}
\newtheorem{rem}[definition]{Remark}
\theoremstyle{plain}
\newtheorem{prop}[definition]{Proposition}
\newtheorem{lem}[definition]{Lemma}
\newtheorem{coro}[definition]{Corollary}
\newtheorem{teo}[definition]{Theorem}
\newfont{\bbb}{msbm10 scaled\magstephalf}     
\def\im{\mathbf{\hat{i}}}
\def\jm{\mathbf{\hat{j}}}
\def\km{\mathbf{\hat{k}}}
\def\X{\mathds X}
\def\R{\mathbb R}
\def\R{\mbox{\bbb R}}
\def\x{\mathbf{x}}
\def\eu{\mathbf{e}_1}
\def\ed{\mathbf{e}_2}
\def\EO{E_\Omega}
\def\FO{F_\Omega}
\def\GO{G_\Omega}
\def\eo{e_\Omega}
\def\fuo{f_{1\Omega}}
\def\fdo{f_{2\Omega}}
\def\go{g_\Omega}
\def\n{\mathbf{n}}
\def\p{\mathbf{p}}
\def\I{\mathbf{I}}
\def\Ib{\bar{\mathbf{I}}}
\def\II{\mathbf{II}}
\def\w{\mathbf{w}}
\def\id{\mathbb{I}}
\def\Wm{\mathds{W}}
\def\P{\mathds{P}}
\def\Q{\mathds{Q}}
\def\Omegam{\mathbf{\Omega}}
\def\Omegamb{\bar{\mathbf{\Omega}}}
\def\Omegab{\bar{\Omega}}
\def\Lambdam{\mathbf{\Lambda}}
\def\Lambdamb{\bar{\mathbf{\Lambda}}}
\def\mum{\boldsymbol{\mu}}
\def\num{\boldsymbol{\nu}}
\def\rhom{\boldsymbol{\rho}}
\newcommand{\spann}[2]{\left\langle{#1},{#2}\right\rangle}
\newcommand{\Ga}[3]{\Gamma_{{#1}{#3}}^{#2}}
\newcommand{\Ta}[3]{\mathcal{T}_{{#1}{#3}}^{#2}}
\newcommand{\la}[2]{\lambda_{{#1}{#2}}}
\newcommand{\laO}{\lambda_{\Omega}}
\newcommand{\KO}{K_{\Omega}}
\newcommand{\HO}{H_{\Omega}}
\newcommand{\TA}[1]{\boldsymbol{\mathcal{T}}_{#1}}
\newcommand{\GA}[1]{\mathbf{\Gamma}_{#1}}
\newcommand{\TAb}[1]{\bar{\boldsymbol{\mathcal{T}}}_{#1}}
\newcommand{\TAh}[1]{\hat{\boldsymbol{\mathcal{T}}}_{#1}}
\newcommand{\GAb}[1]{\bar{\mathbf{\Gamma}}_{#1}}
\title[The fundamental theorem for singular surfaces]{The fundamental theorem for singular surfaces with limiting tangent planes}
\author{T. A. Medina-Tejeda}
\date{}
\address{Instituto de Ci\^encias Matem\'aticas e de Computa\c{c}\~ao - Universidade de S\~ao Paulo,
Av. Trabalhador s\~ao-carlense, 400 - Centro,
CEP: 13566-590 - S\~ao Carlos - SP, Brazil}
\email{tamedinat@usp.br}
\thanks{The author is supported by CAPES Grant no. PROEX-10359340/D}
\subjclass[2010]{Primary 53A40; Secondary 53A05, 57R45}\keywords{singular surface, frontal, front, relative curvature, first fundamental form, second fundamental form, singular compatibility equations}
\begin{document}
\begin{abstract}
In this paper, we prove a similar result to the fundamental theorem of regular surfaces in classical differential geometry, which extends the classical theorem to the entire class of singular surfaces in Euclidean 3-space known as frontals. Also, we characterize in a simple way these singular surfaces and its fundamental forms with local properties in the differential of its parametrization and decompositions in the matrices associated to the fundamental forms. In particular we introduce new types of curvatures which can be used to characterize wave fronts. The only restriction on the parametrizations that is assumed in several occasions is that the singular set has empty interior. 
\end{abstract}

\maketitle

\section{Introduction}

In recent years, there is a great interest  in the geometry of a special type of singular surface, namely, frontal. Many papers are dedicated to the study of frontals from singularity theory and geometry viewpoints \cite{sajicuspidal,ishifrontal2,ishifrontal}, in particular  wave fronts a subclass of these \cite{arnld-sing-caus,kos,front,MSUY}. The word "front" comes from physical fronts, bounding a domain in which a physical process propagates at a fixed moment in time. For instance, a wave propagating in the 3-Euclidean space with constant speed starting from each point of an ellipsoid in direction of the interior of this (the initial domain to be perturbed)  creates a equidistant surface at time t bounding an interior part of the ellipsoid that it has not been perturbed at time t. In this case, the complete equidistant surface is called
the wave front, this changes as time passes leading to the formation of singularities
along the whole equidistant surface in any time \cite{arnld-sing-caus}. The notion of "frontal" surged as a natural generalization of wave front in the case of hypersurfaces and a generalized definition with equivalences can be found in \cite{ishifrontal2}. A smooth map $\x: U \to \R^3$ defined in an open set $U \subset \R^2$ is called a {\it frontal} if, for all $\p\in U$ there exists a unit normal vector field $\num:V_p \to \R^3$ along $\x$, where $V_p$ is an open set of $U$, $\p\in V_p$. This means, $|\num|=1$ and it is orthogonal to the partial derivatives of $\x$ for each point $(u,v)\in V_p$. If also the singular set $\Sigma(\x)=\{\p\in U: \x \text{ is not immersive at $\p$} \}$ has empty interior we call $\x$ a {\it proper frontal}. Since $\Sigma(\x)$ is closed, this is equivalent to have $\Sigma(\x)^c$ being dense and open in $U$. A frontal $\x$ is a {\it wave front} or simply {\it front} if the pair $(\x,\num)$ is an immersion for all $\p\in U$. There are many examples of frontals which are not wave fronts, cuspidal $S_k$ singularities for instance \cite{sajicuspidal}. The existence of a smooth normal vector field on these singular surfaces determines a plane (the orthogonal space) at singular points that can be understood as a limiting plane of the tangent planes on regular points around them (see Figure \ref{cusp}).

\begin{figure}[h]
	
	\begin{center}
		\includegraphics[scale=0.21]{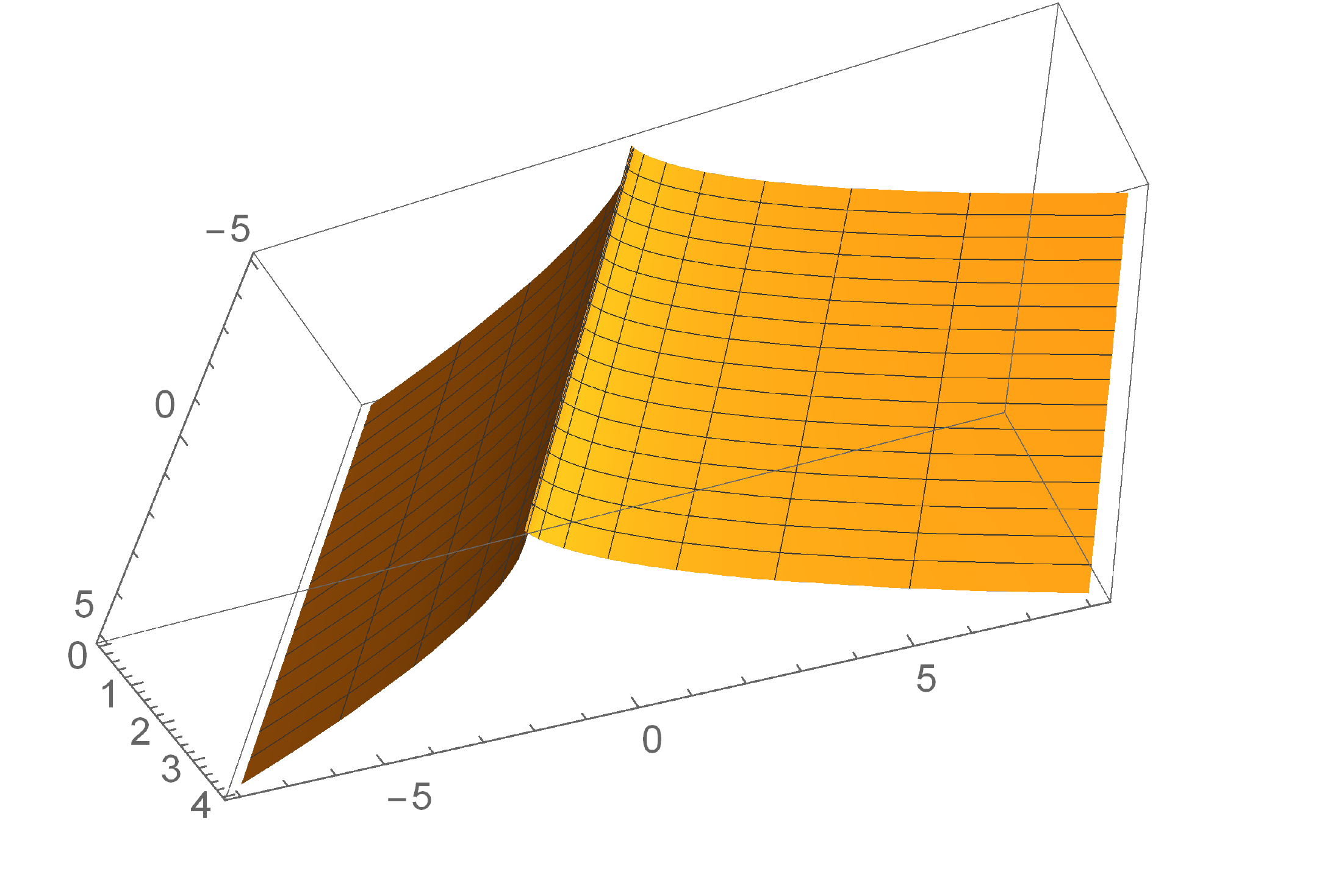}\qquad\qquad \includegraphics[scale=0.29]{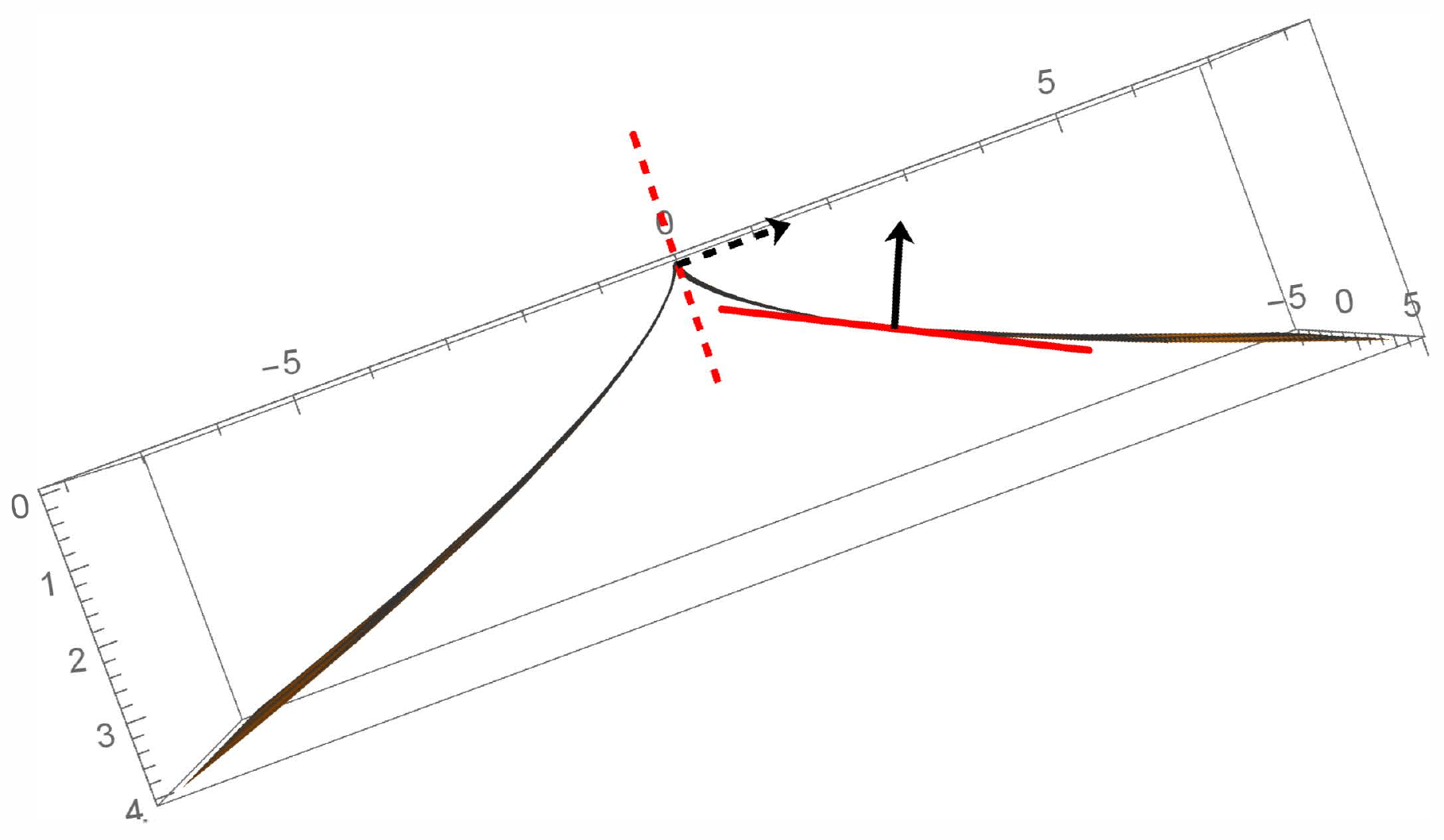}
	\end{center}
	\caption{The cuspidal edge $(\x(u,v)=(u, v^2, v^3))$ and the limiting tangent planes.}\label{cusp}
	
\end{figure}
The cuspidal edge and the swallowtail (see Figure \ref{cusp} and \ref{swallow}) are two types of singular points that represent
the generic singularities in the space of wave fronts with the Whitney $C^{\infty}$-topology. For this reason, all the re-parametrizations and diffeomorphic singular surfaces to these are the most studied and there exist criterias to recognize them \cite{KRSUY,ishifrontal}. However, these singularities are not generic in the space of all frontals (in fact proper frontals are not generic either)\cite{ishifrontal2}. There are some non-proper frontals which are not "surfaces", $\x(u,v)=(uv,0,0)$ for instance and others whose entire image is a surface but locally at some singular points the image of a neighborhood at these is a constant (see example 2.5 \cite{ishifrontal2}). Here we treat frontals in general, but our main result aim to proper frontals. 
\begin{figure}[h]
	
	\begin{center}
		\includegraphics[scale=0.25]{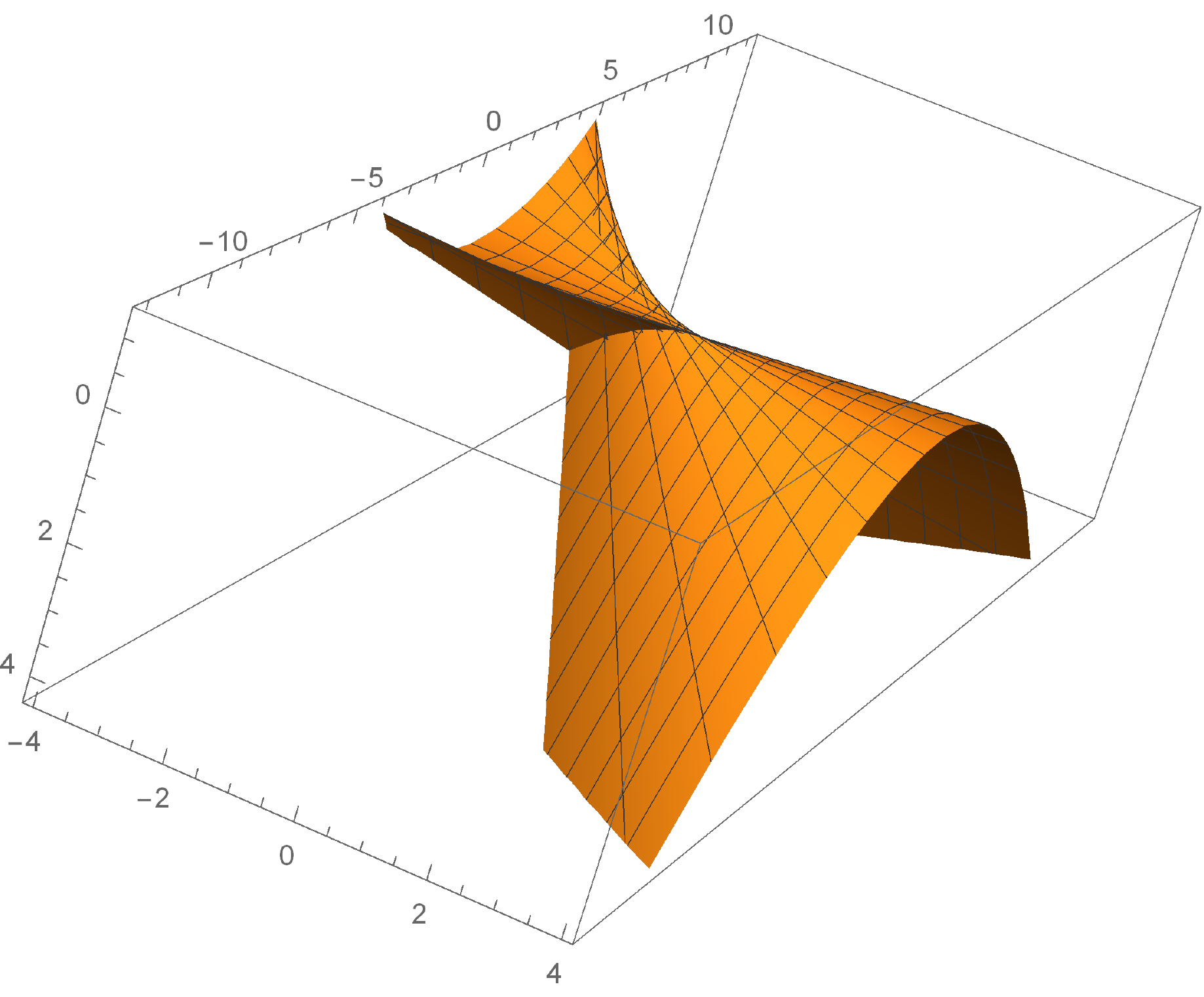}\qquad\qquad \includegraphics[scale=0.35]{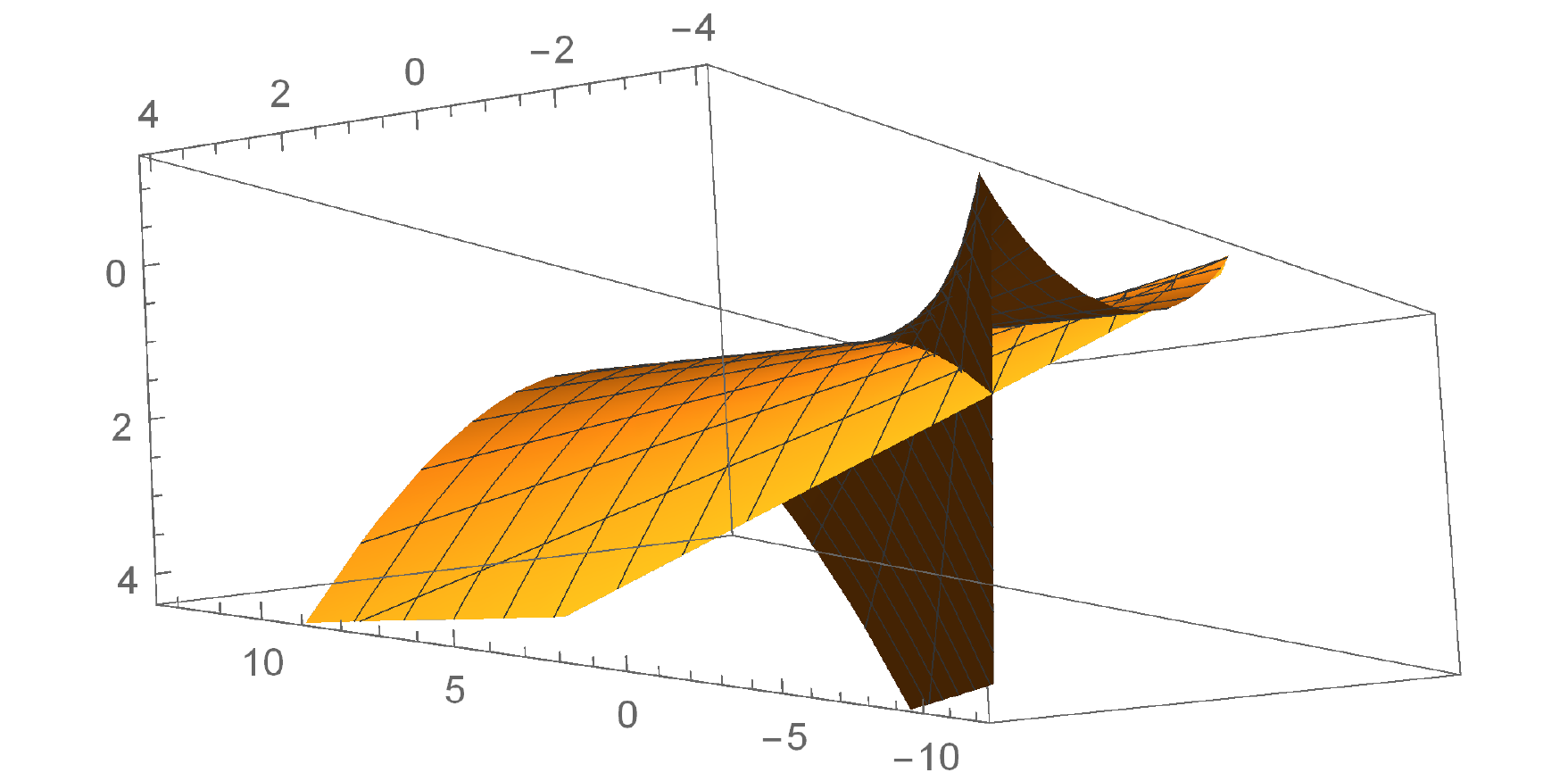}
	\end{center}
	\caption{The swallowtail $(\x(u,v)=(3u^4 + u^2v, 4u^3 + 2uv, v))$, an example of front.}\label{swallow}
	
\end{figure}

 In classical differential geometry, the fundamental theorem of regular surfaces (see\cite{dc,jjs}) states that if we have $E,F,G,e,f,g$ smooth functions defined in an open set $U\subset \R^2$, with $E>0$, $G>0$, $EG-F^2>0$ and the given functions satisfy formally the Gauss and Mainardi-Codazzi equations, then for each $\p\in U$ there exist a neighborhood $V\subset U$ of $p$ and a diffeomorphism $\x:V \to \x(V)\subset \R^3$ such that the regular surface $\x(U)$ has $E,F,G$ and $e,f,g$ as coefficients of the first and second fundamental forms, respectively. Furthermore, if $U$ is connected and if ${\mathbf{\bar{\x}}}:U \to {\mathbf{\bar{\x}}}(U)\subset \R^3$ is another diffeomorphism satisfying the same conditions, then there exist a translation $\mathbf{T}$ and a proper linear orthogonal transformation $\rhom$ in $\R^3$ such that ${\mathbf{\bar{\x}}}=\mathbf{T}\circ \rhom \circ \x$.\\Gauss equation: 
\begin{align}
\Ga{1}{2}{2u}-\Ga{1}{2}{1v}+\Ga{1}{1}{2}\Ga{1}{2}{1}+\Ga{1}{2}{2}\Ga{1}{2}{2}-\Ga{1}{2}{1}\Ga{2}{2}{2}-\Ga{1}{1}{1}\Ga{1}{2}{2}=-EK\label{Gauss}
\end{align}	
Mainardi-Codazzi equations:
\begin{subequations}
	\label{MC}
	\begin{align}
	e_v-f_u=e\Ga{1}{1}{2}+f(\Ga{1}{2}{2}-\Ga{1}{1}{1})-g\Ga{1}{2}{1}\label{MC1}\\
	f_v-g_u=e\Ga{2}{1}{2}+f(\Ga{2}{2}{2}-\Ga{1}{1}{2})-g\Ga{1}{2}{2}\label{MC2}
	\end{align}	
\end{subequations}
where $K$ is the Gaussian Curvature and $\Ga{i}{j}{k}
$ the Christoffel symbols.

This theorem realizes first and a second fundamental forms compatibles as a regular surface in the euclidean 3-space. In \cite{kos} M. Kossowski gave sufficient conditions for a singular first fundamental form to be realized as a wave front with several restricted characteristics. In Section 5, we prove our main result in theorem \ref{TF} which generalizes the fundamental theorem of regular surfaces mentioned before including now all the proper frontals, with the possibility to distinguish wave fronts from its fundamental forms. To state this theorem, we introduced some additional terminology in Section 2, where we establish the necessary notation, terminology and basic results that we use mostly. In Section 3 we characterize a frontal $\x$ with the differential of $\x$, its fundamental forms with decomposition of matrices and wave fronts with two new curvatures which are related with the Gaussian and mean curvature. In Section 4, we get two groups of equations, which are present in all frontals and guarantee the integrability conditions for the system of PDE that we consider in theorem \ref{TF}. After finishing this paper, I was informed by professor Takashi Nishimura about recent papers by T. Fukunaga and M. Takahashi on geometry of frontals. In \cite{fuku}, they use orthonormal moving frames to study basic invariants and curvatures of framed surfaces. As in our corollary \ref{wf}, they also characterized wave fronts in terms of curvatures.       
\section{Fixing notation, definitions and basic results}\label{section-definition}
We denote $U$ and $V$ in this paper open sets in $\R^2$. Let $\x:U \to \R^3$ be a frontal, and as we are interested in exploring local properties of frontals, restricting the domain if necessary, we can suppose that we have a global normal vector field $\num:U \to \R^3$. There are two possible choices of normal vector fields along $\x$ ($\num$ and $-\num$). We are always assuming that we have chosen one of them and we hold fixed this for all the concepts defined using a normal vector field along $\x$. Let $\mathbf{f}:U \to \R^n$ be a smooth map, we denote by $D\mathbf{f}:=(\frac{\partial \mathbf{f}_i}{\partial x_j})$, the differential of $\mathbf{f}$ and we consider it as a smooth map $D\mathbf{f}:U \to \mathcal{M}_{n\times2}(\R)$. We write $D\mathbf{f}_{x_1}$, $D\mathbf{f}_{x_2}$ the partial derivatives of $D\mathbf{f}$ and $D\mathbf{f}(\p):=(\frac{\partial \mathbf{f}_i}{\partial x_j}(\p))$ for $\p \in U$. Also, all vector in $\R^n$ is identified as vector column in $\mathcal{M}_{n\times1}(\R)$ and if $\mathbf{A}\in \mathcal{M}_{n\times n}(\R)$, $\mathbf{A}_{(i)}$ is the $i^{th}$-row and $\mathbf{A}^{(j)}$ is the $j^{th}$-column of $\mathbf{A}$.  

\begin{definition}
	We call {\it moving base} a smooth map $\Omegam:U \to \mathcal{M}_{3\times2}(\R)$ in which the columns $\w_1, \w_2:U \to \R^3$ of the matrix $\Omegam=\begin{pmatrix}\w_1&\w_2\end{pmatrix}$ are linearly independent smooth vector fields.    	
\end{definition}

\begin{definition}
We call a {\it tangent moving base} of $\x$ a moving base $\Omegam=\begin{pmatrix}\w_1&\w_2\end{pmatrix}$ such that $\x_u,\x_v \in \spann{\w_1}{\w_2}$, where $\spann{}{}$ denotes the linear span vector space.    	
\end{definition}
Let $\x:U \to \R^3$ be a frontal with a global normal vector field $\num:U \to \R^3$. Denoting the inner product by $(\cdot)$ and $()^T$ the operation of transposing a matrix, we set the matrices: 
\begin{subequations}
\label{FF}
\begin{align}
\mathbf{I}=
\begin{pmatrix}
E&F\\
F&G
\end{pmatrix}&:=\begin{pmatrix}
\x_u\cdot\x_u & \x_u\cdot\x_v\\
\x_u\cdot\x_v & \x_v\cdot\x_v
\end{pmatrix}\label{FF1}
\\
\mathbf{II}= 
\begin{pmatrix}
e&f\\
f&g
\end{pmatrix}&:=\begin{pmatrix}
\num\cdot\x_{uu} & \num\cdot\x_{uv}\\
\num\cdot\x_{uv} & \num\cdot\x_{vv}
\end{pmatrix}\label{FF2}	\\
\GA{1}=\begin{pmatrix}
\Ga{1}{1}{1} & \Ga{1}{2}{1}\\
\Ga{2}{1}{1} & \Ga{2}{2}{1}
\end{pmatrix}&:=\begin{pmatrix}
\frac{1}{2}E_u & (F_u-\frac{1}{2}E_v)\\
\frac{1}{2}E_v & \frac{1}{2}G_u
\end{pmatrix}\I^{-1}\label{GA1}
\\
\GA{2}=\begin{pmatrix}
\Ga{1}{1}{2} & \Ga{1}{2}{2}\\
\Ga{2}{1}{2} & \Ga{2}{2}{2}
\end{pmatrix}&:=\begin{pmatrix}
\frac{1}{2}E_v & \frac{1}{2}G_u\\
(F_v-\frac{1}{2}G_u) & \frac{1}{2}G_v
\end{pmatrix}\I^{-1}\label{GA2}
\\
\boldsymbol{\alpha}&:=-\II^{T}\I^{-1}
\end{align}
\end{subequations}
The matrices $\I$ and $\II$ in a non-singular point $\p\in U$ coincide with the matrix representation of the {\it first fundamental form} and of the {\it second fundamental form} respectively. $\GA{1}$, $\GA{2}$ and $\boldsymbol{\alpha}$ are defined in $\Sigma(\x)^c$, they are the Christoffel symbols and the Weingarten matrix. Also observe that, we can compute these matrices in this way: 
\begin{subequations}
	\label{X}
	\begin{align}\I&=D\x^TD\x\label{IX}
	\\
	\II&=-D\x^T D\num
	\\
	\GA{1}&=(D\x_u^TD\x)\I^{-1}\label{T1X} 
	\\
	\GA{2}&=(D\x_v^TD\x)\I^{-1}\label{T2X}\end{align}
\end{subequations}
Let $\Omegam=\begin{pmatrix}\w_1&\w_2\end{pmatrix}$ be a moving base, we denote by $\n:=\frac{\w_1\times\w_2}{\|\w_1\times\w_2\|}$ and we set the matrices:
\begin{subequations}
	\label{O}
	\begin{align}\I_\Omega&=\begin{pmatrix}
	\EO & \FO\\
	\FO & \GO\end{pmatrix}:=\Omegam^T\Omegam\label{IO}
	\\
	\II_\Omega&= 
	\begin{pmatrix}
	\eo&\fuo\\
	\fdo&\go
	\end{pmatrix}:=-\Omegam^T D\n\label{IIO}
	\\
	\TA{1}&=\begin{pmatrix}
	\Ta{1}{1}{1} & \Ta{1}{2}{1}\\
	\Ta{2}{1}{1} & \Ta{2}{2}{1}
	\end{pmatrix}:=(\Omegam_u^T\Omegam)\I_\Omega^{-1}\label{T1O} 
	\\
	\TA{2}&=\begin{pmatrix}
	\Ta{1}{1}{2} & \Ta{1}{2}{2}\\
	\Ta{2}{1}{2} & \Ta{2}{2}{2}
	\end{pmatrix}:=(\Omegam_v^T\Omegam)\I_\Omega^{-1}\label{T2O}
	\\
	\boldsymbol{\mu}&:=-\II_\Omega^T\I_\Omega^{-1}\label{W}
	\end{align}
\end{subequations}
Notice that, these last matrices coincide with $\I,\II,\GA{1},\GA{2}$ and $\boldsymbol{\alpha}$ when $\Omegam=D\x$ is a moving base. Since $\n\cdot\w_{1}=0$ and $\n\cdot\w_{2}=0$, then we have $-\n_u\cdot\w_{1}=\n\cdot\w_{1u}$, $-\n_v\cdot\w_{1}=\n\cdot\w_{1v}$,
$-\n_u\cdot\w_{2}=\n\cdot\w_{2u}$ and $-\n_v\cdot\w_{2}=\n\cdot\w_{2v}$. Therefore,
\begin{align}
\II_\Omega=\begin{pmatrix}
\n\cdot\w_{1u} & \n\cdot\w_{1v}\\
\n\cdot\w_{2u} & \n\cdot\w_{2v}
\end{pmatrix}\label{SFO} 
\end{align}
Also, as $\n_u,\n_v \in \spann{\w_{1}}{\w_{2}}$, there exist real functions  ($\bar{\mu}_{ij}$) $i,j\in\{1,2\}$ defined on $U$, such that: 
\begin{subequations}
	\begin{align}
	\n_u=\bar{\mu}_{11}\w_1+\bar{\mu}_{12}\w_2\\
	\n_v=\bar{\mu}_{21}\w_1+\bar{\mu}_{22}\w_2
	\end{align}
\end{subequations}
Then, $D\n=\Omegam\bar{\mum}^T$, where $\bar{\mum}=(\bar{\mu}_{ij})$. Thus, using (\ref{IIO}) $\II_\Omega=-\Omegam^T D\n=-\Omegam^T\Omegam\bar{\mum}^T=-\I_\Omega\bar{\mum}^T$, therefore $\bar{\mum}=-\II_\Omega^T\I_\Omega^{-1}=\mum$ and we have:
\begin{align}
D\n=\Omegam\mum^T\label{WO}
\end{align}
By last, $\w_1$ and $\w_2$ are linearly independent, the positive-definite quadratic form $(\cdot)$ restricted to $\spann{\w_1}{\w_2}$ has $\I_\Omega=\Omegam^T\Omegam$ as its matrix representation in the base $\{\w_1,\w_2\}$ and therefore $det(\I_\Omega)>0$.

The following is a particular version of Frobenius theorem that can be found in (appendix B\cite{jjs}) or \cite{clt}. 

\begin{teo}[Frobenius]\label{frobg}

Let $\mathbf{\Theta},\mathbf{\Xi}:U\times V\to\R^n$ be smooth vector fields, where $U\subset\R^2$ and $V\subset\R^n$ are open sets. Let $(u_0,v_0)\in U$ be a fixed point. Then for each point $\p \in V$ the system of PDE:
\begin{subequations}
\label{sys1}
\begin{align}
\frac{\partial \x}{\partial u}=\mathbf{\Theta} (u,v,\x(u,v)),\label{eq1.a}
\\
\frac{\partial \x}{\partial v}=\mathbf{\Xi} (u,v,\x(u,v)),\label{eq1.b}
\\
\x(u_0,v_0)=p,\label{eq1.c}
\end{align}	
\end{subequations}
has a unique smooth solution $\x:U_0 \to \R^n$ defined on a neighborhood $U_0$ of $(u_0,v_0) \in U_0$ if and only if, it satisfies the compatibility condition:

\begin{equation}
\frac{\partial \mathbf{\Theta}}{\partial v} + \frac{\partial\mathbf{\Theta}}{\partial\x}\mathbf{\Xi} =\frac{\partial \mathbf{\Xi}}{\partial u} + \frac{\partial\mathbf{\Xi}}{\partial\x}\mathbf{\Theta} \label{compatibility1}
\end{equation} 

\end{teo}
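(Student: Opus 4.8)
The plan is to prove the two implications separately, reducing the PDE system to a pair of ordinary differential equations and then using the compatibility condition to reconcile them. For the necessity direction, suppose a smooth solution $\x$ exists. Differentiating (\ref{eq1.a}) with respect to $v$ and (\ref{eq1.b}) with respect to $u$ by the chain rule gives
\begin{align*}
\frac{\partial^2 \x}{\partial v \partial u} = \frac{\partial \mathbf{\Theta}}{\partial v} + \frac{\partial \mathbf{\Theta}}{\partial \x}\frac{\partial \x}{\partial v}, \qquad \frac{\partial^2 \x}{\partial u \partial v} = \frac{\partial \mathbf{\Xi}}{\partial u} + \frac{\partial \mathbf{\Xi}}{\partial \x}\frac{\partial \x}{\partial u}.
\end{align*}
Substituting (\ref{eq1.a})--(\ref{eq1.b}) and equating the mixed partials (Schwarz's theorem) yields exactly (\ref{compatibility1}) at each point $(u,v,\x(u,v))$ the solution traverses; as the initial value $\p$ ranges over $V$ these points fill an open subset of $U\times V$, whence (\ref{compatibility1}) holds identically by continuity.

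The substantial direction is sufficiency. Assuming (\ref{compatibility1}), I would build the solution by two successive integrations. First, restricting to the line $v = v_0$, I solve the ODE $\partial_u \x(u,v_0) = \mathbf{\Theta}(u, v_0, \x(u,v_0))$ with $\x(u_0,v_0) = \p$; the classical existence and uniqueness theorem for ODEs (Picard--Lindel\"of) gives a unique smooth solution on an interval around $u_0$. Next, for each such fixed $u$, I solve the ODE in $v$, namely $\partial_v \x(u,v) = \mathbf{\Xi}(u,v,\x(u,v))$ with the initial value $\x(u,v_0)$ taken from the first step. Smooth dependence on initial conditions and parameters produces a smooth map $\x$ on a neighborhood $U_0$ of $(u_0,v_0)$ that satisfies (\ref{eq1.b}) identically on $U_0$ and (\ref{eq1.a}) along $v = v_0$.

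It remains to upgrade (\ref{eq1.a}) from the line $v = v_0$ to all of $U_0$, and this is the crux. I introduce the defect $\mathbf{\Psi} := \partial_u \x - \mathbf{\Theta}(u,v,\x)$, which vanishes on $v = v_0$ by construction. Differentiating in $v$, using equality of mixed partials to rewrite $\partial_v\partial_u \x = \partial_u\partial_v \x = \partial_u\big(\mathbf{\Xi}(u,v,\x)\big)$ together with (\ref{eq1.b}), and then invoking (\ref{compatibility1}) to cancel $\partial_v \mathbf{\Theta} + (\partial_{\x}\mathbf{\Theta})\mathbf{\Xi}$ against $\partial_u\mathbf{\Xi} + (\partial_{\x}\mathbf{\Xi})\mathbf{\Theta}$, all the inhomogeneous terms collapse and leave the linear relation
\begin{align*}
\frac{\partial \mathbf{\Psi}}{\partial v} = \frac{\partial \mathbf{\Xi}}{\partial \x}(u,v,\x(u,v))\, \mathbf{\Psi}.
\end{align*}
For each fixed $u$ this is a linear homogeneous ODE in $v$ with initial condition $\mathbf{\Psi}(u,v_0)=0$, so uniqueness forces $\mathbf{\Psi}\equiv 0$ and hence (\ref{eq1.a}) holds on all of $U_0$; uniqueness of the full PDE solution then follows because any solution must satisfy the two defining ODEs, whose solutions are unique. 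The main obstacle is precisely this last step: reducing the $v$-derivative of the defect---via (\ref{compatibility1}) and Schwarz's theorem---to a linear homogeneous ODE, which is exactly what converts the compatibility hypothesis into the vanishing of $\mathbf{\Psi}$.
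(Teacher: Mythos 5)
The paper does not prove this theorem at all: it is imported verbatim as a known ``particular version of Frobenius theorem'' with a pointer to Appendix B of Stoker and to Terng's lecture notes, so there is no in-paper argument to compare yours against. Your proof is, in substance, exactly the standard argument those references contain, and it is correct: the two-stage integration (first in $u$ along $v=v_0$, then in $v$ with parameter $u$), followed by the observation that the defect $\mathbf{\Psi}=\partial_u\x-\mathbf{\Theta}(u,v,\x)$ satisfies the linear homogeneous equation $\partial_v\mathbf{\Psi}=\frac{\partial\mathbf{\Xi}}{\partial\x}\mathbf{\Psi}$ with $\mathbf{\Psi}(u,v_0)=0$, is precisely the mechanism by which the compatibility hypothesis is consumed, and your cancellation computation is right. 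Two small points. First, in the sufficiency step you should note explicitly that the solution produced by the second integration is $C^2$ (smooth dependence on initial data and parameters gives this), since Schwarz's theorem is invoked on $\x$ itself before you know it solves both equations. Second, in the necessity direction your closing phrase ``holds identically by continuity'' overreaches: equating mixed partials gives (\ref{compatibility1}) only on the union of the graphs $(u,v,\x_{\p}(u,v))$, and your local-diffeomorphism observation shows this union contains a neighborhood of $\{(u_0,v_0)\}\times V$, but continuity does not propagate an identity from an open set to all of $U\times V$. This is an imprecision inherited from the loose phrasing of the theorem itself rather than a defect in your construction, and it is harmless here since the paper only ever uses the sufficiency direction.
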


\begin{coro}\label{frobl}

Let $\mathbf{S},\mathbf{T}:U\to \mathcal{M}_{n\times n}(\R)$ be smooth vector fields, where $U$ is an open set in $\R^2$. Let $(u_0,v_0)\in U$ be a fixed point. Then for each point $\mathbf{A} \in GL(n)$ the system of PDE:
\begin{subequations}
\label{sys2}
\begin{align}
\frac{\partial \mathbf{G}}{\partial u}=\mathbf{S}\mathbf{G},\label{eql1.a}
\\
\frac{\partial \mathbf{G}}{\partial v}=\mathbf{T}\mathbf{G},\label{eql1.b}
\\
\mathbf{G}(u_0,v_0)=\mathbf{A},\label{eql1.c}
\end{align}	
\end{subequations}
has a unique smooth solution $\mathbf{G}:U_0 \to GL(n)$ defined on a neighbourhood $U_0$ of $(u_0,v_0) \in U_0$ if and only if, it satisfies the compatibility condition:

\begin{equation}
\frac{\partial \mathbf{S}}{\partial v} - \frac{\partial \mathbf{T}}{\partial u} + [\mathbf{S},\mathbf{T}]=0 \label{compatibility2},
\end{equation} 
where $[\mathbf{S},\mathbf{T}]=\mathbf{S}\mathbf{T}-\mathbf{T}\mathbf{S}$ is the Lie bracket.
\end{coro}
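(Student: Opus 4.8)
The plan is to deduce this corollary directly from the Frobenius theorem (Theorem \ref{frobg}) by treating a matrix-valued unknown as a vector-valued one. First I would identify $\mathcal{M}_{n\times n}(\R)$ with $\R^{n^2}$ and take $V = GL(n)$, which is an open subset of $\R^{n^2}$. Then I define the smooth vector fields $\mathbf{\Theta},\mathbf{\Xi}:U\times V\to\R^{n^2}$ by $\mathbf{\Theta}(u,v,\mathbf{G}):=\mathbf{S}(u,v)\mathbf{G}$ and $\mathbf{\Xi}(u,v,\mathbf{G}):=\mathbf{T}(u,v)\mathbf{G}$, so that under this identification the system (\ref{sys2}) becomes precisely the system (\ref{sys1}), with initial condition $\mathbf{A}\in V$. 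Uniqueness of the solution will then be inherited directly from Theorem \ref{frobg}.

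Next I would translate the compatibility condition (\ref{compatibility1}) into matrix form. Since $\mathbf{\Theta}$ and $\mathbf{\Xi}$ are linear in the state variable $\mathbf{G}$, the explicit partial derivatives give $\partial\mathbf{\Theta}/\partial v = \mathbf{S}_v\mathbf{G}$ and $\partial\mathbf{\Xi}/\partial u = \mathbf{T}_u\mathbf{G}$. Moreover, as $\mathbf{\Theta}$ is linear in $\mathbf{G}$, its differential with respect to the state variable, applied to the vector $\mathbf{\Xi}$, coincides with left-multiplication of $\mathbf{\Xi}$ by $\mathbf{S}$; hence $(\partial\mathbf{\Theta}/\partial\x)\mathbf{\Xi} = \mathbf{S}\mathbf{T}\mathbf{G}$ and, symmetrically, $(\partial\mathbf{\Xi}/\partial\x)\mathbf{\Theta} = \mathbf{T}\mathbf{S}\mathbf{G}$. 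Condition (\ref{compatibility1}) then reads $(\mathbf{S}_v - \mathbf{T}_u + \mathbf{S}\mathbf{T} - \mathbf{T}\mathbf{S})\mathbf{G} = 0$ for all $\mathbf{G}\in V$; evaluating at $\mathbf{G}=\id$ shows this is equivalent to $\mathbf{S}_v - \mathbf{T}_u + [\mathbf{S},\mathbf{T}] = 0$, that is, to (\ref{compatibility2}).

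With this dictionary in hand, both implications follow from Theorem \ref{frobg}. If (\ref{compatibility2}) holds then (\ref{compatibility1}) holds on all of $U\times V$, so for each $\mathbf{A}\in GL(n)$ the system has a unique smooth solution $\mathbf{G}:U_0\to\R^{n^2}$; since this solution takes values in $V = GL(n)$, it is exactly the desired $GL(n)$-valued solution. Conversely, if a solution exists for each initial matrix $\mathbf{A}\in GL(n)$, Theorem \ref{frobg} forces (\ref{compatibility1}) to hold on $U\times V$, and evaluating at $\mathbf{G}=\id$ recovers (\ref{compatibility2}).

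The step requiring the most care is the translation of the Jacobian term $(\partial\mathbf{\Theta}/\partial\x)\mathbf{\Xi}$: one must verify that, under the vectorization $\mathcal{M}_{n\times n}(\R)\cong\R^{n^2}$, the differential of the linear map $\mathbf{G}\mapsto\mathbf{S}\mathbf{G}$ is again left-multiplication by $\mathbf{S}$, so that the matrix products $\mathbf{S}\mathbf{T}\mathbf{G}$ and $\mathbf{T}\mathbf{S}\mathbf{G}$ emerge correctly and their difference produces the Lie bracket. A secondary point is that the $GL(n)$-valuedness of the solution is automatic: either by choosing $V=GL(n)$ as above, so the solution stays in $GL(n)$ near $(u_0,v_0)$, or, if one prefers to define $\mathbf{\Theta},\mathbf{\Xi}$ on all of $U\times\R^{n^2}$, by noting that $\det\mathbf{G}(u_0,v_0)=\det\mathbf{A}\neq 0$ and shrinking $U_0$ so that $\det\mathbf{G}$ remains nonzero.
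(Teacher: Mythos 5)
Your proposal is correct and follows essentially the same route as the paper: identify $\mathcal{M}_{n\times n}(\R)$ with $\R^{n^2}$, set $\mathbf{\Theta}(u,v,\mathbf{X})=\mathbf{S}\mathbf{X}$ and $\mathbf{\Xi}(u,v,\mathbf{X})=\mathbf{T}\mathbf{X}$, check that the compatibility condition (\ref{compatibility1}) reduces to (\ref{compatibility2}), and invoke Theorem \ref{frobg}. You simply fill in the details (linearity of the state dependence, evaluation at the identity, $GL(n)$-valuedness) that the paper leaves implicit.
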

\begin{proof}
Identifying $\mathcal{M}_{n\times n}(\R)\equiv \R^{n^2}$ and defining $\mathbf{\Theta}(u,v,\mathbf{X}):=\mathbf{S}\mathbf{X}$ and $\mathbf{\Xi}(u,v,\mathbf{X}):=\mathbf{T}\mathbf{X}$ for $\mathbf{X}\in \mathcal{M}_{n\times n}(\R)$, the compatibility condition (\ref{compatibility1}) is equivalent to (\ref{compatibility2}) and by theorem \ref{frobg} follows the result.
\end{proof}
\section{Characterizing a frontal and its fundamental forms}

\begin{prop}\label{TMB}
	Let $\x:U \to \R^3$ be a smooth map with $U\subset \R^2$ an open set.Then, $\x$ is a frontal if and only if, for all $\p \in U$ there is a tangent moving base $\Omegam:V_p \to \mathcal{M}_{3\times2}(\R)$ of $\x$ with $V_p \subset U$ a neighborhood of $\p$. 
\end{prop}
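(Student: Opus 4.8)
The plan is to prove the two implications separately, in each direction passing between a unit normal field and a spanning pair of tangent vectors by means of the cross product in $\R^3$.

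For the reverse implication, I would start from the hypothesis that each $\p \in U$ admits a tangent moving base $\Omegam = \begin{pmatrix}\w_1 & \w_2\end{pmatrix}$ on a neighborhood $V_p$. Since $\w_1$ and $\w_2$ are linearly independent at every point, $\w_1 \times \w_2$ never vanishes, so $\n := \frac{\w_1 \times \w_2}{\|\w_1 \times \w_2\|}$ is a well-defined smooth map on $V_p$, being the quotient of the smooth field $\w_1 \times \w_2$ by the smooth, strictly positive function $\|\w_1 \times \w_2\|$. By construction $|\n| = 1$ and $\n \cdot \w_1 = \n \cdot \w_2 = 0$, so $\n$ is orthogonal to every vector of $\spann{\w_1}{\w_2}$. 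As $\x_u, \x_v \in \spann{\w_1}{\w_2}$ by the definition of a tangent moving base, $\n$ is a unit normal field along $\x$ on $V_p$, and hence $\x$ is a frontal.

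For the forward implication, I would fix $\p \in U$ together with a unit normal field $\num$ on a neighborhood $V_p$, guaranteed by the frontal hypothesis. Next I would choose a constant vector $\mathbf{a}$ (one of the standard basis vectors of $\R^3$ suffices) not parallel to $\num(\p)$; by continuity of $\num$, after shrinking $V_p$ if necessary, $\mathbf{a}$ stays independent from $\num$ on all of $V_p$. I then set $\w_1 := \mathbf{a} \times \num$ and $\w_2 := \num \times \w_1$. Both are smooth; $\w_1 \neq 0$ precisely because $\mathbf{a}$ and $\num$ are independent, and $\w_2 \neq 0$ because $\w_1 \perp \num$ with $\w_1 \neq 0$. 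Since $\w_1, \w_2$ are mutually orthogonal and both orthogonal to $\num$, they form a smooth basis of the plane $\num^\perp$ throughout $V_p$, so $\Omegam = \begin{pmatrix}\w_1 & \w_2\end{pmatrix}$ is a moving base; and because $\x_u, \x_v$ are orthogonal to $\num$, they lie in $\num^\perp = \spann{\w_1}{\w_2}$, making $\Omegam$ a tangent moving base.

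The only genuinely delicate point is the smoothness and non-degeneracy in the forward direction: one must ensure the auxiliary construction produces two everywhere-independent smooth fields on a full neighborhood rather than merely at the center point. This is handled by the continuity argument keeping $\mathbf{a}$ transverse to $\num$ after shrinking $V_p$; the remaining verifications are direct consequences of the standard identities for the cross product.
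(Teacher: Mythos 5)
Your proof is correct and follows essentially the same route as the paper: the converse direction is identical (normalize $\w_1\times\w_2$), and in the forward direction the paper likewise shrinks $V_p$ by continuity of $\num$ and writes down two explicit smooth independent fields spanning $\num^\perp$ (taking $\w_1=(\nu_2,-\nu_1,0)$, $\w_2=(\nu_3,0,-\nu_1)$ after arranging $\nu_1\neq0$, which amounts to crossing $\num$ with constant vectors just as you do). Your variant via $\w_1=\mathbf{a}\times\num$, $\w_2=\num\times\w_1$ has the small cosmetic advantage of producing an orthogonal pair, but the argument is the same.
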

\begin{proof}
	If $\x$ is a frontal, then for all $\p \in U$ there exists a unitary vector field $\num: V_p \to \R^3$ with $\x_u\cdot\n=0$, $\x_v\cdot\n=0$, $V_p$ a neighborhood of $\p$ which we can reduce in order to get $\nu_i\neq0$ on $V_p$ for any $i \in \{1,2,3\}$. Without loss of generality let us suppose that $\nu_1\neq0$ and define $\Omegam:=\begin{pmatrix}\w_1&\w_2\end{pmatrix}$ with $\w_1=(\nu_2,-\nu_1,0)$ and $\w_2=(\nu_3,0,-\nu_1)$. Since $\w_1$ and $\w_2$ are linearly independent, orthogonal to $\num$ and $dim(\num^\bot)=2$ ($\num^\bot$ orthogonal space to $\num$), we have that $\spann{\w_1}{\w_2}=\num^\bot$. Therefore, $\Omegam:V_p \to \mathcal{M}_{3\times2}(\R)$ is a tangent moving base of $\x$. The converse, just define $\num:=\frac{\w_1\times\w_2}{\|\w_1\times\w_2\|}$ taking $\w_1$ and $\w_2$ the columns from a tangent moving base $\Omegam:V_p \to \mathcal{M}_{3\times2}(\R)$. Then, $\num$ is orthogonal to $\x_u$ and $\x_v$ which belong to $\spann{\w_1}{\w_2}$.   
\end{proof}

\begin{prop}\label{OD}
	Let $\x:U \to \R^3$ be a smooth map with $U\subset \R^2$ an open set.Then, $\x$ is a frontal if and only if, for all $\p \in U$ there are smooth maps $\Omegam:V_p \to \mathcal{M}_{3\times2}(\R)$ and $\Lambdam:V_p \to \mathcal{M}_{2\times2}(\R)$ with $rank(\Omegam)=2$, $V_p \subset U$ a neighbourhood of $\p$, such that $D\x(\mathbf{q})=\Omegam\Lambdam^T$ for all $\mathbf{q} \in V_p$.   
\end{prop}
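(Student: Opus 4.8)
The plan is to reduce everything to Proposition \ref{TMB}, which already characterizes frontals by the local existence of a tangent moving base. The bridge is the observation that writing $D\x = \Omegam\Lambdam^T$ is precisely an algebraic reformulation of the condition $\x_u,\x_v \in \spann{\w_1}{\w_2}$, where $\w_1,\w_2$ are the columns of $\Omegam$.

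For the forward implication I would start from a frontal $\x$ and invoke Proposition \ref{TMB} to obtain, on a neighborhood $V_p$ of any $\p$, a tangent moving base $\Omegam = \begin{pmatrix}\w_1&\w_2\end{pmatrix}$. By definition $\x_u$ and $\x_v$ lie in $\spann{\w_1}{\w_2}$, so at each point there exist coefficients $\lambda_{ij}$ with $\x_u = \lambda_{11}\w_1 + \lambda_{12}\w_2$ and $\x_v = \lambda_{21}\w_1 + \lambda_{22}\w_2$; collecting these as the columns of $D\x$ gives $D\x = \Omegam\Lambdam^T$ with $\Lambdam = (\lambda_{ij})$.

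The one point that needs care is the smoothness of $\Lambdam$, since the $\lambda_{ij}$ are a priori defined only pointwise. I would resolve this exactly as in the earlier computation yielding (\ref{WO}): because $\Omegam$ has rank $2$, the matrix $\I_\Omega = \Omegam^T\Omegam$ is invertible (indeed $\det \I_\Omega > 0$), and multiplying $D\x = \Omegam\Lambdam^T$ on the left by $\Omegam^T$ gives $\Omegam^T D\x = \I_\Omega\Lambdam^T$, hence $\Lambdam^T = \I_\Omega^{-1}\Omegam^T D\x$. The right-hand side is a product of smooth matrix-valued maps, so $\Lambdam$ is smooth, and we obtain the required decomposition with $rank(\Omegam) = 2$.

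For the converse I would assume the decomposition $D\x = \Omegam\Lambdam^T$ with $rank(\Omegam) = 2$ on $V_p$ and read it backwards: the columns $\x_u,\x_v$ of $D\x$ are linear combinations of the columns $\w_1,\w_2$ of $\Omegam$, so $\x_u,\x_v \in \spann{\w_1}{\w_2}$ and $\Omegam$ is a tangent moving base. Proposition \ref{TMB} then gives that $\x$ is a frontal. I do not expect a genuine obstacle in this argument; the only substantive step is the smoothness reasoning above, which hinges entirely on the invertibility of $\I_\Omega$ guaranteed by $rank(\Omegam)=2$.
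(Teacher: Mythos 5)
Your proposal is correct and follows essentially the same route as the paper: invoke Proposition \ref{TMB} to get a tangent moving base, define $\Lambdam$ pointwise from the linear combinations, establish smoothness via $\Lambdam^T=\I_\Omega^{-1}\Omegam^T D\x$ using the invertibility of $\I_\Omega$, and reverse the reading of the decomposition for the converse. The one point you flag as needing care (smoothness of $\Lambdam$) is exactly the point the paper's proof handles, and by the same argument.
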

\begin{proof}
If $\x$ is a frontal, by proposition \ref{TMB} for all $\p \in U$ there is a tangent moving base $\Omegam:V_p \to \mathcal{M}_{3\times2}(\R)$ of $\x$ with $V_p \subset U$ a neighborhood of $\p$. Thus, there are coefficients $\lambda_{ij}$ such that $\x_u=\lambda_{11}\w_1+\lambda_{12}\w_2$ and $\x_v=\lambda_{21}\w_1+\lambda_{22}\w_2$. Therefore, $D\x(\mathbf{q})=\Omegam\Lambdam^T$ for all $\mathbf{q} \in V_p$ where $\Lambdam=(\lambda_{ij})$. Multiplying the equality by $\Omegam^T$ and as $\I_\Omega$ is invertible, we have that $\I^{-1}_\Omega\Omegam^T D\x(\mathbf{q})=\Lambdam^T$. Then, $\Lambdam:V_p \to \mathcal{M}_{2\times2}(\R)$ is smooth. Reciprocally, if we have $D\x(\mathbf{q})=\Omegam\Lambdam^T$ for all $\mathbf{q} \in V_p$, then $\x_u=\lambda_{11}\w_1+\lambda_{12}\w_2$ and $\x_v=\lambda_{21}\w_1+\lambda_{22}\w_2$. Hence $\x_u,\x_v \in \spann{\w_1}{\w_2}$ and as $Rank(\Omegam)=2$, $\Omegam$ is a tangent moving base of $\x$. By proposition \ref{TMB} $\x$ is a frontal.       
\end{proof}

\begin{rem}
In the proof of proposition (\ref{OD}), observe that $\Lambdam=D\x^T\Omegam(\I_\Omega^T)^{-1}$, then $\Lambdam$ is determined by a local tangent moving base of $\x$. Also having a decomposition $D\x=\Omegam\Lambdam^T$ with $rank(\Omegam)=2$ implies that $\Omegam$ is a tangent moving base of $\x$. 
\end{rem}
From now on, as we want to describe local properties and tangent moving bases exist locally, we can suppose that we have a global tangent moving base for a frontal restringing the domain if necessary. If $\x$ is a frontal and $\Omegam$ a tangent moving base of $\x$, we denote $\Lambdam:=D\x^T\Omegam(\I_\Omega)^{-1}$, $\laO:=det(\Lambdam)$ and $\mathfrak{T}_\Omega$ as the principal ideal generated by $\laO$ in the ring $C^\infty(U,\R)$. Thus, we have globally $D\x=\Omegam\Lambdam^T$, $\Sigma(\x)=\laO^{-1}(0)$ and $rank(D\x)=rank(\Lambdam)$. Also, with a tangent moving base $\Omegam=\begin{pmatrix}\w_1&\w_2\end{pmatrix}$ given, we always choose as unit normal vector field $\num:U \to \R^3$ along $\x$, the induced by $\Omegam$ (i.e $\n=\frac{\w_1\times\w_2}{\|\w_1\times\w_2\|}$). 

\begin{definition}
	Let $\x:U \to \R^3$ be a frontal, $\Omegam=\begin{pmatrix}\w_1&\w_2\end{pmatrix}$ and $\bar{\Omegam}=\begin{pmatrix}\bar{\w}_1&\bar{\w}_2\end{pmatrix}$ tangent moving bases of $\x$. We say that $\Omegam$ and $\bar{\Omegam}$ are {\it compatibles} if $\w_{1}\times\w_{2}\cdot\bar{\w}_{1}\times\bar{\w}_{2}>0$. Also, $\Omegam$ is {\it orthonormal} tangent moving base if $|\w_{1}|=|\w_{2}|=1$ and $\w_1\cdot \w_2=0$.  	
\end{definition}
 
\begin{teo}\label{D}
	Let $\x:U \to \R^3$ be a frontal and $\Omegam$ a tangent moving base of $\x$, then the matrices defined by equations \ref{FF1} and \ref{FF2} have the following decomposition:
	\begin{subequations}
		\label{FFSp}
		\begin{align}
		\begin{pmatrix}
		E&F\\
		F&G
		\end{pmatrix}&=\begin{pmatrix}
		\la{1}{1} & \la{1}{2}\\
		\la{2}{1} & \la{2}{2}
		\end{pmatrix}\begin{pmatrix}
		\EO&\FO\\
		\FO&\GO
		\end{pmatrix}\begin{pmatrix}
		\la{1}{1} & \la{1}{2}\\
		\la{2}{1} & \la{2}{2}
		\end{pmatrix}^T\label{FFS1p}
		\\ 
		\begin{pmatrix}
		e&f\\
		f&g
		\end{pmatrix}&=\begin{pmatrix}
		\la{1}{1} & \la{1}{2}\\
		\la{2}{1} & \la{2}{2}
		\end{pmatrix}\begin{pmatrix}
		\eo&\fuo\\
		\fdo&\go
		\end{pmatrix}\label{FFS2p},
		\end{align}
	\end{subequations}	
	in which all the components are smooth real functions defined on $U$, $\EO>0$, $\GO>0$, $\EO\GO-\FO^2>0$, $rank(D\x)=rank(\Lambdam)$, $\Sigma(\x)=\laO^{-1}(0)$ and
	\begin{subequations}
	\begin{align}
	\Lambdam_{(1)u}^{\phantomsection}\I_\Omega\Lambdam_{(2)}^T-\Lambdam_{(1)}^{\phantomsection}\I_\Omega\Lambdam_{(2)u}^T+E_v-F_u \in \mathfrak{T}_\Omega\label{li1}\\
	\Lambdam_{(1)v}^{\phantomsection}\I_\Omega\Lambdam_{(2)}^T-\Lambdam_{(1)}^{\phantomsection}\I_\Omega\Lambdam_{(2)v}^T+F_v-G_u \in \mathfrak{T}_\Omega\label{li2}
	\end{align}
	\end{subequations}
	where $\Lambdam=(\lambda_{ij})$. 
\end{teo}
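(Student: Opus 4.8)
The plan is to establish the two matrix decompositions directly from the factorization $D\x=\Omegam\Lambdam^T$ and then to derive the membership relations \eqref{li1}--\eqref{li2} by comparing mixed partial derivatives. For the first fundamental form, I would substitute $D\x=\Omegam\Lambdam^T$ into $\I=D\x^TD\x$ from \eqref{IX}, obtaining $\I=\Lambdam\Omegam^T\Omegam\Lambdam^T=\Lambdam\I_\Omega\Lambdam^T$, which is exactly \eqref{FFS1p}. For the second fundamental form, I would use $\II=-D\x^TD\num$ together with the identity $D\n=\Omegam\mum^T$ from \eqref{WO}; since the normal induced by $\Omegam$ equals $\num$ (by our fixed choice), this gives $\II=-\Lambdam\Omegam^T D\n=-\Lambdam\Omegam^T\Omegam\mum^T=-\Lambdam\I_\Omega\mum^T$. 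Recalling from \eqref{W} that $\mum=-\II_\Omega^T\I_\Omega^{-1}$, I get $\II=\Lambdam\I_\Omega\I_\Omega^{-1}\II_\Omega=\Lambdam\II_\Omega$, which is \eqref{FFS2p}. The smoothness of all entries is immediate: $\Lambdam=D\x^T\Omegam\I_\Omega^{-1}$ is smooth because $\I_\Omega=\Omegam^T\Omegam$ is invertible ($\det\I_\Omega>0$, already noted in the excerpt), and $\II_\Omega=-\Omegam^T D\n$ is smooth as well.

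The inequalities $\EO>0$, $\GO>0$, $\EO\GO-\FO^2>0$ follow from the positive-definiteness of the inner product restricted to $\spann{\w_1}{\w_2}$, as observed just before the Frobenius theorem in the excerpt, since $\I_\Omega=\Omegam^T\Omegam$ is the Gram matrix of the linearly independent vectors $\w_1,\w_2$. The rank identity $rank(D\x)=rank(\Lambdam)$ and the singular-set description $\Sigma(\x)=\laO^{-1}(0)$ were already recorded in the discussion preceding the Definition of compatibility; since $\Omegam$ has rank $2$, multiplication by $\Omegam$ preserves the rank of $\Lambdam^T$, so $D\x$ and $\Lambdam$ have the same rank, and $\x$ fails to be immersive exactly where $\det\Lambdam=\laO$ vanishes.

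The genuinely new content is the pair of relations \eqref{li1}--\eqref{li2}, and this is where I expect the main work. The idea is that $E,F,G$ are built from inner products of $\x_u,\x_v$, and the combinations $E_v-F_u$ and $F_v-G_u$ encode the obstruction to a certain symmetry of third derivatives. Writing $\x_u=\Omegam\Lambdam_{(1)}^T$ and $\x_v=\Omegam\Lambdam_{(2)}^T$ (the rows of $\Lambdam$ give the coefficients), I would differentiate the scalar products $E=\x_u\cdot\x_u$, $F=\x_u\cdot\x_v$, $G=\x_v\cdot\x_v$ and expand everything in terms of $\Lambdam$, $\Lambdam_u$, $\Lambdam_v$, $\Omegam$, and $\Omegam_u,\Omegam_v$. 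The crucial point is that $\x_{uv}=\x_{vu}$ forces the second-derivative terms to cancel, so that $E_v-F_u$ and $F_v-G_u$ reduce to expressions of the form $\Lambdam_{(1)u}\I_\Omega\Lambdam_{(2)}^T-\Lambdam_{(1)}\I_\Omega\Lambdam_{(2)u}^T$ plus leftover terms that are themselves multiples of off-diagonal data; I would collect precisely those leftover terms and show that every one of them carries a factor of $\det\Lambdam=\laO$, hence lies in the principal ideal $\mathfrak{T}_\Omega$. The hard part will be the careful bookkeeping: isolating, among the many terms produced by the product rule, exactly the combination displayed on the left of \eqref{li1}--\eqref{li2} and verifying that the entire remainder is divisible by $\laO$ rather than merely vanishing on $\Sigma(\x)$. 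I would handle this by working with the $2\times 2$ matrix identities (using that $\Lambdam^{-1}=\laO^{-1}\,\mathrm{adj}(\Lambdam)$ off the singular set and then invoking density of $\Sigma(\x)^c$ together with smoothness to extend the ideal-membership to all of $U$) rather than manipulating scalar components one at a time.
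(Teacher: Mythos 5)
Your handling of \eqref{FFS1p}, \eqref{FFS2p}, the positivity of $\I_\Omega$, the rank identity and $\Sigma(\x)=\laO^{-1}(0)$ is correct and matches the paper. The gap is in your plan for \eqref{li1}--\eqref{li2}, which is the only nontrivial part of the statement. You propose to establish divisibility of the remainder by $\laO$ by writing $\Lambdam^{-1}=\laO^{-1}\mathrm{adj}(\Lambdam)$ off the singular set and then invoking density of $\Sigma(\x)^c$. This fails for two reasons. First, Theorem \ref{D} is stated for an arbitrary frontal, not a proper one, so $\Sigma(\x)^c$ need not be dense (it can even be empty). Second, even when it is dense, knowing that a smooth function $r$ equals $h\laO$ on $\Sigma(\x)^c$ with $h$ smooth only there does not place $r$ in the principal ideal $\mathfrak{T}_\Omega$ of $C^\infty(U,\R)$: one must exhibit a smooth cofactor on all of $U$ (for instance $r=u$ vanishes on the zero set of $\laO=u^2$, yet $u/u^2$ does not extend smoothly). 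You correctly flag that ideal membership is stronger than vanishing on $\Sigma(\x)$, but the method you then propose only delivers the weaker statement.

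The mechanism that actually produces the smooth cofactor, and that the paper uses, is a global algebraic identity. From \eqref{GA1} and \eqref{T1X} one has $D\x_u^TD\x-\tfrac12\I_u=\tfrac12\mathbf{A}_1$, where $\mathbf{A}_1$ is the skew-symmetric matrix with off-diagonal entries $\pm(E_v-F_u)$; substituting $D\x=\Omegam\Lambdam^T$ and $\I=\Lambdam\I_\Omega\Lambdam^T$ and cancelling, everything collapses to
$$\Lambdam\mathbf{B}_1\Lambdam^T=\Lambdam\I_\Omega\Lambdam_u^T-\Lambdam_u\I_\Omega\Lambdam^T+\mathbf{A}_1,\qquad \mathbf{B}_1:=\Omegam_u^T\Omegam-\Omegam^T\Omegam_u,$$
with $\mathbf{B}_1$ skew-symmetric and smooth on all of $U$. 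Conjugating a $2\times2$ skew-symmetric matrix by $\Lambdam$ multiplies its off-diagonal entry by $\det\Lambdam$, so reading off the $(1,2)$-entry gives $\Lambdam_{(1)u}\I_\Omega\Lambdam_{(2)}^T-\Lambdam_{(1)}\I_\Omega\Lambdam_{(2)u}^T+E_v-F_u=\tau_1\laO$ with $\tau_1$ the globally smooth off-diagonal entry of $\mathbf{B}_1$; this is ideal membership with an explicit cofactor, needing no density and no inversion of $\Lambdam$. The same computation with $v$ in place of $u$ gives \eqref{li2}. Your bookkeeping strategy can be completed, but only by identifying this skew-symmetric conjugation as the source of the factor $\laO$; as written, your route to divisibility does not close.
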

\begin{proof}
	We have $D\x=\Omegam\Lambdam^T$, then $\I=D\x^T D\x=\Lambdam\Omegam^T\Omegam\Lambdam^T=\Lambdam\I_\Omega\Lambdam^T$. Also, $\II=-D\x^T D\n=\Lambdam(-\Omegam^TD\n)=\Lambdam\II_\Omega$. Now, let us set the skew-symmetric matrices:
	$$
	\mathbf{A}_1:=\begin{pmatrix}
	0&-(E_v-F_u)\\
	E_v-F_u&0
	\end{pmatrix}, \ \mathbf{B}_1:=\begin{pmatrix}
	0&-\tau_1\\
	\tau_1&0
	\end{pmatrix}:=\Omegam_u^T\Omegam-\Omegam^T\Omegam_u.$$
	From (\ref{GA1}) and (\ref{T1X}) we have $D\x_u^TD\x-\frac{1}{2}\I_u=\frac{1}{2}\mathbf{A}_1$, then using that $\I=\Lambdam\I_\Omega\Lambdam^T$, $D\x=\Omegam\Lambdam^T$ and developing derivatives, 
	$$(\Lambdam\Omegam_u^T+\Lambdam_u\Omegam^T)\Omegam\Lambdam^T=\frac{1}{2}(\Lambdam_u\I_\Omega\Lambdam^T+\Lambdam\I_{\Omega u}\Lambdam^T+\Lambdam \I_\Omega\Lambdam_u^T)+\frac{1}{2}\mathbf{A}_1.$$
	Substituting $\I_\Omega=\Omegam^T\Omegam$ and $\I_{\Omega u}=\Omegam_u^T\Omegam+\Omegam^T\Omegam_u$, we can group and cancel similar terms, getting
	$$\Lambdam\mathbf{B}_1\Lambdam^T=\Lambdam\I_\Omega\Lambdam_u^T-\Lambdam_u\I_\Omega\Lambdam^T+\mathbf{A}_1.$$
	multiplying the equality by left side with $\begin{pmatrix}
	1&0\end{pmatrix}$ and by the right side with $\begin{pmatrix}
	0&1\end{pmatrix}^T$, we obtain,
	$$-\tau_1\laO=\Lambdam_{(1)}^{\phantomsection}\begin{pmatrix}
	0&-\tau_1\\
	\tau_1&0
	\end{pmatrix}\Lambdam_{(2)}^T=\Lambdam_{(1)}^{\phantomsection}\I_\Omega\Lambdam_{(2)u}^T-\Lambdam_{(1)u}^{\phantomsection}\I_\Omega\Lambdam_{(2)}^T-(E_v-F_u)$$ and from it follows (\ref{li1}). Setting the matrices:
	$$
	\mathbf{A}_2:=\begin{pmatrix}
	0&-(F_v-G_u)\\
	F_v-G_u&0
	\end{pmatrix}, \ \mathbf{B}_2=\begin{pmatrix}
	0&-\tau_2\\
	\tau_2&0
	\end{pmatrix}:=\Omegam_v^T\Omegam-\Omegam^T\Omegam_v$$ Observing that, $D\x_v^TD\x-\frac{1}{2}\I_v=\frac{1}{2}\mathbf{A}_2$ and proceeding similarly as before, it follows (\ref{li2}). 
	
\end{proof}
The conditions (\ref{li1}) and (\ref{li2}) in theorem \ref{D} may seem kind of strange, but we will see in proposition \ref{metric} why these are so important. Also these expressions can be reduced depending on the type of $\Omegam$ chosen. If we have a tangent moving base of a frontal, we always can construct an orthonormal one applying Gram-Schmidt orthonormalization, then the decompositions in theorem \ref{D} are reduced and follows easily the corollary:

\begin{coro}\label{Dc}
	Let $\x:U \to \R^3$ be a frontal and $\Omegam$ a orthonormal tangent moving base of $\x$, then the matrices defined by equations \ref{FF1} and \ref{FF2} have the following decomposition:
	\begin{subequations}
		\label{FFSpc}
		\begin{align}
		\begin{pmatrix}
		E&F\\
		F&G
		\end{pmatrix}&=\begin{pmatrix}
		\la{1}{1} & \la{1}{2}\\
		\la{2}{1} & \la{2}{2}
		\end{pmatrix}\begin{pmatrix}
		\la{1}{1} & \la{1}{2}\\
		\la{2}{1} & \la{2}{2}
		\end{pmatrix}^T\label{FFS1pc}
		\\ 
		\begin{pmatrix}
		e&f\\
		f&g
		\end{pmatrix}&=\begin{pmatrix}
		\la{1}{1} & \la{1}{2}\\
		\la{2}{1} & \la{2}{2}
		\end{pmatrix}\begin{pmatrix}
		\eo&\fuo\\
		\fdo&\go
		\end{pmatrix}\label{FFS2pc},
		\end{align}
	\end{subequations}	
	in which all the components are smooth real functions defined on $U$, $rank(D\x)=rank(\Lambdam)$, $\Sigma(\x)=\laO^{-1}(0)$ and
	\begin{subequations}
		\begin{align}
		(\Lambdam_{(1)}^{\phantomsection}\Lambdam_{(1)}^T)_v-2\Lambdam_{(1)}^{\phantomsection}\Lambdam_{(2)u}^T \in \mathfrak{T}_\Omega\label{li1c}\\
		2\Lambdam_{(1)v}^{\phantomsection}\Lambdam_{(2)}^T-(\Lambdam_{(2)}^{\phantomsection}\Lambdam_{(2)}^T)_u \in \mathfrak{T}_\Omega\label{li2c}
		\end{align}
	\end{subequations}
	where $\Lambdam=(\lambda_{ij})$. 
\end{coro}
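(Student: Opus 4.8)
The plan is to obtain Corollary \ref{Dc} as a direct specialization of Theorem \ref{D} to the orthonormal case, in which $\I_\Omega$ degenerates to the identity. First I would note that for an orthonormal tangent moving base $\Omegam=\begin{pmatrix}\w_1 & \w_2\end{pmatrix}$ we have $\I_\Omega=\Omegam^T\Omegam=\id$, so $\EO=\GO=1$ and $\FO=0$; feeding this into the decomposition (\ref{FFS1p}) collapses the middle factor and yields (\ref{FFS1pc}), while (\ref{FFS2p}) does not involve $\I_\Omega$ and is therefore reproduced verbatim as (\ref{FFS2pc}). The positivity requirements $\EO>0$, $\GO>0$, $\EO\GO-\FO^2>0$ become trivially $1>0$, and the two facts $rank(D\x)=rank(\Lambdam)$ and $\Sigma(\x)=\laO^{-1}(0)$ carry over unchanged from Theorem \ref{D}.

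The substantive step is to rewrite the integrability conditions (\ref{li1}) and (\ref{li2}) under $\I_\Omega=\id$. From $\I=\Lambdam\Lambdam^T$ one reads off the scalars $E=\Lambdam_{(1)}\Lambdam_{(1)}^T$, $F=\Lambdam_{(1)}\Lambdam_{(2)}^T$ and $G=\Lambdam_{(2)}\Lambdam_{(2)}^T$. Differentiating these and using that a scalar equals its own transpose gives $E_v=2\Lambdam_{(1)v}\Lambdam_{(1)}^T$, $F_u=\Lambdam_{(1)u}\Lambdam_{(2)}^T+\Lambdam_{(1)}\Lambdam_{(2)u}^T$, $F_v=\Lambdam_{(1)v}\Lambdam_{(2)}^T+\Lambdam_{(1)}\Lambdam_{(2)v}^T$ and $G_u=2\Lambdam_{(2)u}\Lambdam_{(2)}^T$. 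Substituting $E_v-F_u$ into (\ref{li1}) with $\I_\Omega$ replaced by $\id$, the two copies of $\Lambdam_{(1)u}\Lambdam_{(2)}^T$ cancel and what remains is $2\Lambdam_{(1)v}\Lambdam_{(1)}^T-2\Lambdam_{(1)}\Lambdam_{(2)u}^T=(\Lambdam_{(1)}\Lambdam_{(1)}^T)_v-2\Lambdam_{(1)}\Lambdam_{(2)u}^T$, which is the left-hand side of (\ref{li1c}). The identical maneuver in (\ref{li2}), where now the two copies of $\Lambdam_{(1)}\Lambdam_{(2)v}^T$ cancel, leaves $2\Lambdam_{(1)v}\Lambdam_{(2)}^T-(\Lambdam_{(2)}\Lambdam_{(2)}^T)_u$, i.e. (\ref{li2c}). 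Since membership in the ideal $\mathfrak{T}_\Omega$ is preserved by these rewritings, conditions (\ref{li1}) and (\ref{li2}) become exactly (\ref{li1c}) and (\ref{li2c}).

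Finally, to see that the hypothesis of the corollary is not vacuous, I would recall that an orthonormal tangent moving base always exists locally: starting from any tangent moving base (available by Proposition \ref{TMB}), Gram-Schmidt orthonormalization of its columns produces unit orthogonal vector fields spanning the same plane, hence still a tangent moving base of $\x$. I do not expect any genuine obstacle in this corollary; it is a substitution of $\I_\Omega=\id$ into Theorem \ref{D} followed by elementary cancellation. The only point demanding a little care is the bookkeeping when differentiating $E$, $F$, $G$, namely keeping track of which products of rows of $\Lambdam$ are scalars so that symmetrization (a term equalling its transpose) may be applied correctly.
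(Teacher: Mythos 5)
Your proposal is correct and follows exactly the route the paper intends: the paper derives this corollary by remarking that an orthonormal tangent moving base always exists via Gram--Schmidt and that the decompositions of Theorem \ref{D} ``are reduced'' when $\I_\Omega=\id_2$, which is precisely your substitution $\EO=\GO=1$, $\FO=0$ followed by the cancellation turning (\ref{li1}), (\ref{li2}) into (\ref{li1c}), (\ref{li2c}). Your explicit bookkeeping of $E_v$, $F_u$, $F_v$, $G_u$ in terms of the rows of $\Lambdam$ simply fills in the computation the paper leaves to the reader, and it checks out.
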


\begin{rem}\label{specialbase}
If $\x:U \to \R^3$ is a frontal and $\Omegam$ a tangent moving base of $\x$, we can find a tangent moving base $\hat{\Omegam}$ having one of the following forms:
$$
\begin{pmatrix}
1&0\\
0&1\\
g_1&g_2
\end{pmatrix}, \ \begin{pmatrix}
1&0\\
g_1&g_2\\
0&1
\end{pmatrix}, \ \begin{pmatrix}
0&1\\
1&0\\
g_1&g_2
\end{pmatrix},
$$
$$
\begin{pmatrix}
g_1&g_2\\
1&0\\
0&1
\end{pmatrix}, \ \begin{pmatrix}
g_1&g_2\\
0&1\\
1&0
\end{pmatrix}, \ \begin{pmatrix}
0&1\\
g_1&g_2\\
1&0
\end{pmatrix},
$$
\\
with $g_1,g_2: U \to \R$ smooth functions and the matrix $\hat{\Lambdam}^T$ as an {\it exact differential}, it means, there is a smooth map $(a,b):U \to \R^2$ such that $D(a,b)=\hat{\Lambdam}^T$. To see it, as the columns of $\Omegam$ are linearly independent, then applying reduction of Gauss-Jordan with a finite number of operations by columns, it can be reduced to one of the forms above. Without loss of generality, let us suppose it is reduced to the first one. If we denote $\mathbf{E}_1,\mathbf{E}_2,..,\mathbf{E}_m$ the elementary matrices $2\times2$ corresponding to the operations by columns, we have:
$$D\x=\Omegam\Lambdam^T=\Omegam \mathbf{E}_1\mathbf{E}_2\cdots \mathbf{E}_m\mathbf{E}_m^{-1}\cdots \mathbf{E}_2^{-1}\mathbf{E}_1^{-1}\Lambdam^T=\begin{pmatrix}
1&0\\
0&1\\
g_1&g_2
\end{pmatrix}\mathbf{E}_m^{-1}\cdots \mathbf{E}_2^{-1}\mathbf{E}_1^{-1}\Lambdam^T$$
Denoting $\hat{\Lambdam}^T:=\mathbf{E}_m^{-1}\cdots \mathbf{E}_2^{-1}\mathbf{E}_1^{-1}\Lambdam^T$ and $\x=(a,b,c)$, we can multiply the last equality by $\begin{pmatrix}
	1&0&0\\
	0&1&0\\
\end{pmatrix}$ to get:
$$D(a,b)=\begin{pmatrix}
a_u & a_v\\
b_u & b_v
\end{pmatrix}=\begin{pmatrix}
1&0&0\\
0&1&0\\
\end{pmatrix}D\x=\begin{pmatrix}
1&0&0\\
0&1&0\\
\end{pmatrix}\begin{pmatrix}
1&0\\
0&1\\
g_1&g_2
\end{pmatrix}\hat{\Lambdam}^T=\id_2\hat{\Lambdam}^T=\hat{\Lambdam}^T.$$ On the other hand, a simple computation leads to
\begin{align}
\I_{\hat{\Omega}}=\begin{pmatrix}
1+g_1^2 & g_1g_2\\
g_1g_2  & 1+g_2^2
\end{pmatrix},\II_{\hat{\Omega}}=\begin{pmatrix}
g_{1u} & g_{1v}\\
g_{2u} & g_{2v}		\end{pmatrix}(1+g_1^2+g_2^2)^{-\frac{1}{2}},\nonumber
\end{align}
and since $D\n=\hat{\Omegam}\mum^T$ with $\n=(-g_1,-g_2,1)det(\I_\Omega)^{-\frac{1}{2}}$, reasoning as before we get that $D(-g_1det(\I_\Omega)^{-\frac{1}{2}},-g_2det(\I_\Omega)^{-\frac{1}{2}})=\mum^T$.
\end{rem}
By this fact and theorem \ref{D}, follows the result:  
\begin{coro}\label{D2c}
	Let $\x:U \to \R^3$ be a frontal and $\Omegam$ a tangent moving base of $\x$ with the form of remark \ref{specialbase}, then the matrices defined by equations \ref{FF1} and \ref{FF2} have a decomposition in this form:
	\begin{subequations}
		\label{DE}
		\begin{align}
		\begin{pmatrix}
		E&F\\
		F&G
		\end{pmatrix}&=\begin{pmatrix}
		a_u & b_u\\
		a_v & b_v
		\end{pmatrix}\begin{pmatrix}
		1+g_1^2 & g_1g_2\\
		g_1g_2  & 1+g_2^2
		\end{pmatrix}\begin{pmatrix}
		a_u & b_u\\
		a_v & b_v
		\end{pmatrix}^T\label{DE1}
		\\ 
		\begin{pmatrix}
		e&f\\
		f&g
		\end{pmatrix}&=\begin{pmatrix}
		a_u & b_u\\
		a_v & b_v
		\end{pmatrix}\begin{pmatrix}
		g_{1u} & g_{1v}\\
		g_{2u} & g_{2v}		\end{pmatrix}(1+g_1^2+g_2^2)^{-\frac{1}{2}}
		\label{DE2}
		\end{align}
	\end{subequations}
	in which $g_1$, $g_2$, $a$ and $b$ are smooth real functions defined in $U$. In particular, (\ref{DE2}) implies $(a,b)_u\cdot(g_1,g_2)_v=(a,b)_v\cdot(g_1,g_2)_u$. 
	
\end{coro}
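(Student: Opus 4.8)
The plan is to feed the explicit data assembled in Remark \ref{specialbase} into the two decompositions provided by Theorem \ref{D}. For a tangent moving base $\Omegam$ of the prescribed form, Remark \ref{specialbase} shows that $\Lambdam^T$ is an exact differential, namely $\Lambdam^T=D(a,b)$, so that $\Lambdam=\begin{pmatrix} a_u & b_u \\ a_v & b_v \end{pmatrix}$; the same remark records the explicit matrices $\I_\Omega=\begin{pmatrix} 1+g_1^2 & g_1g_2 \\ g_1g_2 & 1+g_2^2 \end{pmatrix}$ and $\II_\Omega=\begin{pmatrix} g_{1u} & g_{1v} \\ g_{2u} & g_{2v} \end{pmatrix}(1+g_1^2+g_2^2)^{-\frac{1}{2}}$.

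Next I would simply invoke the identities $\I=\Lambdam\I_\Omega\Lambdam^T$ and $\II=\Lambdam\II_\Omega$ established in the proof of Theorem \ref{D}. Substituting the three matrices above into these two products yields \eqref{DE1} and \eqref{DE2} verbatim; this step is pure substitution and carries no computational content beyond recognizing that $\begin{pmatrix} a_u & b_u \\ a_v & b_v \end{pmatrix}^T=D(a,b)=\Lambdam^T$.

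For the final assertion I would use the built-in symmetry of the second fundamental form. By its definition in \eqref{FF2} the matrix $\II=\begin{pmatrix} e&f\\ f&g \end{pmatrix}$ is symmetric, since $\x_{uv}=\x_{vu}$ forces $\num\cdot\x_{uv}=\num\cdot\x_{vu}$. Consequently the two off-diagonal entries of the product in \eqref{DE2} must agree; equating them and cancelling the common nonzero factor $(1+g_1^2+g_2^2)^{-\frac{1}{2}}$ produces $a_ug_{1v}+b_ug_{2v}=a_vg_{1u}+b_vg_{2u}$, which is precisely $(a,b)_u\cdot(g_1,g_2)_v=(a,b)_v\cdot(g_1,g_2)_u$. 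There is essentially no obstacle in this corollary: all the substance lives in Theorem \ref{D} and Remark \ref{specialbase}, and the only point requiring a moment's care is realizing that the extra scalar relation is not an independent hypothesis but a forced consequence of the symmetry of $\II$ that the decomposition \eqref{DE2} must respect.
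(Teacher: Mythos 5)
Your proposal is correct and is exactly the argument the paper intends: the paper gives no separate proof of this corollary, stating only that it follows from Remark \ref{specialbase} together with Theorem \ref{D}, which is precisely the substitution $\Lambdam^T=D(a,b)$, $\I=\Lambdam\I_\Omega\Lambdam^T$, $\II=\Lambdam\II_\Omega$ that you carry out. Your derivation of the scalar relation from the symmetry of $\II$ is also the intended reading of ``(\ref{DE2}) implies $(a,b)_u\cdot(g_1,g_2)_v=(a,b)_v\cdot(g_1,g_2)_u$.''
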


\begin{ex}
	The cuspidal cross-cap (see Figure \ref{ccrosscap}) can be decomposed in this way:
	\begin{figure}[h]
		\begin{center}
			\includegraphics[scale=0.22]{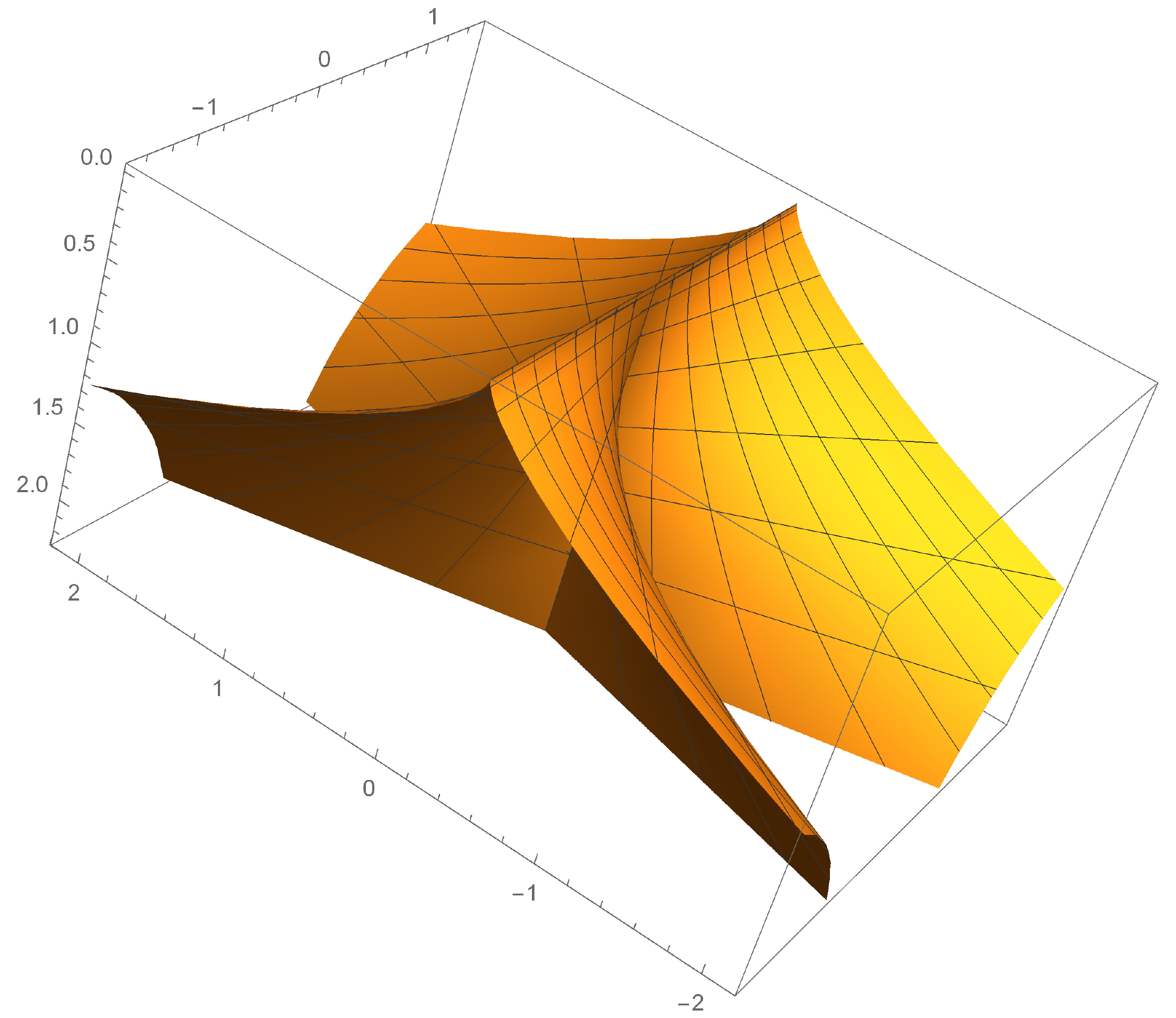}\qquad\qquad \includegraphics[scale=0.22]{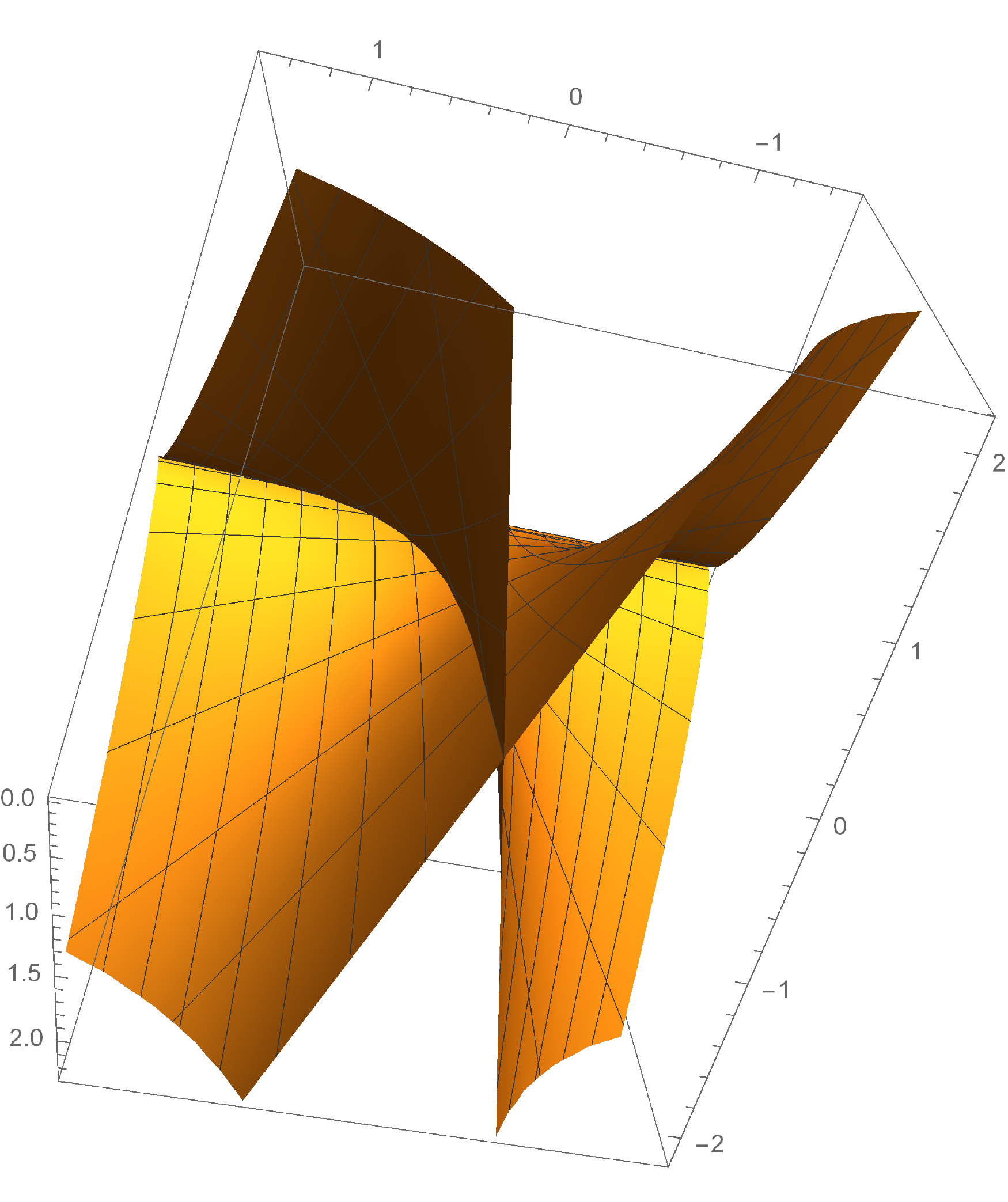}
		\end{center}
		\caption{The cuspidal cross-cap $(\x(u,v)=(u,v^2, uv^3))$, an example of a proper frontal which is not a front \cite{FSUY}.}\label{ccrosscap}
	\end{figure}
	\begin{subequations}
		\label{CCCD}
		\begin{align}
		&D\x=\begin{pmatrix}
		1&0\\
		0&1\\
		v^3&\frac{3}{2}uv
		\end{pmatrix}\begin{pmatrix}
		1&0\\
		0&2v
		\end{pmatrix}=\Omegam\Lambdam^T,\text{ where }\Omegam=\begin{pmatrix}
		1&0\\
		0&1\\
		v^3&\frac{3}{2}uv
		\end{pmatrix}, \Lambdam=\begin{pmatrix}
		1&0\\
		0&2v
		\end{pmatrix}\nonumber\\
		&\begin{pmatrix}
		E&F\\
		F&G
		\end{pmatrix}=\begin{pmatrix}
		1 & 0\\
		0 & 2v
		\end{pmatrix}\begin{pmatrix}
		1+v^6 & \frac{3}{2}uv^4\\
		\frac{3}{2}uv^4 & 1+\frac{9}{4}u^2v^2
		\end{pmatrix}\begin{pmatrix}
		1 & 0\\
		0 & 2v
		\end{pmatrix}^T\nonumber
		\\ 
		&\begin{pmatrix}
		e&f\\
		f&g
		\end{pmatrix}=\begin{pmatrix}
		1 & 0\\
		0 & 2v
		\end{pmatrix}\begin{pmatrix}
		0 & 3v^2\\
		\frac{3}{2}v & \frac{3}{2}u
		\end{pmatrix}\frac{1}{\sqrt{1+v^6+\frac{9}{4}u^2v^2}}\nonumber
		\end{align}
	\end{subequations}
\end{ex}	

\begin{teo}\label{D2}
	Let $\I:U \to \mathcal{M}_{2\times2}(\R)$ be a smooth map, with $\I$ decomposing in this form:

		$$\I=\begin{pmatrix}
		a_u & b_u\\
		a_v & b_v
		\end{pmatrix}\begin{pmatrix}
		1+g_1^2 & g_1g_2\\
		g_1g_2  & 1+g_2^2
		\end{pmatrix}\begin{pmatrix}
		a_u & b_u\\
		a_v & b_v
		\end{pmatrix}^T
		$$
in which $g_1$, $g_2$, $a$ and $b$  are smooth real functions defined in $U$, satisfying $(a,b)_u\cdot(g_1,g_2)_v=(a,b)_v\cdot(g_1,g_2)_u$. Then, for each $(u_0,v_0)\in U$ and $\p \in \R^3$, there is a frontal $\x:V \to \R^3$, $V \subset U$, $V$ a neighborhood of $(u_0,v_0)$ with first fundamental form $\I$ and second fundamental form $D(a,b)^T D(g_1,g_2)(1+g_1^2+g_2^2)^{-\frac{1}{2}}$.
	
\end{teo}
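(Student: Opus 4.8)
The plan is to realize the prescribed data as the graph-type frontal singled out in Remark \ref{specialbase}, recovering the parametrization by integrating a single scalar $1$-form. Set
$$
\hat{\Omegam}:=\begin{pmatrix}1&0\\0&1\\g_1&g_2\end{pmatrix},\qquad \hat{\Lambdam}:=\begin{pmatrix}a_u&b_u\\a_v&b_v\end{pmatrix},
$$
so that $\hat{\Lambdam}^T=D(a,b)$ and $\I_{\hat\Omega}=\hat{\Omegam}^T\hat{\Omegam}=\begin{pmatrix}1+g_1^2&g_1g_2\\g_1g_2&1+g_2^2\end{pmatrix}$. The product $\hat{\Omegam}\hat{\Lambdam}^T$ is the candidate for $D\x$: its first two rows are exactly $D(a,b)$, while its third row is the $1$-form $\omega:=(g_1a_u+g_2b_u)\,du+(g_1a_v+g_2b_v)\,dv$.

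First I would recover the map $\x=(a,b,c)$. The first two coordinates are the given $a,b$, so the only thing left to construct is the third coordinate $c$, which must satisfy $c_u=g_1a_u+g_2b_u$ and $c_v=g_1a_v+g_2b_v$, that is, $dc=\omega$. The integrability condition $\partial_v(g_1a_u+g_2b_u)=\partial_u(g_1a_v+g_2b_v)$ simplifies, after cancelling the mixed second-derivative terms $g_1a_{uv}+g_2b_{uv}$ common to both sides, to $g_{1v}a_u+g_{2v}b_u=g_{1u}a_v+g_{2u}b_v$, which is precisely the hypothesis $(a,b)_u\cdot(g_1,g_2)_v=(a,b)_v\cdot(g_1,g_2)_u$. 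Hence $\omega$ is closed, and the Poincar\'e lemma (equivalently the $n=1$ case of the Frobenius Theorem \ref{frobg}, where $\mathbf{\Theta},\mathbf{\Xi}$ do not depend on the unknown) furnishes a smooth $c$ on a neighbourhood $V\subset U$ of $(u_0,v_0)$. Replacing $a,b,c$ by suitable additive constants changes no derivative and lets me arrange $\x(u_0,v_0)=\p$.

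Next I would verify the two claimed conclusions. By construction $D\x=\hat{\Omegam}\hat{\Lambdam}^T$ with $rank(\hat{\Omegam})=2$, since its columns $(1,0,g_1)$ and $(0,1,g_2)$ are linearly independent; hence by Proposition \ref{OD} and the remark following it, $\x$ is a frontal and $\hat{\Omegam}$ is a tangent moving base of $\x$. Theorem \ref{D} then gives that the first fundamental form of $\x$ equals $\hat{\Lambdam}\I_{\hat\Omega}\hat{\Lambdam}^T$, which is exactly the prescribed decomposition of $\I$, and that its second fundamental form equals $\hat{\Lambdam}\II_{\hat\Omega}$. Using the computation recorded in Remark \ref{specialbase}, namely $\II_{\hat\Omega}=D(g_1,g_2)(1+g_1^2+g_2^2)^{-\frac12}$, together with $\hat{\Lambdam}=D(a,b)^T$, this second form equals $D(a,b)^TD(g_1,g_2)(1+g_1^2+g_2^2)^{-\frac12}$, as required.

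The only genuine content lies in the second paragraph: the single scalar compatibility hypothesis is exactly the closedness of $\omega$, which is what allows the missing third coordinate to be integrated. Everything else is a direct application of Proposition \ref{OD}, Theorem \ref{D} and the explicit formulas for $\I_{\hat\Omega}$ and $\II_{\hat\Omega}$ already established in Remark \ref{specialbase}; there is no serious obstacle beyond this bookkeeping, precisely because the decomposition hypothesis hands the tangent moving base $\hat{\Omegam}$ to us directly rather than requiring us to solve for it.
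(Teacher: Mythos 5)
Your proof is correct and follows essentially the same route as the paper's: both reduce the construction to the observation that the hypothesis $(a,b)_u\cdot(g_1,g_2)_v=(a,b)_v\cdot(g_1,g_2)_u$ is exactly the local integrability condition for $D\x=\hat{\Omegam}\hat{\Lambdam}^T$, and then conclude via Proposition \ref{OD} and the computations of $\I_{\hat{\Omega}}$, $\II_{\hat{\Omega}}$ recorded in Remark \ref{specialbase}. The only cosmetic difference is that you integrate just the third coordinate $c$ by the Poincar\'e lemma, whereas the paper applies Theorem \ref{frobg} to the full $\R^3$-valued system $\x_u=\mathbf{z}_1$, $\x_v=\mathbf{z}_2$; since the first two rows of that system's compatibility condition hold trivially (they are the equality of mixed partials of $a$ and $b$), the content is identical.
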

\begin{proof}
	Setting the matrices:\\
	$$\Omegam:=\begin{pmatrix}
	1 & 0\\
	0 & 1\\
	g_1 & g_2
	\end{pmatrix}, \Lambdam^T:=\begin{pmatrix}
	a_u & a_v\\
	b_u & b_v
	\end{pmatrix}, 
	\mathbf{e}_{1}:=\begin{pmatrix}
	1\\
	0
	\end{pmatrix}, \mathbf{e}_{2}:=\begin{pmatrix}
	0\\
	1
	\end{pmatrix}$$ 
	as $(a,b)_u\cdot(g_1,g_2)_v=(a,b)_v\cdot(g_1,g_2)_u$, then $$\Omegam_u\Lambdam^T\ed=\begin{pmatrix}
0&0\\
0&0\\
g_{1u}&g_{2u}
\end{pmatrix}\begin{pmatrix}
a_v\\
b_v
\end{pmatrix}=\begin{pmatrix}
	0&0\\
	0&0\\
	g_{1v}&g_{2v}
\end{pmatrix}\begin{pmatrix}
	a_u\\
	b_u
\end{pmatrix}=\Omegam_v\Lambdam^T\eu$$
on the other hand, since $\Lambdam^T$ is an exact differential, $\Lambdam_u^T\ed=\Lambdam_v^T\eu$. Thus, $\Omegam\Lambdam_u^T\ed=\Omegam\Lambdam_v^T\eu$ and adding this equality to the above one, we get:
$$(\Omegam\Lambdam^T)_u\ed=\Omegam_u\Lambdam^T\ed+\Omegam\Lambdam_u^T\ed=\Omegam_v\Lambdam^T\eu+\Omegam\Lambdam_v^T\eu=(\Omegam\Lambdam^T)_v\eu$$
Denoting by $\mathbf{z}_1$ and $\mathbf{z}_2$ the first and second columns of $\Omegam\Lambdam^T$ respectively, fixing $(u_0,v_0)\in U$ and $\p \in \R^3$ the last equality is equivalent to $\mathbf{z}_{2u}=\mathbf{z}_{1v}$, which is the compatibility condition of the system:
\begin{subequations}
	\begin{align}
	\x_u&=\mathbf{z}_1
	\\
	\x_v&=\mathbf{z}_2
	\\
	\x(u_0,v_0)&=\p,
	\end{align}	
\end{subequations}
By theorem \ref{frobg}, this system of PDE has a solution $\x:V \to \R^3$, $V \subset U$, $V$ a neighborhood of $(u_0,v_0)$. Therefore $D\x=\Omegam\Lambdam^T$ and by proposition \ref{OD}, $\x:V \to \R^3$ is a frontal. Now, the first fundamental form is $D\x^TD\x=\Lambdam\Omegam^T\Omegam\Lambdam^T=\I$ as we wished. Using that $\n=(-g_1,-g_2,1)(1+g_1^2+g_2^2)^{-\frac{1}{2}}$ and (\ref{SFO}), the second fundamental form is $\Lambdam\II_\Omega=D(a,b)^T D(g_1,g_2)(1+g_1^2+g_2^2)^{-\frac{1}{2}}$. 
\end{proof}
\begin{prop}\label{propE}
Let $\x:U \to \R^3$ be a frontal and $\Omegam$ a tangent moving base of $\x$, then the matrices $\TA{1}, \TA{2}$ satisfies 
$\I_\Omega\TA{1}^T+\TA{1}\I_\Omega=\I_{\Omega u}$ and $\I_\Omega\TA{2}^T+\TA{2}\I_\Omega=\I_{\Omega v}$.	
\end{prop}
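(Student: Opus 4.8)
The plan is to unwind the definitions directly and exploit the symmetry of $\I_\Omega$. Since $\I_\Omega=\Omegam^T\Omegam$ by (\ref{IO}), differentiating with respect to $u$ gives $\I_{\Omega u}=\Omegam_u^T\Omegam+\Omegam^T\Omegam_u$. The key observation is that the two summands are transposes of one another, namely $\Omegam^T\Omegam_u=(\Omegam_u^T\Omegam)^T$, so that $\I_{\Omega u}$ is exactly the symmetrization of the matrix $\Omegam_u^T\Omegam$.

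Next I would bring in the definition (\ref{T1O}), $\TA{1}=(\Omegam_u^T\Omegam)\I_\Omega^{-1}$, and multiply on the right by $\I_\Omega$ to obtain $\TA{1}\I_\Omega=\Omegam_u^T\Omegam$. Transposing this identity and using that $\I_\Omega$ is symmetric (so $\I_\Omega^T=\I_\Omega$) gives $\I_\Omega\TA{1}^T=(\Omegam_u^T\Omegam)^T=\Omegam^T\Omegam_u$. Adding the two identities yields $\TA{1}\I_\Omega+\I_\Omega\TA{1}^T=\Omegam_u^T\Omegam+\Omegam^T\Omegam_u=\I_{\Omega u}$, which is the first claimed equality.

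The second equality follows verbatim by replacing the $u$-derivative with the $v$-derivative throughout and invoking (\ref{T2O}) in place of (\ref{T1O}), since $\I_{\Omega v}=\Omegam_v^T\Omegam+\Omegam^T\Omegam_v$ is likewise the symmetrization of $\Omegam_v^T\Omegam=\TA{2}\I_\Omega$.

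As for obstacles, there is essentially none beyond correctly tracking transposes: the whole statement reduces to the Leibniz rule applied to $\I_\Omega=\Omegam^T\Omegam$ together with the symmetry of $\I_\Omega$. The only point that deserves a word is that $\I_\Omega^{-1}$ is well-defined, which holds because $\det(\I_\Omega)>0$ was established earlier in the excerpt; everything else is immediate.
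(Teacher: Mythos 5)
Your argument is correct and is essentially the paper's own proof: both differentiate $\I_\Omega=\Omegam^T\Omegam$ via the Leibniz rule and identify the two resulting terms as $\TA{1}\I_\Omega$ and its transpose $\I_\Omega\TA{1}^T$, using the symmetry of $\I_\Omega$ and the definition $\TA{1}=(\Omegam_u^T\Omegam)\I_\Omega^{-1}$. Nothing further is needed.
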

\begin{proof}
$\I_{\Omega u}=\Omegam_u^T\Omegam+\Omegam^T\Omegam_u=\Omegam_u^T\Omegam\I_\Omega^{-1}\I_\Omega+\I_\Omega\I_\Omega^{-1}\Omegam^T\Omegam_u=\TA{1}\I_\Omega+\I_\Omega\TA{1}^T$. For $\I_{\Omega v}$ is analogous.	
\end{proof}	
\begin{prop}
	Let $\x:U \to \R^3$ be a proper frontal and $\Omegam$ a tangent moving base of $\x$, then the Christoffel symbols defined on $U-\laO^{-1}(0)$ have the following decomposition:
	\begin{align}
	\GA{1}=(\Lambdam\TA{1}+\Lambdam_u)\Lambdam^{-1} \text{ and } \GA{2}=(\Lambdam\TA{2}+\Lambdam_v)\Lambdam^{-1}\nonumber
	\end{align}
\end{prop}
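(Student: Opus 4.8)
The plan is a direct matrix computation, starting from the global decomposition $D\x=\Omegam\Lambdam^T$ together with the characterizations $\GA{1}=(D\x_u^TD\x)\I^{-1}$ and $\TA{1}=(\Omegam_u^T\Omegam)\I_\Omega^{-1}$ from (\ref{T1X}) and (\ref{T1O}). Throughout, the computation takes place on $U-\laO^{-1}(0)=\Sigma(\x)^c$, where $\laO=\det(\Lambdam)\neq0$, so that $\Lambdam$ is invertible and the Christoffel symbols $\GA{1},\GA{2}$ are defined.

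First I would differentiate the decomposition, obtaining $D\x_u=\Omegam_u\Lambdam^T+\Omegam\Lambdam_u^T$, then transpose and multiply on the right by $D\x=\Omegam\Lambdam^T$ to get
$$D\x_u^TD\x=(\Lambdam\Omegam_u^T+\Lambdam_u\Omegam^T)\Omegam\Lambdam^T=\Lambdam(\Omegam_u^T\Omegam)\Lambdam^T+\Lambdam_u(\Omegam^T\Omegam)\Lambdam^T.$$
Next I would substitute $\Omegam^T\Omegam=\I_\Omega$ and $\Omegam_u^T\Omegam=\TA{1}\I_\Omega$ (the latter by clearing $\I_\Omega^{-1}$ in the definition of $\TA{1}$), and factor out $\I_\Omega\Lambdam^T$ on the right to reach
$$D\x_u^TD\x=(\Lambdam\TA{1}+\Lambdam_u)\I_\Omega\Lambdam^T.$$

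Finally, recalling from the proof of Theorem \ref{D} that $\I=\Lambdam\I_\Omega\Lambdam^T$, on $\Sigma(\x)^c$ both $\Lambdam$ and $\I_\Omega$ are invertible, so $\I^{-1}=(\Lambdam^T)^{-1}\I_\Omega^{-1}\Lambdam^{-1}$. Multiplying the last display on the right by $\I^{-1}$ collapses the middle factors, since $\I_\Omega\Lambdam^T(\Lambdam^T)^{-1}\I_\Omega^{-1}=\id_2$, and yields $\GA{1}=(\Lambdam\TA{1}+\Lambdam_u)\Lambdam^{-1}$. The identity for $\GA{2}$ follows verbatim, replacing the $u$-derivative by the $v$-derivative and using (\ref{T2X}) and (\ref{T2O}). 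There is no serious obstacle; the only points requiring care are the invertibility of $\Lambdam$, which is precisely why the statement is restricted to $U-\laO^{-1}(0)$, and keeping the non-commuting matrix factors in their correct left--right order throughout.
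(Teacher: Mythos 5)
Your computation is correct and is essentially identical to the paper's own proof: both start from $\GA{1}=(D\x_u^TD\x)\I^{-1}$, expand $D\x_u$ via $D\x=\Omegam\Lambdam^T$, and cancel using $\I^{-1}=(\Lambdam^T)^{-1}\I_\Omega^{-1}\Lambdam^{-1}$ together with $\TA{1}\I_\Omega=\Omegam_u^T\Omegam$. The only difference is presentational (you compute $D\x_u^TD\x$ first and then multiply by $\I^{-1}$, whereas the paper carries the factor $\I^{-1}$ along throughout), so nothing further is needed.
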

\begin{proof}
	$\GA{1}=(D\x_u^{T}D\x)\I^{-1}=((\Omegam_u\Lambdam^T+\Omegam\Lambdam_u^T)^T\Omegam\Lambdam^T)(\Lambdam^{T})^{-1}\I_\Omega^{-1}\Lambdam^{-1}\\
	=(\Lambdam\Omegam_u^T+\Lambdam_u\Omegam^T)\Omegam\Lambdam^T(\Lambdam^{T})^{-1}\I_\Omega^{-1}\Lambdam^{-1}=(\Lambdam\Omegam_u^T\Omegam\I_\Omega^{-1}+\Lambdam_u\Omegam^T\Omegam\I_\Omega^{-1})\Lambdam^{-1}$. Since $\TA{1}=\Omegam_u^T\Omegam\I_\Omega^{-1}$ and $\I_\Omega=\Omegam^T\Omegam$ we have the result. For $\GA{2}$ it is analogous.
\end{proof}
\begin{rem}\label{rem1}
	With this decomposition of the Christoffel symbols, by density of non-singular points and smoothness of $\TA{i}$ on $U$, we get that $\TA{1}$ and $\TA{2}$ can be expressed by:
	\begin{itemize}
		\item For $\p\in \Sigma(\x)^c$,
		\begin{align}
		\TA{1}=\Lambdam^{-1}(\GA{1}\Lambdam-\Lambdam_u)\text{ and }\TA{2}=\Lambdam^{-1}(\GA{2}\Lambdam-\Lambdam_v)\nonumber.
		\end{align}
		\item For $\p\in \Sigma(\x)$,
		\begin{align}
		\TA{1}=\lim\limits_{(u,v)\to p}\Lambdam^{-1}(\GA{1}\Lambdam-\Lambdam_u)\text{ and }\TA{2}=\lim\limits_{(u,v)\to p}\Lambdam^{-1}(\GA{2}\Lambdam-\Lambdam_v)\nonumber.
		\end{align}

	\end{itemize}
	Where the right sides are restricted to the open set $\Sigma(\x)^c$. As $\GA{1}$ and $\GA{2}$ are expressed in terms of $E$, $F$, $G$ and these by (\ref{FFS1p}) are expressed in terms of $\EO$, $\FO$, $\GO$ and $\lambda_{ij}$, then $\TA{1}$ and $\TA{2}$ can be expressed just using $\EO$, $\FO$, $\GO$ and $\lambda_{ij}$ on $\Sigma(\x)^c$. By density, these are completely determined by $\EO$, $\FO$, $\GO$ and $\lambda_{ij}$ on $U$.   
\end{rem}
\begin{prop}\label{metric}
	Let $\I, \I_\Omega, \Lambdam:U \to \mathcal{M}_{2\times2}(\R)$ arbitrary smooth maps, $\I_\Omega$ symmetric non-singular, $\laO=det(\Lambdam)$ and $\mathfrak{T}_\Omega$ the principal ideal generated by $\laO$ in the ring $C^\infty(U,\R)$. If we have,
	\begin{align}
	\I=\begin{pmatrix}
	E&F\\
	F&G
	\end{pmatrix}=\Lambdam\I_\Omega\Lambdam^T\label{equi}
	\end{align}
	with $int(\laO^{-1}(0))=\emptyset$ and if we define $\GA{1}$ by (\ref{GA1}) and $\GA{2}$ by (\ref{GA2}) on $U-\laO^{-1}(0)$, then the maps
	\begin{subequations}
		\begin{align}
		&\Lambdam^{-1}(\GA{1}\Lambdam-\Lambdam_u),\label{espr1}\\  &\Lambdam^{-1}(\GA{2}\Lambdam-\Lambdam_v),\label{espr2}
		\end{align}
	\end{subequations}
	defined on $U-\laO^{-1}(0)$, have unique $C^{\infty}$ extensions to $U$ if and only if,
	\begin{subequations}
		\begin{align}
		\Lambdam_{(1)u}^{\phantomsection}\I_\Omega\Lambdam_{(2)}^T-\Lambdam_{(1)}^{\phantomsection}\I_\Omega\Lambdam_{(2)u}^T+E_v-F_u \in \mathfrak{T}_\Omega\label{cli1}\\
		\Lambdam_{(1)v}^{\phantomsection}\I_\Omega\Lambdam_{(2)}^T-\Lambdam_{(1)}^{\phantomsection}\I_\Omega\Lambdam_{(2)v}^T+F_v-G_u \in \mathfrak{T}_\Omega\label{cli2}
		\end{align}
	\end{subequations} 
\end{prop}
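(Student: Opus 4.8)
The plan is to reduce the biconditional to an elementary divisibility fact and then obtain, by a direct computation, a closed formula for each of the two maps \eqref{espr1}, \eqref{espr2} in which all of the singular behaviour is concentrated in a single scalar. Throughout write $\mathbf{J}=\begin{pmatrix}0&-1\\1&0\end{pmatrix}$, and recall the elementary observation that, since $int(\laO^{-1}(0))=\emptyset$, the set $\laO^{-1}(0)^c$ is dense; hence a function of the form $\tilde f/\laO$ with $\tilde f\in C^\infty(U,\R)$, defined on $\laO^{-1}(0)^c$, admits a (necessarily unique) $C^\infty$ extension to $U$ if and only if $\tilde f\in\mathfrak{T}_\Omega$. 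Uniqueness of every extension is immediate from this density, so only existence needs to be characterized.

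First I would rewrite the Christoffel matrix in a form adapted to the decomposition \eqref{equi}. A direct inspection of \eqref{GA1} gives $\GA{1}=\tfrac12(\I_u+\mathbf{A}_1)\I^{-1}$, where $\mathbf{A}_1=(E_v-F_u)\mathbf{J}$ is the skew-symmetric matrix already appearing in the proof of Theorem \ref{D}. Using $\I^{-1}\Lambdam=(\Lambdam^T)^{-1}\I_\Omega^{-1}$ together with the product rule $\I_u=\Lambdam_u\I_\Omega\Lambdam^T+\Lambdam\I_{\Omega u}\Lambdam^T+\Lambdam\I_\Omega\Lambdam_u^T$, I would expand $\Lambdam^{-1}(\GA{1}\Lambdam-\Lambdam_u)$ and collect terms, arriving at
\[
\begin{aligned}
\Lambdam^{-1}(\GA{1}\Lambdam-\Lambdam_u)=\ &\tfrac12\I_{\Omega u}\I_\Omega^{-1}-\tfrac12\Lambdam^{-1}\Lambdam_u\\
&+\tfrac12\I_\Omega\Lambdam_u^T(\Lambdam^T)^{-1}\I_\Omega^{-1}+\tfrac12\Lambdam^{-1}\mathbf{A}_1(\Lambdam^T)^{-1}\I_\Omega^{-1}.
\end{aligned}
\]

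The crux is to show that the three singular terms collapse. I would observe that the skew-symmetric matrix $\Lambdam\I_\Omega\Lambdam_u^T-\Lambdam_u\I_\Omega\Lambdam^T+\mathbf{A}_1$ must equal $\beta_1\mathbf{J}$ for a scalar $\beta_1$, and computing its $(2,1)$-entry (using the symmetry of $\I_\Omega$ to identify a scalar with its transpose) gives exactly $\beta_1=\Lambdam_{(1)u}\I_\Omega\Lambdam_{(2)}^T-\Lambdam_{(1)}\I_\Omega\Lambdam_{(2)u}^T+E_v-F_u$, the left-hand side of \eqref{cli1}. Multiplying the identity $\Lambdam\I_\Omega\Lambdam_u^T-\Lambdam_u\I_\Omega\Lambdam^T+\mathbf{A}_1=\beta_1\mathbf{J}$ on the left by $\Lambdam^{-1}$ and on the right by $(\Lambdam^T)^{-1}\I_\Omega^{-1}$, and invoking the $2\times2$ identities $B\mathbf{J}B^T=\det(B)\mathbf{J}$ (whence $B^{-1}\mathbf{J}(B^T)^{-1}=\det(B)^{-1}\mathbf{J}$), the three singular terms combine into $\tfrac{\beta_1}{2\laO}\mathbf{J}\I_\Omega^{-1}$, so that
\[
\Lambdam^{-1}(\GA{1}\Lambdam-\Lambdam_u)=\tfrac12\I_{\Omega u}\I_\Omega^{-1}+\frac{\beta_1}{2\laO}\mathbf{J}\I_\Omega^{-1}.
\]
This is the main obstacle: the determinant identity is precisely what turns the naive $1/\laO^2$ singularity of $\Lambdam^{-1}\GA{1}\Lambdam$ into a simple pole $\beta_1/\laO$.

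Finally I would conclude. Since $\tfrac12\I_{\Omega u}\I_\Omega^{-1}$ is smooth on $U$ and $\mathbf{J}\I_\Omega^{-1}$ is a smooth matrix with $\det(\mathbf{J}\I_\Omega^{-1})=\det(\I_\Omega)^{-1}\neq0$, multiplying by the smooth inverse $\I_\Omega\mathbf{J}^{-1}$ shows that \eqref{espr1} extends smoothly to $U$ if and only if the scalar $\beta_1/\laO$ does, i.e.\ if and only if $\beta_1\in\mathfrak{T}_\Omega$, which is precisely \eqref{cli1}. The map \eqref{espr2} is handled identically: writing $\GA{2}=\tfrac12(\I_v+\mathbf{A}_2)\I^{-1}$ with $\mathbf{A}_2=(F_v-G_u)\mathbf{J}$ yields $\Lambdam^{-1}(\GA{2}\Lambdam-\Lambdam_v)=\tfrac12\I_{\Omega v}\I_\Omega^{-1}+\tfrac{\beta_2}{2\laO}\mathbf{J}\I_\Omega^{-1}$, where $\beta_2$ is the left-hand side of \eqref{cli2}, giving the equivalence with \eqref{cli2} and completing the proof.
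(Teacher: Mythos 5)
Your proof is correct and follows essentially the same route as the paper: the same rewriting $\GA{1}=\tfrac12(\I_u+\mathbf{A}_1)\I^{-1}$, the same key observation that $\Lambdam\I_\Omega\Lambdam_u^T-\Lambdam_u\I_\Omega\Lambdam^T+\mathbf{A}_1$ is skew-symmetric with $(2,1)$-entry equal to the left-hand side of (\ref{cli1}), and the same $2\times2$ identity $\Lambdam\mathbf{J}\Lambdam^T=\laO\mathbf{J}$ (which the paper applies entrywise when it extracts $-\tau_1\laO$). The only difference is organizational: you compress both implications into the single closed formula $\Lambdam^{-1}(\GA{1}\Lambdam-\Lambdam_u)=\tfrac12\I_{\Omega u}\I_\Omega^{-1}+\tfrac{\beta_1}{2\laO}\mathbf{J}\I_\Omega^{-1}$, whereas the paper argues necessity and sufficiency separately, explicitly constructing the extension $\TA{1}=\tfrac12(\tau_1\mathbf{J}+\I_{\Omega u})\I_\Omega^{-1}$ in the sufficient direction.
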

\begin{proof}
	For the necessary condition, let us set the skew-symmetric matrix
	$$
	\mathbf{A}_1:=\begin{pmatrix}
	0&-(E_v-F_u)\\
	E_v-F_u&0
	\end{pmatrix}$$
	and suppose that $\TA{1}$ is the $C^{\infty}$ extension of $\Lambdam^{-1}(\GA{1}\Lambdam-\Lambdam_u)$, then
	\begin{align}
	\Lambdam\TA{1}=\GA{1}\Lambdam-\Lambdam_u
	\end{align}
	on $U-\laO^{-1}(0)$, hence using (\ref{GA1}) we have
	\begin{align}
	\Lambdam\TA{1}=(\frac{1}{2}\I_u+\frac{1}{2}\mathbf{A}_1)\I^{-1}\Lambdam-\Lambdam_u.
	\end{align}
	Substituting $\I$ and $\I_u$ in the last equality using (\ref{equi}) and multiplying by the right side with $2\I_\Omega\Lambdam^T$, operating some terms we can get,
	\begin{align}
	\Lambdam(2\TA{1}\I_\Omega-\I_{\Omega u})\Lambdam^T=\Lambdam\I_\Omega\Lambdam_u^T-\Lambdam_u\I_\Omega\Lambdam^T+\mathbf{A}_1.\label{equi2}
	\end{align}
	Observe that, the right side of (\ref{equi2}) is skew-symmetric, then $2\TA{1}\I_\Omega-\I_{\Omega u}$ as well and since $U-\laO^{-1}(0)$ is dense, this is also true on $U$. Thus, 
	$$2\TA{1}\I_\Omega-\I_{\Omega u}=\begin{pmatrix}
		0&-\tau_1\\
		\tau_1&0
	\end{pmatrix}$$
	for any $\tau_1\in C^\infty(U,\R)$ and since the equality (\ref{equi2}) is valid on $U$ by density, then multiplying this by left side with $\begin{pmatrix}
	1&0\end{pmatrix}$ and by the right side with $\begin{pmatrix}
	0&1\end{pmatrix}^T$, we obtain,
	$$-\tau_1\laO=\Lambdam_{(1)}^{\phantomsection}\begin{pmatrix}
	0&-\tau_1\\
	\tau_1&0
	\end{pmatrix}\Lambdam_{(2)}^T=\Lambdam_{(1)}^{\phantomsection}\I_\Omega\Lambdam_{(2)u}^T-\Lambdam_{(1)u}^{\phantomsection}\I_\Omega\Lambdam_{(2)}^T-(E_v-F_u)$$ and from it follows (\ref{cli1}). Setting the matrix:
	$$
	\mathbf{A}_2:=\begin{pmatrix}
	0&-(F_v-G_u)\\
	F_v-G_u&0
	\end{pmatrix}$$ and observing that $\GA{2}=(\frac{1}{2}\I_v+\frac{1}{2}\mathbf{A}_2)\I^{-1}$, proceeding similarly as before, it follows (\ref{cli2}). For the sufficient condition, if we have (\ref{cli1}), (\ref{cli2}), as $U-\laO^{-1}(0)$ is dense then there exist unique $\tau_1, \tau_2 \in C^\infty(U,\R)$ such that, 
	$$\Lambdam_{(1)u}^{\phantomsection}\I_\Omega\Lambdam_{(2)}^T-\Lambdam_{(1)}^{\phantomsection}\I_\Omega\Lambdam_{(2)u}^T+E_v-F_u=\laO\tau_1,$$
	$$\Lambdam_{(1)v}^{\phantomsection}\I_\Omega\Lambdam_{(2)}^T-\Lambdam_{(1)}^{\phantomsection}\I_\Omega\Lambdam_{(2)v}^T+F_v-G_u=\laO\tau_2.$$
	Defining the smooth maps on $U$,
	\begin{align}\label{deftau}
	\TA{1}:=\frac{1}{2}(\begin{pmatrix}
	0&-\tau_1\\
	\tau_1&0
	\end{pmatrix}+\I_{\Omega u})\I_\Omega^{-1}\text{ and }
	\TA{2}:=\frac{1}{2}(\begin{pmatrix}
	0&-\tau_2\\
	\tau_2&0
	\end{pmatrix}+\I_{\Omega v})\I_\Omega^{-1},
	\end{align}
	we have that 
	$$\Lambdam(2\TA{1}\I_\Omega-\I_{\Omega u})\Lambdam^T=\Lambdam\I_\Omega\Lambdam_u^T-\Lambdam_u\I_\Omega\Lambdam^T+\mathbf{A}_1,$$
	$$\Lambdam(2\TA{2}\I_\Omega-\I_{\Omega v})\Lambdam^T=\Lambdam\I_\Omega\Lambdam_v^T-\Lambdam_v\I_\Omega\Lambdam^T+\mathbf{A}_2,$$ 
	which leads to $\TA{1}$ and $\TA{2}$ be equal to (\ref{espr1}) and (\ref{espr2}) respectively on $U-\laO^{-1}(0)$. Thus, by density and smoothness of $\TA{1}$ and $\TA{2}$, these are unique $C^{\infty}$-extensions.   	  
\end{proof}
\begin{rem}\label{tau}
By proposition \ref{metric}, we always can define the matrices $\TA{1}, \TA{2}$ by (\ref{deftau}) from a smooth map $\I:U \to \mathcal{M}_{2\times2}(\R)$ satisfying a decomposition as in (\ref{equi}) with the conditions (\ref{cli1}) and (\ref{cli2}). These maps $\TA{1},\TA{2}$ automatically satisfy the relationships of proposition \ref{propE} as also are the unique $C^\infty$ extension of (\ref{espr1}) and (\ref{espr2}). It is natural the question if a decomposition as in (\ref{equi}) implies the conditions (\ref{cli1}), (\ref{cli2}) and the answer is not. For example the matrix $\I$ associated to the first fundamental form of $(u,v^2,uv)$ (The Whitney cross-cap) is singular at $(0,0)$ and have a rank $\geq1$ on the entire $\R^2$, then you can obtain the Cholesky decomposition $\I=\Lambdam\Lambdam^T$ (here $\I_{\Omega}$ can be chosen as $\id_2$), where $\Lambdam:\R^2 \to \mathcal{M}_{2\times2}(\R)$ is smooth and a lower triangular matrix. It is not difficult to check that in this case the condition (\ref{cli1}) and (\ref{cli1}) are not satisfied for all neighborhood of $(0,0)$.  	
\end{rem}
\begin{definition}
	Let $\x:U \to \R^3$ be a frontal and $\Omegam$ a tangent moving base  of $\x$, we define the $\Omega$-{\it relative curvature} $\KO:=det(\boldsymbol{\mu})$ and the $\Omega$-{\it relative mean curvature} $H_\Omega:=-\frac{1}{2}tr(\mum adj(\Lambdam))$, where $tr()$ is the trace and $adj()$ is the adjoint of a matrix.
\end{definition} 
We are going to use $\KO$ and $\HO$ to characterize wave fronts in propositions \ref{wft} and \ref{wf}, but first we need to prove some propositions. The reason why we call these functions curvatures is in the following result. 

\begin{prop}\label{lim} 
	Let $\x:U \to \R^3$ be a proper frontal, $\Omegam$ a tangent moving base  of $\x$, $\KO$, $H_\Omega$, $K$ and $H$ the $\Omega$-relative curvature, the $\Omega$-relative mean curvature, the Gaussian curvature and the mean curvature of $\x$ respectively.Then,	
	\begin{itemize}
		\item for $\p\in \Sigma(\x)^c$, $\KO=\laO K$ and $H_\Omega=\laO H$,
		\item for $\p\in \Sigma(\x)$, $\KO=\lim\limits_{(u,v)\to p}\laO K$ and $H_\Omega=\lim\limits_{(u,v)\to p}\laO H$, 
		
	\end{itemize} 
where the right sides are restricted to the open set $\Sigma(\x)^c$.	
\end{prop}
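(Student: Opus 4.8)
The plan is to settle the statement first on the regular set $\Sigma(\x)^c$ by a direct matrix computation, and then deduce the singular case by a density--continuity argument, which is precisely where the hypothesis that $\x$ is a \emph{proper} frontal is used. On $\Sigma(\x)^c$ the decompositions established in Theorem \ref{D}, namely $\I=\Lambdam\I_\Omega\Lambdam^T$ and $\II=\Lambdam\II_\Omega$, together with the invertibility of $\Lambdam$ there, are the whole input. Substituting them into $\boldsymbol{\alpha}=-\II^T\I^{-1}$ and cancelling the factor $\Lambdam^T(\Lambdam^T)^{-1}$, I would obtain the key relation $\boldsymbol{\alpha}=\mum\Lambdam^{-1}$, where $\mum=-\II_\Omega^T\I_\Omega^{-1}$.

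Next I would recall the classical expressions of the curvatures in terms of the Weingarten matrix at non-singular points, $K=\det(\boldsymbol{\alpha})$ and $H=-\tfrac{1}{2}\operatorname{tr}(\boldsymbol{\alpha})$; these hold with the present sign conventions because $D\n=D\x\,\boldsymbol{\alpha}^T$ exhibits $-\boldsymbol{\alpha}^T$ as the matrix of the shape operator, whose eigenvalues are the principal curvatures. From here the computation is purely algebraic. Taking determinants gives $K=\det(\mum\Lambdam^{-1})=\det(\mum)\,\laO^{-1}=\KO\laO^{-1}$, hence $\KO=\laO K$. For the mean curvature I would use $\Lambdam^{-1}=\laO^{-1}\operatorname{adj}(\Lambdam)$ to write $\laO H=-\tfrac{\laO}{2}\operatorname{tr}(\mum\Lambdam^{-1})=-\tfrac{1}{2}\operatorname{tr}(\mum\operatorname{adj}(\Lambdam))=\HO$, which matches the defining sign of $\HO$ and disposes of the first bullet.

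For the singular points the essential observation is that $\KO=\det(\mum)$ and $\HO=-\tfrac{1}{2}\operatorname{tr}(\mum\operatorname{adj}(\Lambdam))$ are smooth on all of $U$: since $\I_\Omega=\Omegam^T\Omegam$ is invertible everywhere (recall $\det(\I_\Omega)>0$), $\mum$ is smooth on $U$, and $\operatorname{adj}(\Lambdam)$ is smooth as well. Because $\x$ is a proper frontal, $\Sigma(\x)^c$ is dense in $U$, so every $\p\in\Sigma(\x)$ is an accumulation point of $\Sigma(\x)^c$. On that dense set the continuous functions $\KO$ and $\HO$ coincide with $\laO K$ and $\laO H$ by the first part; continuity of $\KO,\HO$ then forces $\KO(\p)=\lim_{(u,v)\to\p}\laO K$ and $\HO(\p)=\lim_{(u,v)\to\p}\laO H$, with the limits taken over $\Sigma(\x)^c$.

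The main (and really the only) subtlety is this extension step. On $\Sigma(\x)^c$ the identities are a one-line algebraic consequence of $\boldsymbol{\alpha}=\mum\Lambdam^{-1}$; the content lies in recognizing beforehand that the $\Omega$-relative curvatures remain globally smooth, whereas $K$ and $H$ themselves blow up like $\laO^{-1}$ near $\Sigma(\x)$, so that the regularizing factor $\laO$ is exactly what makes the products extend. Properness is indispensable here: it guarantees that the regular set is dense, so that the smooth values of $\KO,\HO$ on $\Sigma(\x)$ are genuinely recovered as limits of $\laO K,\laO H$.
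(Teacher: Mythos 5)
Your argument is correct and coincides with the paper's own proof: the identity $\boldsymbol{\alpha}=\mum\Lambdam^{-1}$ obtained from $\I=\Lambdam\I_\Omega\Lambdam^T$ and $\II=\Lambdam\II_\Omega$, the determinant and $\operatorname{adj}(\Lambdam)=\laO\Lambdam^{-1}$ manipulations, and the density-plus-smoothness extension to $\Sigma(\x)$ are exactly the steps used there. No gaps.
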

\begin{proof}
	By theorem \ref{D}, $\I=\Lambdam\I_\Omega \Lambdam^T$ and $\II=\Lambdam\II_\Omega$, then for $\p\in \Sigma(\x)^c$, $\boldsymbol{\alpha}=-\II^T\I^{-1}=-\II_\Omega^T\Lambdam^T(\Lambdam^T)^{-1}\I_\Omega^{-1}\Lambdam^{-1}=\boldsymbol{\mu}\Lambdam^{-1}$. Thus, $\boldsymbol{\alpha}\Lambdam=\boldsymbol{\mu}$ and $\KO=det(\boldsymbol{\mu})=det(\boldsymbol{\alpha})det(\Lambdam)=\laO K$. Also, we have $\boldsymbol{\alpha}\laO=\boldsymbol{\mu}adj(\Lambdam)$, then $H_\Omega=-\frac{1}{2}tr(\boldsymbol{\mu} adj(\Lambdam))=-\frac{1}{2}\laO tr(\boldsymbol{\alpha})=\laO H$. By density of $\Sigma(\x)^c$ and the smoothness of $\KO$ and $H_\Omega$ we have the result for $\p\in \Sigma(\x)$.  
\end{proof}
\begin{prop}\label{zeros}
	Let $\x:U \to \R^3$ be a frontal and $\Omegam$ a tangent moving base of $\x$. The zeros of $\KO$ and $H_\Omega$ do not depend on the tangent moving base $\Omegam$ chosen for $\x$. Also, the signs are preserved if we restrict $\Omegam$ to compatibles tangent moving bases.
\end{prop}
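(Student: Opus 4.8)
The plan is to compare the two relative curvatures by expressing one tangent moving base in terms of the other. Let $\Omegam$ and $\Omegamb$ be tangent moving bases of $\x$, with induced normals $\n$ and $\bar{\n}$. First I would produce a smooth change of frame $\Omegamb=\Omegam\mathbf{P}$, where $\mathbf{P}:=\I_\Omega^{-1}\Omegam^T\Omegamb:U\to GL(2)$. On the dense open set $\Sigma(\x)^c$ both spans $\spann{\w_1}{\w_2}$ and $\spann{\bar\w_1}{\bar\w_2}$ equal $\spann{\x_u}{\x_v}$, so the two normals coincide up to sign there; by smoothness this forces $\spann{\w_1}{\w_2}=\spann{\bar\w_1}{\bar\w_2}$ on all of $U$, which makes $\mathbf{P}$ well defined, smooth, and invertible (its inverse is obtained symmetrically from $\Omegamb$). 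Since $\bar\w_1\times\bar\w_2=\det(\mathbf{P})\,\w_1\times\w_2$, the determinant of $\mathbf{P}$ never vanishes and $\bar{\n}=\varepsilon\,\n$ with $\varepsilon:=\mathrm{sign}(\det\mathbf{P})$ locally constant; moreover compatibility of $\Omegam$ and $\Omegamb$ amounts exactly to $\det\mathbf{P}>0$.

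Next I would record the induced transformation laws, which form the whole engine of the argument. From $\I_{\bar\Omega}=\Omegamb^T\Omegamb=\mathbf{P}^T\I_\Omega\mathbf{P}$ and $\Lambdam=D\x^T\Omegam\I_\Omega^{-1}$ one gets $\Lambdamb=\Lambdam(\mathbf{P}^T)^{-1}$, hence $\det(\Lambdamb)=\laO/\det\mathbf{P}$. Using $D\bar{\n}=\varepsilon\,D\n$ together with $\II_\Omega=-\Omegam^TD\n$ gives $\II_{\bar\Omega}=\varepsilon\,\mathbf{P}^T\II_\Omega$, and therefore
\[
\bar{\mum}=-\II_{\bar\Omega}^T\I_{\bar\Omega}^{-1}=\varepsilon\,\mum(\mathbf{P}^T)^{-1}.
\]

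Finally I would substitute these into the definitions $\KO=\det(\mum)$ and $\HO=-\tfrac12\,\mathrm{tr}(\mum\,\mathrm{adj}(\Lambdam))$. For the relative curvature, $K_{\bar\Omega}=\det(\bar{\mum})=\varepsilon^2\det(\mum)/\det\mathbf{P}=\KO/\det\mathbf{P}$, so $K_{\bar\Omega}$ and $\KO$ vanish at exactly the same points, and they carry the same sign precisely when $\det\mathbf{P}>0$, i.e.\ for compatible bases. For the relative mean curvature, using $\mathrm{adj}(AB)=\mathrm{adj}(B)\,\mathrm{adj}(A)$ and $\mathrm{adj}(M)=\det(M)M^{-1}$ for $2\times2$ matrices one finds $\mathrm{adj}(\Lambdamb)=\tfrac{1}{\det\mathbf{P}}\mathbf{P}^T\mathrm{adj}(\Lambdam)$, whence $\bar{\mum}\,\mathrm{adj}(\Lambdamb)=\tfrac{\varepsilon}{\det\mathbf{P}}\,\mum\,\mathrm{adj}(\Lambdam)$ and
\[
H_{\bar\Omega}=-\tfrac12\,\mathrm{tr}\!\big(\bar{\mum}\,\mathrm{adj}(\Lambdamb)\big)=\frac{\varepsilon}{\det\mathbf{P}}\,\HO=\frac{1}{|\det\mathbf{P}|}\,\HO .
\]
Since $1/|\det\mathbf{P}|>0$ always, $H_{\bar\Omega}$ and $\HO$ share both their zeros and their sign irrespective of compatibility. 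Together with the statement for $\KO$ this yields base-independence of the zero sets and preservation of signs on compatible bases.

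The main obstacle is the first step: justifying that the two bases differ by a genuine smooth $GL(2)$-valued gauge $\mathbf{P}$ and correctly tracking the sign $\varepsilon$. The delicate point sits on the singular set, where $\x_u,\x_v$ no longer span a plane, so equality of the two limiting tangent planes is recovered only through density of $\Sigma(\x)^c$ and continuity. The remaining subtlety is the interplay between $\varepsilon=\mathrm{sign}(\det\mathbf{P})$, coming from a possible reversal of the normal, and the scalar factor $1/\det\mathbf{P}$; it is exactly their combination into $1/|\det\mathbf{P}|$ that makes $\HO$ behave better than $\KO$ with respect to orientation.
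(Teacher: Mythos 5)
Your proof is correct and follows the same basic strategy as the paper: relate the two bases by a smooth $GL(2)$-valued gauge ($\Omegamb=\Omegam\mathbf{P}$, the paper writes $\Omegam=\bar{\Omegam}\mathbf{C}$ with $\mathbf{C}=\mathbf{P}^{-1}$), derive the transformation laws for $\Lambdam$ and $\mum$, and read off the effect on $\KO=\det(\mum)$ and $\HO=-\tfrac12\mathrm{tr}(\mum\,\mathrm{adj}(\Lambdam))$. The one substantive difference is that you track the sign $\varepsilon=\mathrm{sign}(\det\mathbf{P})$ coming from the reversal of the induced normal $\bar{\n}=\varepsilon\n$, whereas the paper's computation of $\mum=\bar{\mum}(\mathbf{C}^T)^{-1}$ silently identifies $-D\n^T\bar{\Omegam}$ with $\II_{\bar{\Omega}}^T$, which is only valid when $\det\mathbf{C}>0$. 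Carrying the $\varepsilon$ through, your transformation law $H_{\bar\Omega}=\varepsilon\HO/\det\mathbf{P}=\HO/|\det\mathbf{P}|$ is the correct one (it is also what $\HO=\laO H$ predicts, since $\laO$ and $H$ each flip sign under an orientation-reversing change of base), and it yields slightly more than the proposition claims: the sign of $\HO$, unlike that of $\KO$, is preserved under \emph{arbitrary} changes of tangent moving base. The paper's stated factor $\det(\mathbf{C})$ for $H_{\bar\Omega}$ should be $|\det(\mathbf{C})|$; the proposition as stated is of course still implied. One caveat you share with the paper: identifying $\spann{\w_1}{\w_2}$ with $\spann{\bar{\w}_1}{\bar{\w}_2}$ on all of $U$ uses density of $\Sigma(\x)^c$ (you say this explicitly, the paper asserts the equality of spans without comment), so strictly speaking both arguments need $\x$ to be a \emph{proper} frontal; for a non-proper frontal two tangent moving bases need not span the same plane on $\mathrm{int}(\Sigma(\x))$ and the conclusion can fail there.
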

\begin{proof}
	Let $\Omegam=\begin{pmatrix}\w_1&\w_2\end{pmatrix}$ and $\bar{\Omegam}=\begin{pmatrix}\bar{\w}_1&\bar{\w}_2\end{pmatrix}$ be tangent moving bases of $\x$, $\Lambdam=D\x^T\Omegam(\I_\Omega^T)^{-1}$ and $\bar{\Lambdam}=D\x^T\bar{\Omegam}(\I_{\bar{\Omega}}^T)^{-1}$. Since, $\spann{\w_1}{\w_2}=\n^\bot=\spann{\bar{\w}_1}{\bar{\w}_2}$, there exist $\mathbf{C}\in GL(2)$ such that $\Omegam=\bar{\Omegam}\mathbf{C}$. Then, $\Lambdam=D\x^T\bar{\Omegam}\mathbf{C}(\mathbf{C}^T\bar{\Omegam}^T\bar{\Omegam}\mathbf{C})^{-1}=D\x^T\bar{\Omegam}(\I_{\bar{\Omega}}^T)^{-1}(\mathbf{C}^T)^{-1}=\bar{\Lambdam}(\mathbf{C}^T)^{-1}$. On the other hand, $\mum=-\II_\Omega^T\I_\Omega^{-1}=-D\n^T\Omegam \I_\Omega^{-1}\\=-D\n^T\bar{\Omegam}\mathbf{C}(\mathbf{C}^T\bar{\Omegam}^T\bar{\Omegam}\mathbf{C})^{-1}=-\II_{\bar{\Omega}}^T\I_{\bar{\Omega}}^{-1}(\mathbf{C}^T)^{-1}=\bar{\mum}(\mathbf{C}^T)^{-1}$. Now, $K_{\bar{\Omega}}=det(\bar{\mum})=det(\mum)det(\mathbf{C})=det(\mathbf{C})\KO$ and $H_{\bar{\Omega}}=-\frac{1}{2}tr(\bar{\mum}adj(\bar{\Lambdam}))=-\frac{1}{2}tr(\mum\mathbf{C}^T adj(\mathbf{C}^T)adj(\Lambdam))\\=-\frac{1}{2}tr(\mum adj(\Lambdam))det(\mathbf{C})=det(\mathbf{C})H_{\Omega}$, then $\KO=0$ if and only if, $K_{\bar{\Omega}}=0$ and $H_{\Omega}=0$ if and only if, $H_{\bar{\Omega}}=0$. For the last assertion, observe that, if $\Omegam$ and $\bar{\Omegam}$ are compatibles, as $\Omegam=\bar{\Omegam}\mathbf{C}$, then $\w_{1}\times\w_{2}=det(\mathbf{C})\bar{\w_{1}}\times\bar{\w_{2}}$ and thus $det(\mathbf{C})=(\w_{1}\times\w_{2}\cdot\bar{\w_{1}}\times\bar{\w_{2}})|\bar{\w_{1}}\times\bar{\w_{2}}|^{-2}>0$, therefore $\KO$ and $H_{\Omega}$ have the same sign of $K_{\bar{\Omega}}$ and $H_{\bar{\Omega}}$.   
\end{proof}
If we have a frontal $\x:U \to \R^3$ with a tangent moving base $\Omegam$ and we compose $\x$ with a diffeomorphism $\mathbf{h}:V \to U$, this composition results a frontal ($D(\x\circ \mathbf{h})=(\Omegam\circ\mathbf{h}) (\Lambdam\circ\mathbf{h})^T D\mathbf{h})$ with  $\Omegam\circ\mathbf{h}$ being a tangent moving base of $\x\circ \mathbf{h}$. Similarly, if we compose $\x$ with a diffeomorphism $\mathbf{k}:W \to Z$, $\x(U)\subset W$, where $W$, $Z$ are open sets of $\R^2$, this composition results a frontal ($D(\mathbf{k}\circ \x)=D\mathbf{k}(\x)\Omegam \Lambdam^T)$ with $D\mathbf{k}(\x)\Omegam$ being a tangent moving base of $\x\circ \mathbf{h}$. Also, it is not difficult to see that if we have a front $\x:U \to \R^3$, then  $\x\circ\mathbf{h}$ and $\boldsymbol{\phi}\circ\x$ are fronts when  $\boldsymbol{\phi}:\R^3 \to \R^3$ is an isometry of $\R^3$ and $\mathbf{h}:V \to U$ is a diffeomorphism.   
\begin{prop}\label{change1}
	Let $\x:U \to \R^3$ be a frontal, $\mathbf{h}:V \to U$ a diffeomorphism, $\bar{\x}:=\x\circ \mathbf{h}$ the composite frontal, $\Omegam$ and $\Omegamb$ tangent moving bases of $\x$ and $\bar{\x}$ respectively. If $\KO$, $\HO$  are the relative curvatures for $\x$ and $\bar{K}_{\Omegab}$, $\bar{H}_{\Omegab}$ are the relative curvatures for $\bar{\x}$, then 
	\begin{itemize}
		\item $\KO(\mathbf{h}(x,y))=0$ if and only if $\bar{K}_{\Omegab}(x,y)=0$. 
		
		\item $\HO(\mathbf{h}(x,y))=0$ if and only if $\bar{H}_{\Omegab}(x,y)=0$.
		
	\end{itemize}    
\end{prop}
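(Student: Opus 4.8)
The plan is to exploit the freedom in the choice of tangent moving base. By Proposition \ref{zeros} the zero sets of $\KO$ and $\HO$ attached to a fixed frontal do not depend on the tangent moving base used, and the same is true for $\bar{K}_{\Omegab}$, $\bar{H}_{\Omegab}$ on $\bar{\x}$; hence it suffices to verify the correspondence for one convenient pair of bases. The paragraph preceding the statement supplies the natural candidate: writing $D\x=\Omegam\Lambdam^T$, we have $D\bar{\x}=(\Omegam\circ\mathbf{h})(\Lambdam\circ\mathbf{h})^TD\mathbf{h}$ with $\Omegam\circ\mathbf{h}$ a tangent moving base of $\bar{\x}$. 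So first I fix $\Omegam$ on $\x$ and take the particular base $\Omegamb:=\Omegam\circ\mathbf{h}$ on $\bar{\x}$.

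With this choice I compute how the data of $\bar{\x}$ pull back under $\mathbf{h}$. Comparing $D\bar{\x}=\Omegamb\Lambdamb^T$ with the expression above gives $\Lambdamb=(D\mathbf{h})^T(\Lambdam\circ\mathbf{h})$. Since the columns of $\Omegamb$ are the $\w_i\circ\mathbf{h}$, the induced unit normal obeys $\bar{\n}=\n\circ\mathbf{h}$, so $\I_{\Omegab}=\Omegamb^T\Omegamb=\I_\Omega\circ\mathbf{h}$ and, using $D\bar{\n}=(D\n\circ\mathbf{h})D\mathbf{h}$, $\II_{\Omegab}=-\Omegamb^TD\bar{\n}=(\II_\Omega\circ\mathbf{h})D\mathbf{h}$. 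Plugging these into (\ref{W}) yields $\bar{\mum}=-\II_{\Omegab}^T\I_{\Omegab}^{-1}=(D\mathbf{h})^T(\mum\circ\mathbf{h})$. Taking determinants immediately gives $\bar{K}_{\Omegab}=\det(\bar{\mum})=\det(D\mathbf{h})\,(\KO\circ\mathbf{h})$.

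For the mean curvature I work with the adjugate. From $\Lambdamb=(D\mathbf{h})^T(\Lambdam\circ\mathbf{h})$, the $2\times2$ identities $\mathrm{adj}(AB)=\mathrm{adj}(B)\,\mathrm{adj}(A)$ and $\mathrm{adj}(A)^TA^T=\det(A)\,\id_2$, and the fact that $\mathrm{adj}$ (being polynomial in the entries) commutes with $\circ\,\mathbf{h}$, I obtain $\bar{\mum}\,\mathrm{adj}(\Lambdamb)=(D\mathbf{h})^T(\mum\circ\mathbf{h})(\mathrm{adj}(\Lambdam)\circ\mathbf{h})\,\mathrm{adj}(D\mathbf{h})^T$. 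Cyclicity of the trace then moves the trailing $\mathrm{adj}(D\mathbf{h})^T$ next to $(D\mathbf{h})^T$, collapsing the two into the scalar $\det(D\mathbf{h})$, so $\bar{H}_{\Omegab}=-\frac{1}{2}\mathrm{tr}(\bar{\mum}\,\mathrm{adj}(\Lambdamb))=\det(D\mathbf{h})\,(\HO\circ\mathbf{h})$. Since $\mathbf{h}$ is a diffeomorphism, $\det(D\mathbf{h})$ never vanishes; hence $\bar{K}_{\Omegab}(x,y)=0$ if and only if $\KO(\mathbf{h}(x,y))=0$, and $\bar{H}_{\Omegab}(x,y)=0$ if and only if $\HO(\mathbf{h}(x,y))=0$, which is the claim.

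I expect the only delicate point to be the trace computation for $\HO$: one must apply the order-reversing adjugate rule and the identity $\mathrm{adj}(A)^TA^T=\det(A)\id_2$ in the correct order and keep track of the commutation of $\mathrm{adj}$ with $\circ\,\mathbf{h}$. Everything else is direct chain-rule bookkeeping, and the reduction from a single convenient base to arbitrary bases is entirely absorbed by Proposition \ref{zeros}.
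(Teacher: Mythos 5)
Your proposal is correct and follows essentially the same route as the paper: reduce to the pulled-back base $\Omegam\circ\mathbf{h}$ via Proposition \ref{zeros}, compute $\bar{\Lambdam}=(D\mathbf{h})^T(\Lambdam\circ\mathbf{h})$ and $\bar{\mum}=(D\mathbf{h})^T(\mum\circ\mathbf{h})$, and conclude by taking determinants and by the trace/adjugate manipulation with the nonvanishing factor $\det(D\mathbf{h})$. (Your formula $\bar{K}_{\Omegab}=\det(D\mathbf{h})(\KO\circ\mathbf{h})$ is in fact slightly more careful than the paper's statement of the same step, but the argument is the same.)
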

\begin{proof}
	For the first item, as $\hat{\Omegam}:=\Omegam(\mathbf{h})$ is a tangent moving base of $\bar{\x}(x,y)$, by (\ref{zeros}) $\bar{K}_{\hat{\Omega}}(x,y)=0$ if and only if $\bar{K}_{\Omegab}(x,y)=0$, but observe that $\bar{K}_{\hat{\Omega}}(x,y)=\KO(\mathbf{h}(x,y))$ which proves the item. On the other hand $\hat{\Lambdam}=D\bar{\x}^T\hat{\Omegam}(\I_{\hat{\Omega}})^{-1}=D\mathbf{h}^T\Lambdam(\mathbf{h})$ and $\hat{\n}=\n\circ \mathbf{h}$, then $\hat{\mum}=-\II_{\hat{\Omega}}^T\I_{\hat{\Omega}}^{-1}=-D\hat{\n}^T\hat{\Omegam} \I_{\hat{\Omega}}^{-1}=-D\mathbf{h}^TD\n^T(\mathbf{h})\hat{\Omegam} \I_{\hat{\Omega}}^{-1}=-D\mathbf{h}^T\II_{\Omega}^T(\mathbf{h})\I_{\Omega}^{-1}(\mathbf{h})=D\mathbf{h}^T\mum(\mathbf{h})$, thus $\bar{H}_{\hat{\Omega}}=-\frac{1}{2}tr(\hat{\mum} adj(\hat{\Lambdam}))=-\frac{1}{2}tr( adj(\hat{\Lambdam})\hat{\mum})=-det(D\mathbf{h})\frac{1}{2}tr(\mum(\mathbf{h}) adj(\Lambdam(\mathbf{h})))=det(D\mathbf{h})\HO(\mathbf{h})$ and therefore $\HO(\mathbf{h}(x,y))=0$ if and only if $\bar{H}_{\hat{\Omega}}(x,y)=0$. By proposition \ref{zeros} it follows the second item.      
\end{proof}
\begin{prop}\label{change2}
	Let $\x:U \to \R^3$ be a frontal, $\boldsymbol{\phi}:\R^3 \to \R^3$ an isometry of $\R^3$, $\bar{\x}:=\boldsymbol{\phi}\circ\x$ the composite frontal, $\Omegam$ and $\Omegamb$ tangent moving bases of $\x$ and $\bar{\x}$ respectively. If $\KO$, $\HO$  are the relative curvatures for $\x$ and $\bar{K}_{\Omegab}$, $\bar{H}_{\Omegab}$ are the relative curvatures for $\bar{\x}$, then 
	\begin{itemize}
		\item $\KO(u,v)=0$ if and only if $\bar{K}_{\Omegab}(u,v)=0$. 
		
		\item $\HO(u,v)=0$ if and only if $\bar{H}_{\Omegab}(u,v)=0$.
		
	\end{itemize}    
\end{prop}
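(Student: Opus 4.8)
The plan is to mirror the proof of Proposition \ref{change1}, replacing the reparametrization by the rigid motion: I will exhibit one convenient tangent moving base of $\bar{\x}$ whose relative curvatures are directly comparable to those of $\x$, and then invoke Proposition \ref{zeros} to pass to an arbitrary $\Omegamb$.

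First I would write the isometry in the form $\boldsymbol{\phi}(\p)=\rhom\p+T$ for $\p\in\R^3$, where $\rhom\in O(3)$ is orthogonal and $T\in\R^3$ is a translation vector, so that $\bar{\x}=\boldsymbol{\phi}\circ\x=\rhom\x+T$. Differentiating kills the translation, giving $D\bar{\x}=\rhom D\x=\rhom\Omegam\Lambdam^T$. Since $\rhom$ is invertible, the columns of $\hat{\Omegam}:=\rhom\Omegam$ remain linearly independent, and the decomposition $D\bar{\x}=\hat{\Omegam}\Lambdam^T$ with $rank(\hat{\Omegam})=2$ shows, by the remark following Proposition \ref{OD}, that $\hat{\Omegam}$ is a tangent moving base of $\bar{\x}$.

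Next I would compute the invariants attached to $\hat{\Omegam}$. Orthogonality gives $\I_{\hat{\Omega}}=\hat{\Omegam}^T\hat{\Omegam}=\Omegam^T\rhom^T\rhom\Omegam=\I_\Omega$ and $D\bar{\x}^T\hat{\Omegam}=D\x^T\rhom^T\rhom\Omegam=D\x^T\Omegam$, whence $\hat{\Lambdam}=D\bar{\x}^T\hat{\Omegam}(\I_{\hat{\Omega}})^{-1}=D\x^T\Omegam\,\I_\Omega^{-1}=\Lambdam$. The one delicate point is the induced normal: using the identity $\rhom\w_1\times\rhom\w_2=det(\rhom)\,\rhom(\w_1\times\w_2)$, valid for orthogonal $\rhom$, together with the fact that $\rhom$ preserves norms, one obtains $\hat{\n}=\epsilon\,\rhom\n$ with $\epsilon:=det(\rhom)=\pm1$. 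Consequently $\II_{\hat{\Omega}}=-\hat{\Omegam}^T D\hat{\n}=-\epsilon\,\Omegam^T D\n=\epsilon\,\II_\Omega$, and therefore $\hat{\mum}=-\II_{\hat{\Omega}}^T\I_{\hat{\Omega}}^{-1}=\epsilon\,\mum$.

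Plugging these in, $\bar{K}_{\hat{\Omega}}=det(\hat{\mum})=\epsilon^2 det(\mum)=\KO$ and $\bar{H}_{\hat{\Omega}}=-\frac{1}{2}tr(\hat{\mum}\,adj(\hat{\Lambdam}))=\epsilon\,\HO$. Since $\epsilon=\pm1\neq0$, the zero sets of $\bar{K}_{\hat{\Omega}}$ and $\bar{H}_{\hat{\Omega}}$ coincide with those of $\KO$ and $\HO$. Finally, Proposition \ref{zeros} guarantees that the zeros of the relative curvatures of $\bar{\x}$ are independent of the tangent moving base chosen, so $\bar{K}_{\Omegab}$ and $\bar{H}_{\Omegab}$ have the same zeros as $\bar{K}_{\hat{\Omega}}$ and $\bar{H}_{\hat{\Omega}}$, which yields both items. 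The only real obstacle is keeping track of the orientation sign $\epsilon$ coming from $det(\rhom)$; because the statement concerns only vanishing, this sign is harmless, and the argument is otherwise a direct transcription of the computations in Proposition \ref{change1}.
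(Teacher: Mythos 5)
Your proposal is correct and follows essentially the same route as the paper's own proof: write $\boldsymbol{\phi}(\p)=\rhom\p+T$, take $\hat{\Omegam}=\rhom\Omegam$ as a tangent moving base of $\bar{\x}$, compute $\I_{\hat{\Omega}}=\I_\Omega$, $\hat{\Lambdam}=\Lambdam$, $\hat{\mum}=\pm\mum$, and finish with Proposition \ref{zeros}. The sign bookkeeping via $\epsilon=det(\rhom)$ matches the paper's $\pm$ case distinction, so nothing further is needed.
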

\begin{proof}
	if $\boldsymbol{\phi}$ is an isometry, then we can write it in this form $\boldsymbol{\phi}(\p)=\mathbf{O}\p+\mathbf{a}$, where $\mathbf{O}\in \mathcal{M}_{3\times3}(\R)$ is an orthogonal matrix and $\mathbf{a}\in\R^3$ is a fixed vector. Thus, $\hat{\Omegam}:=\mathbf{O}\Omegam$ is a tangent moving base of $\bar{\x}$ and $\hat{\n}=\pm\mathbf{O}\n$ ($+$ if $det(\mathbf{O})=1$ and $-$ if $det(\mathbf{O})=-1$), then $\II_{\hat{\Omega}}=\pm(-\Omegam^T\mathbf{O}^T\mathbf{O}D\n)=\pm\II_{\Omega}$, $\I_{\hat{\Omega}}=\Omegam^T\mathbf{O}^T\mathbf{O}\Omegam=\I_{\Omega }$ and $\hat{\Lambdam}=\Lambdam$. Therefore, $\hat{\mum}=\pm\mum$ which implies $\KO=K_{\hat{\Omega}}$ and $\HO=\pm H_{\hat{\Omega}}$. By proposition \ref{zeros} it follows both items.       
\end{proof}
\begin{prop}\label{wfmatrix}
	Let $\x:U \to \R^3$ be a frontal and $\Omegam$ a tangent moving base of $\x$, then $\x$ is a front if and only if, 
	\begin{align}\label{wfm}
	\begin{pmatrix}
	\Lambdam^T\\
	\mum^T
	\end{pmatrix}
	\end{align} 
	has a $2\times2$ minor different of zero, for each $\p \in \Sigma(\x)$.
\end{prop}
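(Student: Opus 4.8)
The plan is to translate the front condition into a rank condition on the $4\times 2$ matrix (\ref{wfm}), using the two decompositions already in hand. Recall that $\x$ is a front exactly when the pair $(\x,\num)$ is an immersion at every point, i.e.\ when the stacked differential
$$\begin{pmatrix} D\x\\ D\n\end{pmatrix}:U\to\mathcal{M}_{6\times2}(\R)$$
has rank $2$ at each $\p$, where $\num=\n$ is the unit normal induced by $\Omegam$. The inputs I would use are $D\x=\Omegam\Lambdam^T$ and $D\n=\Omegam\mum^T$ (the latter being (\ref{WO})).

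First I would substitute these two decompositions into the stacked differential and factor out the frame, writing
$$\begin{pmatrix} D\x\\ D\n\end{pmatrix}=\begin{pmatrix}\Omegam & 0\\ 0 & \Omegam\end{pmatrix}\begin{pmatrix}\Lambdam^T\\ \mum^T\end{pmatrix}.$$
The left factor is a $6\times 4$ matrix of rank $4$, because $rank(\Omegam)=2$; hence it represents an injective linear map $\R^4\to\R^6$. Left multiplication by an injective map does not change the kernel of (\ref{wfm}) and therefore preserves its rank, so at each $\p$ one has
$$rank\begin{pmatrix} D\x\\ D\n\end{pmatrix}(\p)=rank\begin{pmatrix}\Lambdam^T\\ \mum^T\end{pmatrix}(\p).$$
Consequently $(\x,\num)$ is an immersion at $\p$ if and only if (\ref{wfm}) has rank $2$ at $\p$, which for a matrix with exactly two columns is equivalent to the nonvanishing of some $2\times2$ minor.

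It then remains to observe that this minor condition is automatic off the singular set, so that it only needs to be imposed on $\Sigma(\x)$. Indeed, on $\Sigma(\x)^c$ we have $\laO=det(\Lambdam)\neq 0$, so $\Lambdam$ is invertible and the top block $\Lambdam^T$ is already a nonzero $2\times2$ minor of (\ref{wfm}); thus the rank is $2$ there and $\x$ is immersive automatically, consistent with $\Sigma(\x)^c$ being the regular set. Combining the two cases, $\x$ is a front precisely when (\ref{wfm}) has a nonzero $2\times2$ minor at every $\p\in\Sigma(\x)$, which is the assertion.

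I expect the only delicate point to be the rank-preservation step: one must argue carefully that left multiplication by the block frame $\left(\begin{smallmatrix}\Omegam & 0\\ 0 & \Omegam\end{smallmatrix}\right)$, which has full column rank because $\Omegam$ does, leaves the rank of (\ref{wfm}) unchanged. Everything else is a direct unwinding of the definition of front together with the decompositions $D\x=\Omegam\Lambdam^T$ and $D\n=\Omegam\mum^T$.
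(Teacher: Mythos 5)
Your proposal is correct and follows essentially the same route as the paper: factoring $\left(\begin{smallmatrix} D\x\\ D\n\end{smallmatrix}\right)=\left(\begin{smallmatrix}\Omegam & \mathbf{0}\\ \mathbf{0} & \Omegam\end{smallmatrix}\right)\left(\begin{smallmatrix}\Lambdam^T\\ \mum^T\end{smallmatrix}\right)$ and using that the full-column-rank block frame preserves the rank. You merely make explicit two points the paper leaves implicit, namely the injectivity argument for rank preservation and the fact that the minor condition is automatic on $\Sigma(\x)^c$.
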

\begin{proof}
	let $\n$ be the normal vector field along $\x$. By definition, $\x$ is a front if and only if,
	\begin{align}
	2=rank(\begin{pmatrix}
	D\x\\
	D\n
	\end{pmatrix})=rank(\begin{pmatrix}
	\Omegam\Lambdam^T\\
	\Omegam\mum^T
	\end{pmatrix})=rank(\begin{pmatrix}
	\Omegam&\mathbf{0}\\
	\mathbf{0}&\Omegam
	\end{pmatrix}\begin{pmatrix}
	\Lambdam^T\\
	\mum^T
	\end{pmatrix})=rank(\begin{pmatrix}
	\Lambdam^T\\
	\mum^T
	\end{pmatrix})
	\nonumber
	\end{align}
	which is equivalent to have a $2\times2$ minor of the matrix (\ref{wfm}) different of zero.
\end{proof}
The propositions \ref{zeros}, \ref{change1} and \ref{change2} now allow us to explore in which point any of $\KO$ and $\HO$ turns zero making change of coordinates, applying isometries of $\R^3$ and switching tangent moving bases. In the following theorem the necessary condition of the first item  was proved in (\cite{luc},Proposition 2.4) identifying $\HO$ as a $C^\infty$ extension of $\laO H$ for fronts with singular set having empty interior. The problem to use that result here is that the extension of $\laO H$ depends on the existence of regular points dense in the domain. However this holds in general for all fronts without that condition and the converse as well.
\begin{teo}\label{wft}
	Let $\x:U \to \R^3$ be a frontal, $\Omegam$ a tangent moving base of $\x$ and  $\p \in \Sigma(\x)$. Then,
	\begin{itemize}
		\item $\x:U \to \R^3$ is a front on a neighborhood $V$ of $\p$ with $rank(D\x(\p))=1$ if and only if $H_\Omega(\p)\neq 0$. 
		
		\item $\x:U \to \R^3$ is a front on a neighborhood $V$ of $\p$ with $rank(D\x(\p))=0$ if and only if $H_\Omega(\p)=0$ and $\KO(\p)\neq0$.
		
	\end{itemize}
\end{teo}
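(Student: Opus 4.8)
The plan is to fix the singular point $\p$, reduce to an orthonormal moving base, and then read off everything from the six $2\times2$ minors of the matrix in Proposition \ref{wfmatrix}. Since being a front and the number $rank(D\x(\p))=rank(\Lambdam(\p))$ depend only on $\x$, while by Proposition \ref{zeros} the vanishing of $\KO$ and of $\HO$ is independent of the chosen tangent moving base, after a Gram--Schmidt orthonormalization I may assume $\I_\Omega=\id$. Writing $\Lambdam=(\lambda_{ij})$, $\mum=(\mu_{ij})$ at $\p$, I let $p_{ij}$ be the $2\times2$ minor of $\begin{pmatrix}\Lambdam^T\\ \mum^T\end{pmatrix}$ on rows $i<j$ (rows $1,2$ from $\Lambdam^T$, rows $3,4$ from $\mum^T$). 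A direct computation gives $p_{12}=\det\Lambdam=\laO$, $p_{34}=\det\mum=\KO$ and $p_{14}-p_{23}=-2\HO$; moreover $\p\in\Sigma(\x)$ is exactly $p_{12}=0$, so $rank(\Lambdam(\p))\in\{0,1\}$. By Proposition \ref{wfmatrix} together with continuity of the minors, $\x$ is a front on a neighborhood of $\p$ if and only if the matrix has rank $2$ at $\p$, i.e. some $p_{ij}(\p)\neq0$.

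The decisive step is to extract two relations among the minors. Because $\I_\Omega=\id$ we have $\II=\Lambdam\II_\Omega=-\Lambdam\mum^T$, and $\II$ is symmetric by (\ref{FF2}); comparing off-diagonal entries gives $p_{13}+p_{24}=0$. On the other hand the six minors of any $4\times2$ matrix satisfy the Pl\"ucker relation $p_{12}p_{34}-p_{13}p_{24}+p_{14}p_{23}=0$ (a one-line expansion). Inserting $p_{12}=0$ and $p_{24}=-p_{13}$ yields the crucial identity
\begin{equation*}
p_{14}\,p_{23}=p_{13}p_{24}=-p_{13}^{2}\le 0 .
\end{equation*}
From this I would deduce the central lemma: $\HO(\p)=0$ if and only if $p_{13}=p_{14}=p_{23}=p_{24}=0$. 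Indeed, if $\HO(\p)=0$ then $p_{14}=p_{23}$, so $p_{14}^2=-p_{13}^2\le0$ forces all four to vanish; the converse is immediate from $p_{14}-p_{23}=-2\HO$.

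Both items then follow by bookkeeping on which minors survive. For the first item, if $\HO(\p)\neq0$ then one of the mixed minors $p_{14},p_{23}$ is nonzero, so the matrix has rank $2$ (a front near $\p$) and a row of $\Lambdam^T$ is nonzero, whence $rank(D\x(\p))=1$. Conversely, if $\x$ is a front with $rank(D\x(\p))=1$ but $\HO(\p)=0$, the lemma kills the four mixed minors, so rank $2$ forces $p_{34}=\KO\neq0$; then the two rows of $\mum^T$ are independent while $p_{13}=p_{14}=0$, $p_{23}=p_{24}=0$ make each row of $\Lambdam^T$ proportional to both of them, forcing $\Lambdam(\p)=0$, a contradiction. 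For the second item, $rank(D\x(\p))=0$ means $\Lambdam(\p)=0$, which kills all mixed minors (so $\HO(\p)=0$ by the lemma) and leaves $p_{34}=\KO$ as the only possibly nonzero minor, so being a front is equivalent to $\KO(\p)\neq0$; the converse uses the same independence argument to force $\Lambdam(\p)=0$ from $p_{34}\neq0$ and vanishing mixed minors.

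The main obstacle is precisely the second half of the first item: rank $2$ of the front matrix does \emph{not} by itself give $\HO(\p)\neq0$ (one can write down $\Lambdam,\mum$ with $\det\Lambdam=0$, $\HO=0$ and the matrix of rank $2$). What rules this out is the symmetry relation $p_{13}+p_{24}=0$, which combined with Pl\"ucker produces $p_{14}p_{23}\le0$ and hence the clean dichotomy ``$\HO=0\Leftrightarrow$ all four mixed minors vanish''. Passing to an orthonormal base is the technical device that puts the symmetry of $\II$ into the usable form $p_{13}+p_{24}=0$; without it one must drag the weight $\I_\Omega$ through every minor, which is why I would make that reduction at the outset.
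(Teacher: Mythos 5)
Your proof is correct, and for the delicate direction of the first item (front of corank one at $\p$ implies $\HO(\p)\neq0$) it takes a genuinely different route from the paper. The paper gets there by normalizing: it composes with a diffeomorphism and an ambient isometry (Propositions \ref{change1} and \ref{change2}) to reach $\bar{\x}=(u,b,c)$ carrying a tangent moving base in the form of Remark \ref{specialbase}, extracts $g_1(0,0)=g_{1v}(0,0)=0$ from the relation of Corollary \ref{D2c}, and then reads $\bar{H}_{\Omegab}(0,0)\neq0$ off the one surviving minor of Proposition \ref{wfmatrix}. You instead stay at the single point $\p$ and argue with the six minors $p_{ij}$ of the $4\times2$ matrix of Proposition \ref{wfmatrix}: the identities $p_{12}=\laO$, $p_{34}=\KO$ and $p_{14}-p_{23}=-2\HO$ are the same ones the paper uses (implicitly) in its converse direction, but your two additional inputs --- the symmetry relation $p_{13}+p_{24}=0$, which is just the symmetry of $\II=\Lambdam\II_\Omega$ from Theorem \ref{D} made usable by the orthonormal reduction that Proposition \ref{zeros} legitimizes, and the three-term Pl\"ucker relation $p_{12}p_{34}-p_{13}p_{24}+p_{14}p_{23}=0$ --- yield the dichotomy ``$\HO(\p)=0$ if and only if all four mixed minors vanish,'' after which both items are pure linear algebra. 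Your version buys a shorter, coordinate-free argument that dispenses with the normal form and with Propositions \ref{change1} and \ref{change2}, and it isolates exactly why a rank-two front matrix cannot have $\HO=0$ at a corank-one point; the paper's computation, in exchange, exhibits $\HO$ concretely as $-\tfrac{1}{2}\bigl(-g_2\det(\I_{\Omegab})^{-\frac{1}{2}}\bigr)_v$ at such a point, which is geometric information your argument does not produce. I checked the minor identities, the Pl\"ucker expansion and the sign argument $p_{14}p_{23}=-p_{13}^{2}\le0$; they are all correct, as is the reduction to an orthonormal base, since being a front, $rank(D\x)=rank(\Lambdam)$, and the vanishing of $\KO$ and $\HO$ are all independent of the tangent moving base chosen.
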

\begin{proof}
	For the first item, we can apply a change of coordinates $\mathbf{h}$ and an isometry $\boldsymbol{\phi}$ of $\R^3$ (making the line $D\x(\p)(\R^2)$ parallel to $(1,0,0)$) such that $\bar{\x}=\boldsymbol{\phi}\circ\x\circ\mathbf{h}=(u,b(u,v),c(u,v))$, $\mathbf{h}(0,0)=\p$, $b_u(0,0)=b_v(0,0)=c_u(0,0)=0$ and having a tangent moving base $\Omegamb$ in the form of remark \ref{specialbase}. Thus, $D\bar{\x}=\Omegamb\Lambdamb^T$, $\Lambdamb^T=D(u,b)$, $\bar{\mum}^T=D(-g_1det(\I_{\Omegab})^{-\frac{1}{2}},-g_2det(\I_{\Omegab})^{-\frac{1}{2}})$ and $(u,b)_u\cdot(g_1,g_2)_v=(u,b)_v\cdot(g_1,g_2)_u$ (by corollary \ref{D2c}). Hence, $c_u=g_1+g_2b_u$ and $g_{1v}+b_ug_{2v}=b_vg_{2u}$ which implies that $g_1(0,0)=g_{1v}(0,0)=0$. Since $\bar{\x}$ is wave front locally at $(0,0)$, by proposition \ref{wfmatrix} the matrix
	$$\begin{pmatrix}
	D(u,b)\\
	\bar{\mum}^T
	\end{pmatrix}$$
	has a minor $2\times2$ different of zero at $(0,0)$ and therefore $(-g_2det(\I_{\Omegab})^{-\frac{1}{2}})_v (0,0)\neq 0$. On the other hand a simple computation using the definition leads to $\bar{H}_{\Omegab}(0,0)=-\frac{1}{2}(-g_2det(\I_{\Omegab})^{-\frac{1}{2}})_v (0,0)\neq 0$, hence $\HO(\p)\neq 0$. Now, if we suppose that $H_\Omega(\p)\neq 0$, as $H_\Omega(\p)=-\frac{1}{2}(\lambda_{22}\mu_{11}-\lambda_{21}\mu_{12}+\lambda_{11}\mu_{22}-\lambda_{12}\mu_{21})(\p)$, then $(\lambda_{12}\mu_{21}-\lambda_{22}\mu_{11})(\p)\neq0$ or $(\lambda_{11}\mu_{22}-\lambda_{21}\mu_{12})(\p)\neq0$, which are two $2\times2$ minors of (\ref{wfm}) and also $\Lambdam(\p)\neq\mathbf{0}$. Thus, $rank(D\x(\p))=rank(\Lambdam(\p))=1$ and there exists a neighborhood $V$ of $\p$, where any of these two $2\times2$ minors is different of zero, therefore by proposition \ref{wft} $\x$ is a front on $V$. For the second item, if $\x$ is a front and $rank(D\x(\p))=rank(\Lambdam(\p))=0$, then $\Lambdam(\p)=\mathbf{0}$, $H_\Omega(\p)=0$ and by proposition \ref{wft} $\KO(\p)=det(\mum^T)\neq0$. Now, if $\KO(\p)\neq0$ and $H_\Omega(\p)=0$, there exist a neighborhood $V$ of $\p$ where $\KO\neq0$ and by proposition \ref{wft} $\x$ is a front on $V$. By the first item, $rank(D\x(\p))\neq1$ because $H_\Omega(\p)=0$, then $rank(D\x(\p))=0$.        
\end{proof}
From theorem \ref{wft} and proposition \ref{zeros} follows immediately the following corollary. 
\begin{coro}\label{wf}
	Let $\x:U \to \R^3$ be a frontal, this is a front if and only if, $(\KO,H_\Omega)\neq\mathbf{0}$ on $\Sigma(\x)$ for whatever tangent moving base $\Omegam$ of $\x$.
\end{coro}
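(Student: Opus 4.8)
The plan is to read the corollary off directly from Theorem \ref{wft} and Proposition \ref{zeros}; the only real work is the logical organisation of the two implications and a case split according to the rank of $D\x$ at each singular point.

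First I would dispose of the clause ``for whatever tangent moving base''. By Proposition \ref{zeros} the zero sets of $\KO$ and $H_\Omega$ on $\Sigma(\x)$ are independent of the chosen tangent moving base; hence the condition ``$(\KO,H_\Omega)\neq\mathbf{0}$ on $\Sigma(\x)$'' holds either for every tangent moving base of $\x$ or for none. It therefore suffices to fix one tangent moving base $\Omegam$ and establish the equivalence for it.

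For the forward implication, suppose $\x$ is a front and fix $\p\in\Sigma(\x)$, so that $rank(D\x(\p))\in\{0,1\}$. If $rank(D\x(\p))=1$, then $\x$ is in particular a front on a neighborhood of $\p$ with $rank(D\x(\p))=1$, and the first item of Theorem \ref{wft} forces $H_\Omega(\p)\neq0$; if instead $rank(D\x(\p))=0$, the second item forces $\KO(\p)\neq0$. In either case $(\KO,H_\Omega)(\p)\neq\mathbf{0}$, and as $\p$ was arbitrary we obtain $(\KO,H_\Omega)\neq\mathbf{0}$ on $\Sigma(\x)$.

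For the converse, assume $(\KO,H_\Omega)\neq\mathbf{0}$ on $\Sigma(\x)$ and fix $\p\in\Sigma(\x)$. The alternatives $H_\Omega(\p)\neq0$ and $H_\Omega(\p)=0$ (in which case necessarily $\KO(\p)\neq0$) are exhaustive and match exactly the hypotheses of the two items of Theorem \ref{wft}, so in both cases $\x$ is a front on some neighborhood of $\p$. At every regular point $D\x$ already has rank $2$, so $(\x,\num)$ is automatically an immersion there and $\x$ is a front near each point of $\Sigma(\x)^c$ as well; covering $U$ by these neighborhoods shows $\x$ is a front. I do not anticipate a genuine obstacle here: all the analytic content sits inside Theorem \ref{wft}, and what remains is purely the bookkeeping of the rank-$0$/rank-$1$ dichotomy together with the base-independence supplied by Proposition \ref{zeros}.
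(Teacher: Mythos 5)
Your proof is correct and is exactly the argument the paper intends: the corollary is stated there as following immediately from Theorem \ref{wft} and Proposition \ref{zeros}, and your write-up simply makes explicit the base-independence reduction, the rank-$0$/rank-$1$ case split at singular points, and the triviality at regular points. No gaps.
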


\begin{ex}
	Let $\x:\R^2 \to \R^3$ defined by $\x(u,v):=(u^2,v^2,v^3+u^3)$, this is a frontal with $rank(D\x(\p))=0$ on $\p=(0,0)$ (Figure \ref{corank2b}).
	\begin{figure}[h]
		\begin{center}
			\includegraphics[scale=0.45]{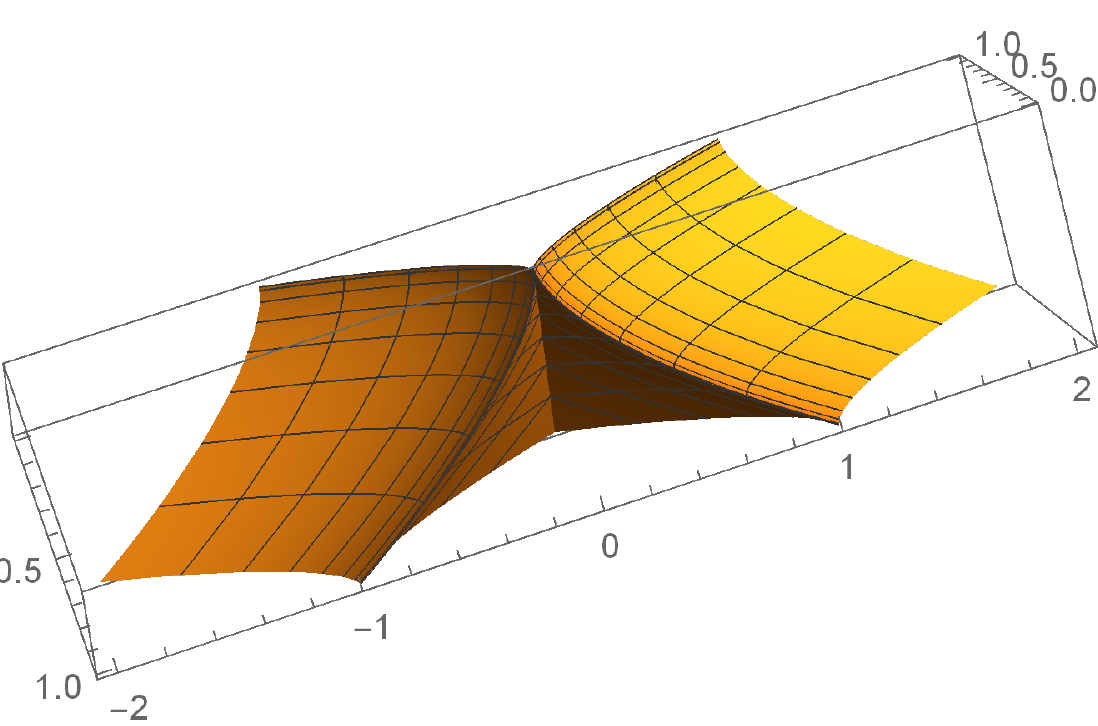}\qquad\qquad \includegraphics[scale=0.45]{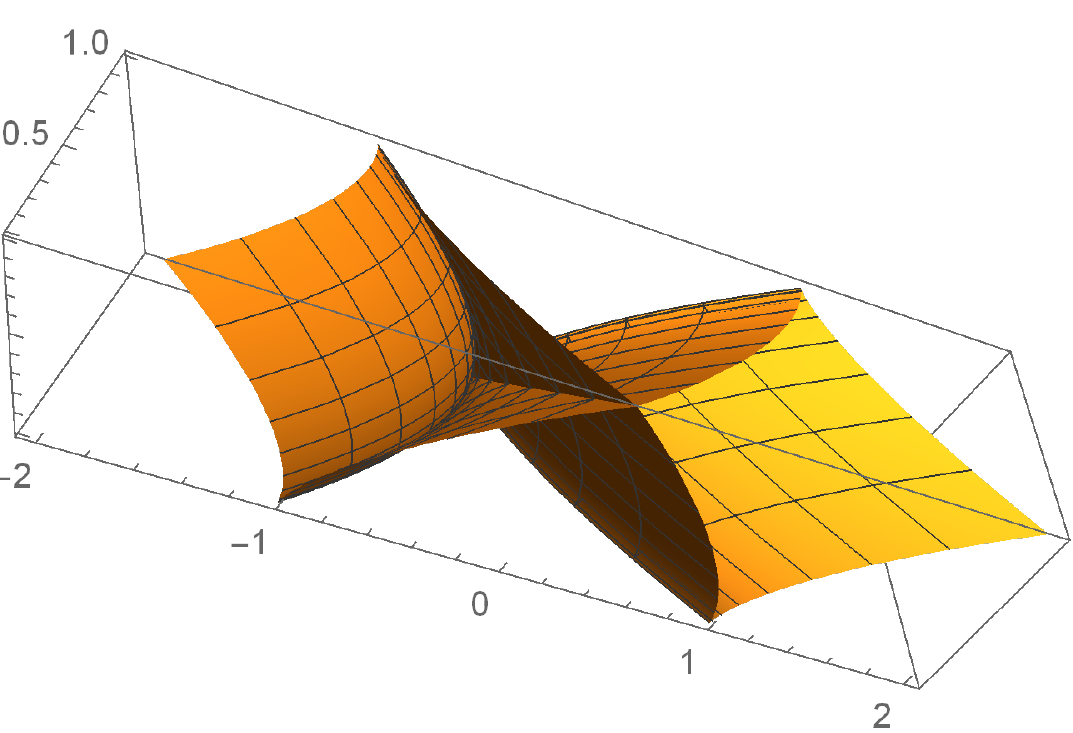}
		\end{center}
		\caption{A front with $rank(D\x(0,0))=0$.}\label{corank2b}
	\end{figure}
	We have the decomposition:
	\label{Corank2Db}
	\begin{align}
	D\x=\begin{pmatrix}
	2&0\\
	0&2\\
	3u&3v
	\end{pmatrix}\begin{pmatrix}
	u&0\\
	0&v
	\end{pmatrix}=\Omegam\Lambdam^T,\text{ where } \Omegam=\begin{pmatrix}
	2&0\\
	0&2\\
	3u&3v
	\end{pmatrix},\Lambdam=\begin{pmatrix}
	u&0\\
	0&v
	\end{pmatrix},\nonumber
	\end{align} being $\Omegam$ a tangent moving base of $\x$, then we have $\n=(-6u,-6v,4)\epsilon^{-\frac{1}{2}}$, $\w_{1u}=(0,0,3)$, $\w_{1v}=(0,0,0)$, $\w_{2u}=(0,0,0)$ and $\w_{2v}=(0,0,3)$. Thus 
	\begin{subequations}
		\begin{align}
		\I_\Omega=\begin{pmatrix}
		4+9u^2 & 9uv\\
		9uv & 4+9v^2
		\end{pmatrix}, \II_\Omega=\begin{pmatrix}
		12\epsilon^{-\frac{1}{2}} & 0\\
		0 & 12\epsilon^{-\frac{1}{2}}
		\end{pmatrix}\nonumber
		\end{align}
	\end{subequations}
	where $\epsilon=36u^2+36v^2+16$. Also, $\KO(u,v)=144(36u^2+36v^2+16)^{-2}\neq0$ and $H_\Omega(0,0)=0$, then by corollary \ref{wf}, $\x$ is a front.	
\end{ex}

\begin{ex}
	Let $\x:\R^2 \to \R^3$ defined by $\x(u,v):=(ue^u,v^2,(\frac{u^2}{2}+u)v^3)$, this is a frontal with $rank(D\x(\p))=0$ on $\p=(-1,0)$ (Figure \ref{corank2}). 
	\begin{figure}[h]
		\begin{center}
			\includegraphics[scale=0.31]{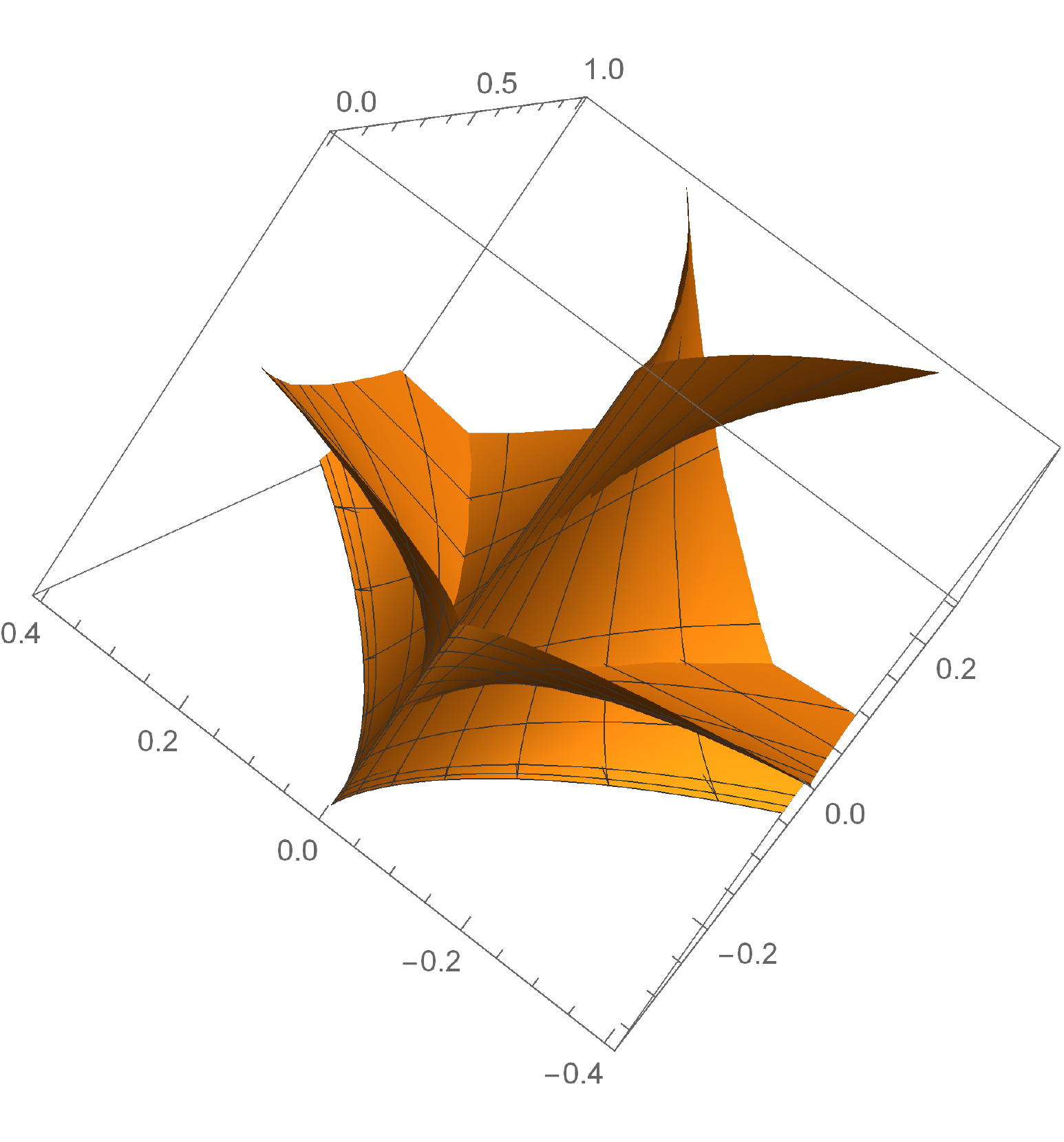}\qquad\qquad \includegraphics[scale=0.29]{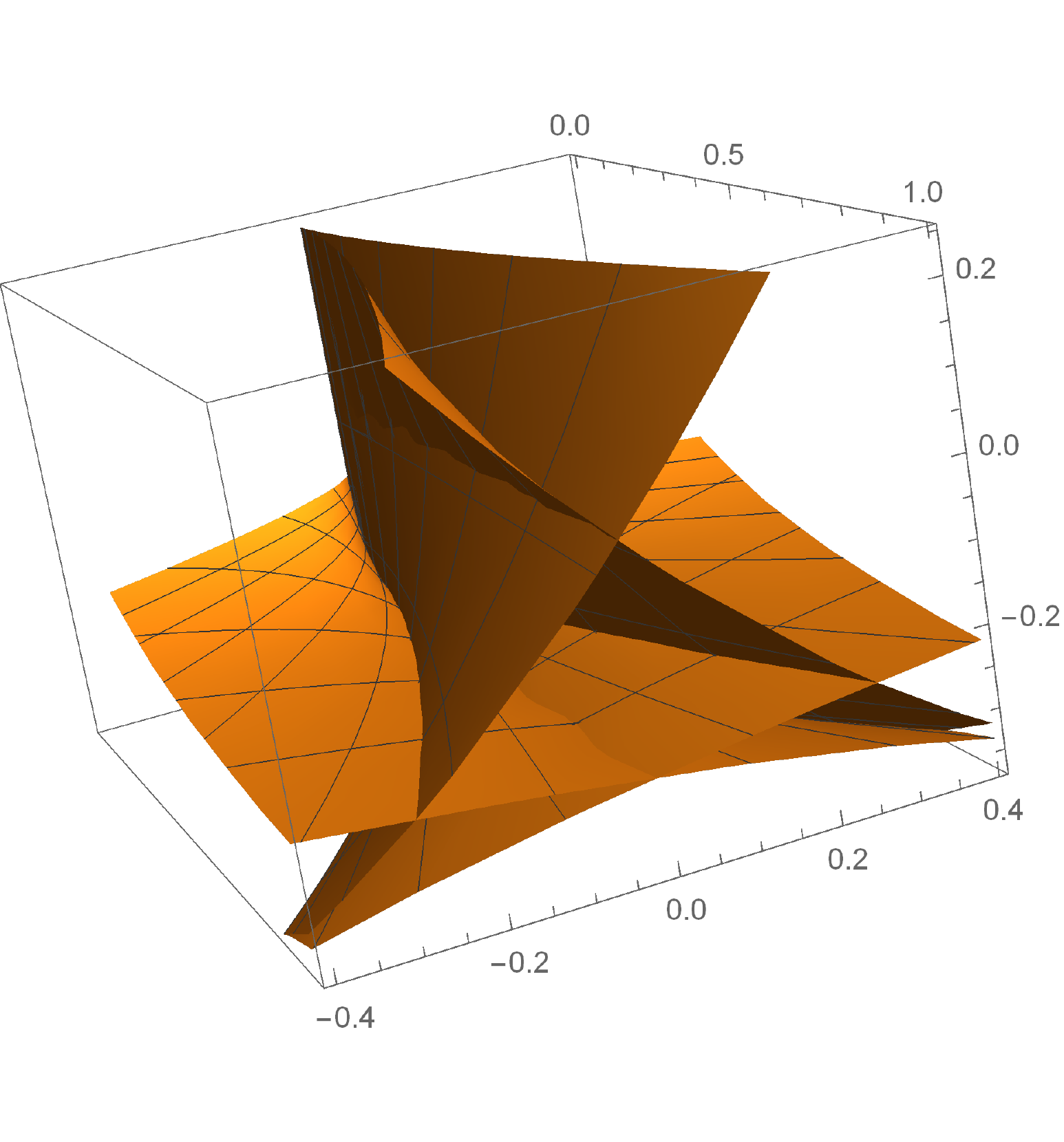}
		\end{center}
		\caption{A frontal with $rank(D\x(-1,0))=0$.}\label{corank2}
	\end{figure}
	We have the decomposition:
	\label{Corank2D}
	\begin{align}
	D\x=\begin{pmatrix}
	e^u&0\\
	0&2\\
	v^3&(\frac{u^2}{2}+u)3v
	\end{pmatrix}\begin{pmatrix}
	1+u&0\\
	0&v
	\end{pmatrix}=\Omegam\Lambdam^T,\nonumber\\
	\text{ where } \Omegam=\begin{pmatrix}
	e^u&0\\
	0&2\\
	v^3&(\frac{u^2}{2}+u)3v
	\end{pmatrix}, \Lambdam=\begin{pmatrix}
	1+u&0\\
	0&v
	\end{pmatrix},\nonumber
    \end{align}
    being $\Omegam$ a tangent moving base of $\x$, then we have
	$\n=(-2v^3,-e^u(\frac{u^2}{2}+u)3v,2e^u)\delta^{-\frac{1}{2}}$, $\w_{1u}=(e^u,0,0)$, $\w_{1v}=(0,0,3v^2)$, $\w_{2u}=(0,0,(u+1)3v)$ and $\w_{2v}=(0,0,3(\frac{u^2}{2}+u))$. Thus  
	\begin{subequations}
		\begin{align}
		\I_\Omega=\begin{pmatrix}
		e^{2u}+v^6 & 3(\frac{u^2}{2}+u)v^4\\
		3(\frac{u^2}{2}+u)v^4 & 4+9(\frac{u^2}{2}+u)^2v^2
		\end{pmatrix}, 
		\II_\Omega=\begin{pmatrix}
		-2v^3e^u & 6e^uv^2\\
		6(1+u)e^uv & 6e^u(\frac{u^2}{2}+u)
		\end{pmatrix}\delta^{-\frac{1}{2}}\nonumber
		\end{align}
	\end{subequations}
	where $\delta=4v^6+e^{2u}(9(\frac{u^2}{2}+u)^2v^2+4)$. Also, $\KO(-1,0)=0$ and $H_\Omega(-1,0)=0$, then by corollary \ref{wf}, $\x$ is not a front.
\end{ex}
\section{The Compatibility Equations}

Let $\Omegam=\begin{pmatrix}\w_1&\w_2\end{pmatrix}:U \to \mathcal{M}_{3\times2}(\R)$ be a moving base and $\n=\frac{\w_1\times\w_2}{\|\w_1\times\w_2\|}$. We have that ${\w_1, \w_2, \n}$ is a base of $\R^3$, then there are real functions $(p_{ij})$ and $(q_{ij})$  defined in $U$, $i \in \{1, 2, 3\}$ such that:
\begin{subequations}
	\begin{align}
	\w_{1u}&=p_{11}\w_1+p_{12}\w_2+p_{13}\n\label{co1}
	\\
	\w_{2u}&=p_{21}\w_1+p_{22}\w_2+p_{23}\n\label{co2}
	\\
	\n_{u}&=p_{31}\w_1+p_{32}\w_2+p_{33}\n\label{co3}
	\\
	\w_{1v}&=q_{11}\w_1+q_{12}\w_2+q_{13}\n\label{co4}
	\\
	\w_{2v}&=q_{21}\w_1+q_{22}\w_2+q_{23}\n\label{co5}
	\\
	\n_{v}&=q_{31}\w_1+q_{32}\w_2+q_{33}\n\label{co6}
	\end{align}	
\end{subequations}
If we set the matrix $\Wm:=\begin{pmatrix}\w_1&\w_2&\n\end{pmatrix}\in GL(3)$ whose columns are $\w_{1}$, $\w_{2}$ and $\n$. Also, denoting by $\P:=(p_{ij})$ and $\Q:=(q_{ij})$, we have:  
\begin{subequations}
\label{Wd}
\begin{align}
\Wm_u=\Wm\P^T\\ 
\Wm_v=\Wm\Q^T
\end{align}
\end{subequations}
which is equivalent to:
\begin{subequations}
	\label{SP2}
	\begin{align}
	\Wm^T_u=\P\Wm^T\label{SP2a}\\ 
	\Wm^T_v=\Q\Wm^T\label{SP2b}
	\end{align}
\end{subequations}
then, we have that $\P=\Wm^T_u(\Wm^T)^{-1}=\Wm^T_u\Wm\Wm^{-1}(\Wm^T)^{-1}=\Wm^T_u\Wm(\Wm^T\Wm)^{-1}$ and $\Q=\Wm^T_v(\Wm^T)^{-1}=\Wm^T_v\Wm\Wm^{-1}(\Wm^T)^{-1}=\Wm^T_v\Wm(\Wm^T\Wm)^{-1}$. Considering $\Wm=\begin{pmatrix}\Omegam&\n \end{pmatrix}$ as a block matrix, we have:
\begin{align}
\P&=\Wm^T_u\Wm(\Wm^T\Wm)^{-1}=\begin{pmatrix}\Omegam_u^T\\
\n_u^T\end{pmatrix}\begin{pmatrix}\Omegam&\n \end{pmatrix}(\begin{pmatrix}\Omegam^T\\
\n^T\end{pmatrix}\begin{pmatrix}\Omegam&\n \end{pmatrix})^{-1}\\
&=\begin{pmatrix}\Omegam_u^T\Omegam&\Omegam_u^T\n\\
\n_u^T\Omegam&\n_u^T\n \end{pmatrix}\begin{pmatrix}\Omegam^T\Omegam&\Omegam^T\n\\
\n^T\Omegam&\n^T\n \end{pmatrix}^{-1}=\begin{pmatrix}\Omegam_u^T\Omegam&\Omegam_u^T\n\\
\n_u^T\Omegam&0 \end{pmatrix}\begin{pmatrix}\I_\Omega&\mathbf{0}\\
\mathbf{0}&1 \end{pmatrix}^{-1}\nonumber\\
&=\begin{pmatrix}\Omegam_u^T\Omegam&\Omegam_u^T\n\\
\n_u^T\Omegam&0 \end{pmatrix}\begin{pmatrix}\I_\Omega^{-1}&\mathbf{0}\\
\mathbf{0}&1 \end{pmatrix}=\begin{pmatrix}\Omegam_u^T\Omegam\I_\Omega^{-1}&\Omegam_u^T\n\\
\n_u^T\Omegam\I_\Omega^{-1}&0 \end{pmatrix}\nonumber
\end{align}
from (\ref{WO}), we have $\n_u^T=\mum_{(1)}^T\Omegam^T$ and $\n_v^T=\mum_{(2)}^T\Omegam^T$. Then,
\begin{align}
\P=\begin{pmatrix}\Omegam_u^T\Omegam\I_\Omega^{-1}&\Omegam_u^T\n\\
\n_u^T\Omegam\I_\Omega^{-1}&0 \end{pmatrix}=\begin{pmatrix}\TA{1}&\Omegam_u^T\n\\
\mum_{(1)}^T\Omegam^T\Omegam\I_\Omega^{-1}&0 \end{pmatrix}=\begin{pmatrix}\TA{1}&\Omegam_u^T\n\\
\mum_{(1)}^T&0 \end{pmatrix}
\end{align}
Finally, using (\ref{SFO}) and by analogy with the same procedure for $\Q$, we get:
\begin{align}
\P=\begin{pmatrix}
\Ta{1}{1}{1} & \Ta{1}{2}{1} & \eo \\	
\Ta{2}{1}{1} & \Ta{2}{2}{1} & \fdo \\
\mu_{11} & \mu_{12} & 0
\end{pmatrix}\label{P}\\ 
\Q=\begin{pmatrix}
\Ta{1}{1}{2} & \Ta{1}{2}{2} & \fuo \\
\Ta{2}{1}{2} & \Ta{2}{2}{2} & \go \\
\mu_{21}     &   \mu_{22}   & 0\end{pmatrix}\label{Q}
\end{align}
now, as $\Wm^T_{uv}=\Wm^T_{vu}$, then $\P_v\Wm^T+\P\Wm_v^T=\Q_u\Wm^T+\Q\Wm_u^T$. Using (\ref{SP2a}) and (\ref{SP2b}) in the last equality, $\P_v\Wm^T+\P\Q\Wm^T=\Q_u\Wm^T+\Q\P\Wm^T$, then $(\P_v-\Q_u+\P\Q-\Q\P)\Wm^T=0$ and finally we get:  
\begin{align}
\P_v-\Q_u + [\P,\Q]=0\label{comp2}
\end{align}
which is the compatibility condition of the system (\ref{SP2}) by corollary \ref{frobl}. 

Using (\ref{P}) and (\ref{Q}) to compute each component $(i,j)$ of (\ref{comp2}) we obtain the following equations that we call the {\it $\Omega$-relative compatibility equations (RCE)}:
\begin{subequations}
	\label{estructure}
	\begin{align}
	&(1,1)  \quad (\Ta{1}{1}{1})_v-(\Ta{1}{1}{2})_u=\Ta{1}{1}{2}\Ta{1}{1}{1}-\Ta{1}{1}{1}\Ta{1}{1}{2}+\Ta{1}{2}{2}\Ta{2}{1}{1}-\Ta{2}{1}{2}\Ta{1}{2}{1}+\mu_{11}\fuo-\mu_{21}\eo\label{c1}\\
	&(1,2) \quad (\Ta{1}{2}{1})_v-(\Ta{1}{2}{2})_u=\Ta{1}{1}{2}\Ta{1}{2}{1}+\Ta{1}{2}{2}\Ta{2}{2}{1}-\Ta{1}{1}{1}\Ta{1}{2}{2}-\Ta{1}{2}{1}\Ta{2}{2}{2}+\mu_{12}\fuo-\mu_{22}\eo\label{c2}\\
	&(2,1) \quad (\Ta{2}{1}{1})_v-(\Ta{2}{1}{2})_u=\Ta{2}{1}{2}\Ta{1}{1}{1}+\Ta{2}{2}{2}\Ta{2}{1}{1}-\Ta{2}{1}{1}\Ta{1}{1}{2}-\Ta{2}{2}{1}\Ta{2}{1}{2}+\mu_{11}\go-\mu_{21}\fdo\label{c3}\\
	&(2,2) \quad (\Ta{2}{2}{1})_v-(\Ta{2}{2}{2})_u=\Ta{2}{2}{2}\Ta{2}{2}{1}-\Ta{2}{2}{1}\Ta{2}{2}{2}+\Ta{1}{2}{1}\Ta{2}{1}{2}-\Ta{1}{2}{2}\Ta{2}{1}{1}+\mu_{12}\go-\mu_{22}\fdo\label{c4}\\
	&(1,3) \quad \mu_{11v}-\mu_{21u}=\Ta{1}{1}{1}\mu_{21}+\Ta{2}{1}{1}\mu_{22}-\Ta{1}{1}{2}\mu_{11}-\Ta{2}{1}{2}\mu_{12}\label{c5}\\
	&(2,3) \quad \mu_{12v}-\mu_{22u}=\Ta{1}{2}{1}\mu_{21}+\Ta{2}{2}{1}\mu_{22}-\Ta{1}{2}{2}\mu_{11}-\Ta{2}{2}{2}\mu_{12}\label{c6}\\ 
	&(3,1) \quad (\eo)_v-(\fuo)_u=\eo\Ta{1}{1}{2}+\fdo\Ta{1}{2}{2}-\fuo\Ta{1}{1}{1}-\go\Ta{1}{2}{1}\label{c7}\\
	&(3,2) \quad (\fdo)_v-(\go)_u=\eo\Ta{2}{1}{2}+\fdo\Ta{2}{2}{2}-\fuo\Ta{2}{1}{1}-\go\Ta{2}{2}{1}\label{c8}\\
	&(3,3) \quad \eo\mu_{21}+\fdo\mu_{22}-\fuo\mu_{11}-\go\mu_{12}=0\label{c9}  
	\end{align}
\end{subequations}
Using that the $\Omega$-relative curvature $\KO=det(\boldsymbol{\mu})=\frac{det(\II_\Omega)}{det(\I_\Omega)}=\frac{\eo\go-\fuo\fdo}{\EO\GO-\FO^2}$ and $\boldsymbol{\mu}=-\II_\Omega^T\I_\Omega^{-1}$ in (\ref{c2}) we get: 
\begin{align}
(\Ta{1}{2}{2})_u-(\Ta{1}{2}{1})_v+\Ta{1}{1}{2}\Ta{1}{2}{1}+\Ta{1}{2}{2}\Ta{2}{2}{1}-\Ta{1}{2}{1}\Ta{2}{2}{2}-\Ta{1}{1}{1}\Ta{1}{2}{2}=-\EO \KO\label{GaussT}.
\end{align}
On the other hand, let $\x:U \to \R^3$ be a frontal and $\Omegam=\begin{pmatrix}\w_1&\w_2\end{pmatrix}$ a tangent moving base of $\x$. Then, $D\x=\Omegam\Lambdam^T$ and we have that,
\begin{subequations}
	\begin{align}
	\x_u=\lambda_{11}\w_1+\lambda_{12}\w_2\\
	\x_v=\lambda_{21}\w_1+\lambda_{22}\w_2
	\label{DOS}
	\end{align}
\end{subequations}
where $\Lambdam=(\lambda_{ij})$. Setting,
$$\Lambdamb:=\begin{pmatrix}
\lambda_{11} & \lambda_{12} & 0 \\
\lambda_{21} & \lambda_{22} & 0 \\
0 & 0 & 1\end{pmatrix}, \ \X:=\begin{pmatrix}
D\x&\n\end{pmatrix},$$ 
we have $\X=\Wm\Lambdamb^T$, where $\Wm=\begin{pmatrix}\Omegam&\n\end{pmatrix}$. Denoting $\im,\jm,\km$ the canonical base of $\R^3$, the compatibility condition $\x_{uv}=\x_{vu}$ is equivalent to
\begin{align}
	\X_u\jm=\X_v\im\label{C1},
\end{align}
using (\ref{Wd}) with (\ref{C1}) we have
\begin{align}
\Wm\P^T\Lambdamb^T\jm+\Wm\Lambdamb_u^T\jm=\Wm_u\Lambdamb^T\jm+\Wm\Lambdamb_u^T\jm=\Wm_v\Lambdamb^T\im+\Wm\Lambdamb_v^T\im=\Wm\Q^T\Lambdamb^T\im+\Wm\Lambdamb_v^T\im\label{C2},
\end{align} 
then (\ref{C1}) is equivalent to
\begin{align}
\P^T\Lambdamb^T\jm+\Lambdamb_u^T\jm=\Q^T\Lambdamb^T\im+\Lambdamb_v^T\im\label{C3}.
\end{align} 
Computing each component of \ref{C3}, we get the following equations that we call {\it singular compatibility equations (SCE)}:
\begin{subequations}
	\label{CS}
	\begin{align}
	&\lambda_{11v}-\lambda_{21u}=\Ta{1}{1}{1}\lambda_{21}+\Ta{2}{1}{1}\lambda_{22}-\Ta{1}{1}{2}\lambda_{11}-\Ta{2}{1}{2}\lambda_{12}\label{CS1}\\
	&\lambda_{12v}-\lambda_{22u}=\Ta{1}{2}{1}\lambda_{21}+\Ta{2}{2}{1}\lambda_{22}-\Ta{1}{2}{2}\lambda_{11}-\Ta{2}{2}{2}\lambda_{12}\label{CS2}\\
	&\lambda_{11}\fuo+\lambda_{12}\go=\lambda_{21}\eo+\lambda_{22}\fdo\label{CS3}
	\end{align}
\end{subequations}
If we set the matrices:
$$\eu:=\begin{pmatrix}
1\\
0
\end{pmatrix}, \ed:=\begin{pmatrix}
0\\
1
\end{pmatrix},$$
we can write all these equations with a very useful compact notation.\\ 
Equations (\ref{CS1}) and (\ref{CS2}): 
\begin{align}
\ed^T(\Lambdam\TA{1}+\Lambdam_u)=\eu^T(\Lambdam\TA{2}+\Lambdam_v)\label{cc1}.
\end{align} 
Equation (\ref{CS3}): 
\begin{align}
\Lambdam_{(1)}\II_\Omega^{(2)}=\Lambdam_{(2)}\II_\Omega^{(1)}\label{cc2},
\end{align} 
that is, $\Lambdam\II_\Omega$ is symmetric.\\
Equations (\ref{c1}), (\ref{c2}), (\ref{c3}) and (\ref{c4}):
\begin{align}
\TA{1v}-\TA{2u}+\TA{1}\TA{2}-\TA{2}\TA{1}+\II_\Omega^{(1)}\ed^T\mum-\II_\Omega^{(2)}\eu^T\mum=0\label{cc3}.
\end{align} 
Equations (\ref{c5}) and (\ref{c6}):
\begin{align}
\ed^T(\mum\TA{1}+\mum_u)=\eu^T(\mum\TA{2}+\mum_v)\label{cc4}.
\end{align}
Equations (\ref{c7}) and (\ref{c8}):
\begin{align}
\ed^T(\II_\Omega^T\TA{1}^T-\II_{\Omega u})=\eu^T(\II_\Omega^T\TA{2}^T-\II_{\Omega v})\label{cc5}.
\end{align}
Equation (\ref{c9}): 
\begin{align}
\mum_{(1)}\II_\Omega^{(2)}=\mum_{(2)}\II_\Omega^{(1)}\label{cc6},
\end{align} 
that is, $\mum\II_\Omega$ is symmetric.
\begin{prop}
Let $\I_\Omega, \II_\Omega, \TA{1}, \TA{2}:U \to \mathcal{M}_{2\times2}(\R)$ be arbitrary smooth maps and $\I_\Omega$ symmetric positive definite. If we set $\mum=-\II_\Omega^T\I_\Omega^{-1}$, $\KO=det(\mum)$ and we have that
\begin{align}
\I_\Omega\TA{1}^T+\TA{1}\I_\Omega=\I_{\Omega u},\label{gu}\\
\I_\Omega\TA{2}^T+\TA{2}\I_\Omega=\I_{\Omega v},\label{gv}
\end{align}
 denoting 
 $$\I_\Omega=\begin{pmatrix}
 \EO&\FO\\
 \FO&\GO
 \end{pmatrix},\II_\Omega=\begin{pmatrix}
 \eo&\fuo\\
 \fdo&\go
 \end{pmatrix},$$
 then the equation (\ref{cc3}) is satisfied if and only if, the equation (\ref{GaussT}) is satisfied. 
\end{prop}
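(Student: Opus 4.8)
The plan is to read equation (\ref{cc3}) as a single $2\times2$ matrix identity $\mathbf{R}=\mathbf{0}$ and to use the compatibility hypotheses (\ref{gu}), (\ref{gv}) together with $\mum=-\II_\Omega^T\I_\Omega^{-1}$ to collapse its four scalar entries into one. Write $\mathbf{R}:=\mathbf{F}+\mathbf{G}$ with curvature part $\mathbf{F}:=\TA{1v}-\TA{2u}+\TA{1}\TA{2}-\TA{2}\TA{1}$ and $\mathbf{G}:=\II_\Omega^{(1)}\ed^T\mum-\II_\Omega^{(2)}\eu^T\mum$. By construction the entries of $\mathbf{R}=\mathbf{0}$ are exactly equations (\ref{c1})--(\ref{c4}), and the $(1,2)$-entry is (\ref{c2}), which the computation preceding the proposition already rewrites as (\ref{GaussT}) by means of $\mum=-\II_\Omega^T\I_\Omega^{-1}$ and $\KO=det(\mum)$. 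Hence it suffices to show that, under (\ref{gu}) and (\ref{gv}), the identity $\mathbf{R}=\mathbf{0}$ is equivalent to the vanishing of its $(1,2)$-entry alone.

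The decisive step, and the one I expect to be the main obstacle, is to prove that $\mathbf{F}\I_\Omega$ is skew-symmetric. For this I would differentiate (\ref{gu}) with respect to $v$ and (\ref{gv}) with respect to $u$, subtract the resulting identities, and then re-substitute $\I_{\Omega u}=\TA{1}\I_\Omega+\I_\Omega\TA{1}^T$ and $\I_{\Omega v}=\TA{2}\I_\Omega+\I_\Omega\TA{2}^T$. The terms carrying a single derivative of $\I_\Omega$ cancel in pairs, and the remaining quadratic terms reorganize precisely into $\mathbf{F}\I_\Omega+\I_\Omega\mathbf{F}^T=\mathbf{0}$; that is, $\mathbf{F}\I_\Omega$ is skew. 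This is the only place where the metric-compatibility relations are genuinely used, and the care lies in tracking the four products $\TA{i}\I_\Omega\TA{j}^T$ so that the cross terms annihilate.

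The part $\mathbf{G}\I_\Omega$ is skew for a purely algebraic reason. Since $\I_\Omega$ is symmetric, $\mum\I_\Omega=-\II_\Omega^T$, so the $i$-th row satisfies $\mum_{(i)}\I_\Omega=-(\II_\Omega^{(i)})^T$. Substituting into $\mathbf{G}=\II_\Omega^{(1)}\mum_{(2)}-\II_\Omega^{(2)}\mum_{(1)}$ gives $\mathbf{G}\I_\Omega=\II_\Omega^{(2)}(\II_\Omega^{(1)})^T-\II_\Omega^{(1)}(\II_\Omega^{(2)})^T$, which has the form $X^T-X$ and is therefore skew-symmetric. Adding the two facts, $\mathbf{R}\I_\Omega$ is skew-symmetric.

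To finish, every $2\times2$ skew-symmetric matrix is $\begin{pmatrix}0&-s\\s&0\end{pmatrix}$ for a scalar function $s$, so $\mathbf{R}=\begin{pmatrix}0&-s\\s&0\end{pmatrix}\I_\Omega^{-1}$ and each entry of $\mathbf{R}$ is a multiple of $s$. Because $\I_\Omega$ is positive definite we have $\EO>0$ and $det(\I_\Omega)>0$, and expanding the product shows the $(1,2)$-entry of $\mathbf{R}$ equals $-s\,\EO/det(\I_\Omega)$. Thus the vanishing of that single entry forces $s=0$ and hence $\mathbf{R}=\mathbf{0}$, while $\mathbf{R}=\mathbf{0}$ trivially returns it. Since the $(1,2)$-entry of $\mathbf{R}=\mathbf{0}$ is equation (\ref{c2}), equivalent to (\ref{GaussT}), this establishes that (\ref{cc3}) holds if and only if (\ref{GaussT}) does.
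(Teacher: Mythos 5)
Your proposal is correct and follows essentially the same route as the paper: both arguments multiply (\ref{cc3}) on the right by $\I_\Omega$, use the differentiated compatibility relations (\ref{gu}), (\ref{gv}) to show the curvature part becomes skew-symmetric, observe the $\mum$--$\II_\Omega$ part is skew for algebraic reasons, and then reduce everything to the single $(1,2)$-entry via $\EO>0$ and $\det(\I_\Omega)>0$.
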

\begin{proof}
	The equation (\ref{cc3}) is satisfied if and only if, the resulting equation of multiplying this by the right side with $\I_\Omega$ is satisfied
	\begin{align}
	\TA{1v}\I_\Omega-\TA{2u}\I_\Omega+\TA{1}\TA{2}\I_\Omega-\TA{2}\TA{1}\I_\Omega-\II_\Omega^{(1)}\II_\Omega^{(2)T}+\II_\Omega^{(2)}\II_\Omega^{(1)T}=0.\label{ge}
	\end{align}
	observe that $-\II_\Omega^{(1)}\II_\Omega^{(2)T}+\II_\Omega^{(2)}\II_\Omega^{(1)T}$ is skew-symmetric. Let us set,
	\begin{align}
	\mathbf{A}:=\TA{1v}\I_\Omega-\TA{2u}\I_\Omega+\TA{1}\TA{2}\I_\Omega-\TA{2}\TA{1}\I_\Omega,\label{ga}\\ \mathbf{B}=\begin{pmatrix}
	0 &-b\\
	b & 0
	\end{pmatrix}:=-\II_\Omega^{(1)}\II_\Omega^{(2)T}+\II_\Omega^{(2)}\II_\Omega^{(1)T},
	\end{align}
	we have that
	\begin{align}
	-\mathbf{A}^T=-\I_\Omega\TA{1v}^T+\I_\Omega\TA{2u}^T-\I_\Omega\TA{2}^T\TA{1}^T+\I_\Omega\TA{1}^T\TA{2}^T.\label{gat}
	\end{align}
	On the other hand, deriving (\ref{gu}) by $v$, (\ref{gv}) by $u$ and subtracting the results we get:
	\begin{align}
	\I_\Omega\TA{2u}^T-\I_\Omega\TA{1v}^T-\TA{1}\I_{\Omega v}+\TA{2}\I_{\Omega u}=\TA{1v}\I_\Omega-\TA{2u}\I_\Omega-\I_{\Omega u}\TA{2}^T+\I_{\Omega v}\TA{1}^T.\label{gs}
	\end{align}
	Substituting in (\ref{gs}), $\I_{\Omega u}$ and $\I_{\Omega u}$ by (\ref{gu}) and (\ref{gv}), we obtain canceling similar terms that, the right side of (\ref{ga}) is equal to the right side of (\ref{gat}). Then, $\mathbf{A}$ is skew-symmetric having the form
	$$\mathbf{A}=\begin{pmatrix}
	0 &-a\\
	a & 0
	\end{pmatrix},$$
	hence (\ref{ge}) is satisfied if and only if, $a+b$=0 as also, (\ref{cc3}) can be expressed in this form,
	\begin{align}
	\begin{pmatrix}
	0 &-a-b\\
	a+b & 0
	\end{pmatrix}\I_\Omega^{-1}=\mathbf{0}.\label{gf}
	\end{align} Computing the component $(1,2)$ of (\ref{gf}) we have $\EO(-a-b)\det(\I_\Omega)^{-1}=0$, then $a+b=0$ if and only if, the component $(1,2)$ of (\ref{cc3}) is satisfied, which is the equation (\ref{c2}) that is simplified to (\ref{GaussT}).         
\end{proof}
\begin{prop}
	Let $\I_\Omega, \II_\Omega, \TA{1}, \TA{2}:U \to \mathcal{M}_{2\times2}(\R)$ be arbitrary smooth maps and $\I_\Omega$ symmetric non-singular. If we set $\mum:=-\II_\Omega^T\I_\Omega^{-1}$ and we have that
	\begin{align}
	\I_\Omega\TA{1}^T+\TA{1}\I_\Omega=\I_{\Omega u},\label{equiv1}\\
	\I_\Omega\TA{2}^T+\TA{2}\I_\Omega=\I_{\Omega v},\label{equiv2}
	\end{align}
	then, the equation (\ref{cc5}) is satisfied if and only if, the equation (\ref{cc4}) is satisfied.
\end{prop}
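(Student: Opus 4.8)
The plan is to pass from \eqref{cc4} to \eqref{cc5} by right-multiplying by the non-singular matrix $\I_\Omega$, which turns the equivalence into a single reversible matrix identity, exactly in the spirit of the companion proposition relating \eqref{cc3} and \eqref{GaussT}. Because $\I_\Omega$ is invertible, right-multiplication by it is an equivalence of equations; hence it suffices to prove the two identities
\begin{align}
(\mum\TA{1}+\mum_u)\I_\Omega=\II_\Omega^T\TA{1}^T-(\II_{\Omega u})^T,\quad (\mum\TA{2}+\mum_v)\I_\Omega=\II_\Omega^T\TA{2}^T-(\II_{\Omega v})^T\nonumber
\end{align}
after which \eqref{cc4} right-multiplied by $\I_\Omega$ becomes term by term \eqref{cc5}, and conversely.

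To establish the first identity I would differentiate the hypothesis $\mum=-\II_\Omega^T\I_\Omega^{-1}$ in the cleared form $\mum\I_\Omega=-\II_\Omega^T$. Differentiating in $u$ gives $\mum_u\I_\Omega=-(\II_{\Omega u})^T-\mum\I_{\Omega u}$, so that
\begin{align}
(\mum\TA{1}+\mum_u)\I_\Omega=\mum\bigl(\TA{1}\I_\Omega-\I_{\Omega u}\bigr)-(\II_{\Omega u})^T.\nonumber
\end{align}
Now the integrability condition \eqref{equiv1}, written as $\I_{\Omega u}=\TA{1}\I_\Omega+\I_\Omega\TA{1}^T$, is precisely what is needed: it yields $\TA{1}\I_\Omega-\I_{\Omega u}=-\I_\Omega\TA{1}^T$, so the parenthesis collapses and, using $\mum\I_\Omega=-\II_\Omega^T$ once more, the right-hand side becomes $\II_\Omega^T\TA{1}^T-(\II_{\Omega u})^T$. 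The same computation in $v$, using \eqref{equiv2}, gives the second identity.

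With both identities in hand, right-multiplying \eqref{cc4} by $\I_\Omega$ transforms its two sides exactly into those of \eqref{cc5}; since $\I_\Omega$ is non-singular every step is reversible, so the two equations are equivalent and both directions are obtained at once. I expect the only delicate point to be the transpose bookkeeping: one must differentiate $\mum=-\II_\Omega^T\I_\Omega^{-1}$ carefully and verify that the terms carrying $\TA{1}$ and $\TA{2}$ cancel against $\I_{\Omega u}$ and $\I_{\Omega v}$ through \eqref{equiv1}--\eqref{equiv2}, which is exactly where --- and the only place where --- the two hypotheses enter the argument.
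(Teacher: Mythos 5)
Your proposal is correct and takes essentially the same route as the paper: substitute $\mum=-\II_\Omega^T\I_\Omega^{-1}$ into (\ref{cc4}), right-multiply by the non-singular $\I_\Omega$, and use (\ref{equiv1})--(\ref{equiv2}) to collapse $\TA{i}\I_\Omega-\I_{\Omega u}$ (resp.\ $\I_{\Omega v}$) into $-\I_\Omega\TA{i}^T$, with your use of the cleared form $\mum\I_\Omega=-\II_\Omega^T$ being only a cosmetic simplification of the paper's differentiation of $\I_\Omega^{-1}$. Your transpose bookkeeping is right: both computations land on $\II_\Omega^T\TA{i}^T-\II_{\Omega u}^T$ (resp.\ $-\II_{\Omega v}^T$), which indicates that the derivative terms in the displayed equation (\ref{cc5}) should carry transposes.
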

\begin{proof}
	Using that $\mum=-\II_\Omega^T\I_\Omega^{-1}$, we substitute $\mum$, $\mum_u$ and $\mum_v$ in (\ref{cc4}), we get $$\ed^T(-\II_\Omega^T\I_\Omega^{-1}\TA{1}-\II_{\Omega u}^T\I_{\Omega}^{-1}-\II_\Omega^T(\I_{\Omega }^{-1})_u)=\eu^T(-\II_\Omega^T\I_\Omega^{-1}\TA{2}-\II_{\Omega v}^T\I_{\Omega}^{-1}-\II_\Omega^T(\I_{\Omega }^{-1})_v)$$
	that is satisfied if and only if, the resulting equation of multiply this by the right side with $\I_\Omega$ is satisfied, in which we can later substitute $(\I_{\Omega}^{-1})_u \I_\Omega=-\I_\Omega^{-1}\I_{\Omega u}$, $(\I_{\Omega}^{-1})_v \I_\Omega=-\I_\Omega^{-1}\I_{\Omega v}$, factorize similar terms in both sides and get
	$$\ed^T(\II_\Omega^T\I_\Omega^{-1}(\I_{\Omega u}-\TA{1}\I_\Omega)-\II_{\Omega u}^T)=\eu^T(\II_\Omega^T\I_\Omega^{-1}(\I_{\Omega v}-\TA{2}\I_\Omega)-\II_{\Omega v}^T).$$
	Since $\I_{\Omega u}-\TA{1}\I_\Omega=\I_\Omega\TA{1}^T$ and $\I_{\Omega v}-\TA{2}\I_\Omega=\I_\Omega\TA{2}^T$ by hypothesis, subtituting these, the equation becomes in (\ref{cc5}). 
\end{proof}
\begin{rem}
	Since that equation (\ref{cc6}) is always satisfied by definition of $\mum$ and as every frontal satisfy (\ref{equiv1}) and (\ref{equiv2}) (proposition \ref{propE}), by these two last proposition (RCE) are equivalent to (\ref{GaussT}), (\ref{c7}) and (\ref{c8}).  
\end{rem}
\section{The Fundamental Theorem}

\begin{teo}\label{TF}
	
	Let $E,F,G,e,f,g$ smooth functions defined in an open set $U\subset \R^2$, with $E\geq0$, $G\geq0$ and $EG-F^2\geq0$. Assume that the given functions have the following decomposition:
	
	\begin{subequations}
		\label{FFS}
		\begin{align}
		\begin{pmatrix}
		E&F\\
		F&G
		\end{pmatrix}&=\begin{pmatrix}
		\la{1}{1} & \la{1}{2}\\
		\la{2}{1} & \la{2}{2}
		\end{pmatrix}\begin{pmatrix}
		\EO&\FO\\
		\FO&\GO
		\end{pmatrix}\begin{pmatrix}
		\la{1}{1} & \la{1}{2}\\
		\la{2}{1} & \la{2}{2}
		\end{pmatrix}^T\label{FFS1}
		\\ 
		\begin{pmatrix}
		e&f\\
		f&g
		\end{pmatrix}&=\begin{pmatrix}
		\la{1}{1} & \la{1}{2}\\
		\la{2}{1} & \la{2}{2}
		\end{pmatrix}\begin{pmatrix}
		\eo&\fuo\\
		\fdo&\go
		\end{pmatrix}\label{FFS2}
		\end{align}
	\end{subequations}	
	in which all the components are smooth real functions defined in $U$, $\EO>0$, $\GO>0$, $\EO\GO-\FO^2>0$, $\laO^{-1}(0)$ has empty interior and
	\begin{subequations}
		\begin{align}
		\Lambdam_{(1)u}^{\phantomsection}\begin{pmatrix}
		\EO&\FO\\
		\FO&\GO
		\end{pmatrix}\Lambdam_{(2)}^T-\Lambdam_{(1)}^{\phantomsection}\begin{pmatrix}
		\EO&\FO\\
		\FO&\GO
		\end{pmatrix}\Lambdam_{(2)u}^T+E_v-F_u \in \mathfrak{T}_\Omega\\
		\Lambdam_{(1)v}^{\phantomsection}\begin{pmatrix}
		\EO&\FO\\
		\FO&\GO
		\end{pmatrix}\Lambdam_{(2)}^T-\Lambdam_{(1)}^{\phantomsection}\begin{pmatrix}
		\EO&\FO\\
		\FO&\GO
		\end{pmatrix}\Lambdam_{(2)v}^T+F_v-G_u \in \mathfrak{T}_\Omega,
		\end{align}
	\end{subequations}
where $\Lambdam=(\lambda_{ij})$, $\laO=det(\Lambdam)$ and $\mathfrak{T}_\Omega$ is the principal ideal generated by $\laO$ in the ring $C^\infty(U,\R)$.
$E,F,G,e,f,g$ formally satisfy the Gauss and Mainardi-Codazzi equations for all $(u,v)\in U-\laO^{-1}(0)$. Then, for each $(u_0,v_0)\in U$ there exists a neighborhood $V\subset U$ of $(u_0,v_0)$ and a frontal $\x:V \to \x(V)\subset \R^3$ with a tangent moving base $\Omegam$ such that $D\x=\Omegam\Lambdam^T$, $$\I_\Omega=\begin{pmatrix}
\EO&\FO\\
\FO&\GO
\end{pmatrix}, \ \II_\Omega=\begin{pmatrix}
\eo&\fuo\\
\fdo&\go
\end{pmatrix}$$
and the frontal $\x$ has $E,F,G$ and $e,f,g$ as coefficients of the first and second fundamental forms, respectively. Furthermore, if $U$ is connected and if 
$${\mathbf{\bar{\x}}}:U \to \R^3\text{ and } \bar{\Omegam}:U \to \R^3$$
are another frontal and a tangent moving base satisfying the same conditions, then there exist a translation $\mathbf{T}$ and a proper linear orthogonal transformation $\rhom$ in $\R^3$ such that $\Omegamb=\rhom\Omegam$ and ${\mathbf{\bar{\x}}}=\mathbf{T}\circ \rhom \circ \x$.    
\end{teo}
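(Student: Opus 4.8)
The plan is to realize the prescribed data by two successive Frobenius integrations: first build a frame $\Wm=\begin{pmatrix}\Omegam & \n\end{pmatrix}$ whose associated matrices are exactly the prescribed $\I_\Omega$ and $\II_\Omega$, and then integrate the exact-differential condition $D\x=\Omegam\Lambdam^{T}$ to recover the frontal. Throughout, $\Sigma:=\laO^{-1}(0)$ has empty interior, so $U\setminus\Sigma$ is dense and any identity verified there extends to $U$ by continuity. Since $\I_\Omega$ is symmetric positive definite and the two membership hypotheses are exactly \eqref{cli1}--\eqref{cli2}, Proposition \ref{metric} produces unique smooth $\TA{1},\TA{2}$ on $U$ extending $\Lambdam^{-1}(\GA{1}\Lambdam-\Lambdam_u)$ and $\Lambdam^{-1}(\GA{2}\Lambdam-\Lambdam_v)$, where $\GA{1},\GA{2}$ are defined from $E,F,G$ by \eqref{GA1}--\eqref{GA2} on $U\setminus\Sigma$; by Remark \ref{tau} these $\TA{i}$ satisfy the relations of Proposition \ref{propE}. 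Setting $\mum:=-\II_\Omega^{T}\I_\Omega^{-1}$ I define $\P,\Q$ by \eqref{P}--\eqref{Q}. The Frobenius compatibility \eqref{comp2} is, componentwise, the system \eqref{c1}--\eqref{c9}; since \eqref{c9} holds by definition of $\mum$, the two propositions and closing remark of Section 4 (using Remark \ref{tau}) reduce \eqref{comp2} to the three relative equations \eqref{GaussT}, \eqref{c7}, \eqref{c8}. On $U\setminus\Sigma$, where $\Lambdam$ is invertible, the substitutions $\GA{i}=(\Lambdam\TA{i}+\Lambdam_{x_i})\Lambdam^{-1}$ (Remark \ref{rem1}), $\II=\Lambdam\II_\Omega$ (from \eqref{FFS2}), and $\KO=\laO K$ (Proposition \ref{lim}) convert these into the classical Gauss \eqref{Gauss} and Mainardi--Codazzi \eqref{MC} equations for $E,F,G,e,f,g$, which hold by hypothesis; hence \eqref{comp2} holds on $U\setminus\Sigma$, and then on all of $U$ by density. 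Corollary \ref{frobl} now yields a unique smooth $\Wm:V\to GL(3)$ solving $\Wm^{T}_u=\P\Wm^{T}$, $\Wm^{T}_v=\Q\Wm^{T}$ with any chosen initial value $\Wm(u_0,v_0)=\Wm_0$.

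Next I must check that this $\Wm$ is a genuine frame realizing the prescribed forms. Put $\mathbf{G}_0:=\begin{pmatrix}\I_\Omega & \mathbf{0}\\ \mathbf{0} & 1\end{pmatrix}$. A direct computation, using Proposition \ref{propE} (for the top-left block) and $\mum=-\II_\Omega^{T}\I_\Omega^{-1}$ (for the off-diagonal blocks), gives $\P\mathbf{G}_0+\mathbf{G}_0\P^{T}=\mathbf{G}_{0u}$ and $\Q\mathbf{G}_0+\mathbf{G}_0\Q^{T}=\mathbf{G}_{0v}$; that is, $\mathbf{G}_0$ solves the same linear system as $\mathbf{M}:=\Wm^{T}\Wm$, which satisfies $\mathbf{M}_u=\P\mathbf{M}+\mathbf{M}\P^{T}$ and $\mathbf{M}_v=\Q\mathbf{M}+\mathbf{M}\Q^{T}$. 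Choosing $\Wm_0$ with $\Wm_0^{T}\Wm_0=\mathbf{G}_0(u_0,v_0)$ and $\det\Wm_0>0$ (possible since $\mathbf{G}_0(u_0,v_0)$ is positive definite), uniqueness of solutions forces $\mathbf{M}=\mathbf{G}_0$ on $V$. Thus the first two columns $\Omegam$ of $\Wm$ satisfy $\Omegam^{T}\Omegam=\I_\Omega$, the third column $\n$ is a unit vector orthogonal to them, and $\det\Wm=(\w_1\times\w_2)\cdot\n>0$ throughout (continuity on connected $V$), so $\n=\frac{\w_1\times\w_2}{\|\w_1\times\w_2\|}$; moreover the $\n$-rows of the system read $D\n=\Omegam\mum^{T}$, whence $-\Omegam^{T}D\n=-\I_\Omega\mum^{T}=\II_\Omega$, so both prescribed matrices are realized.

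To recover $\x$, note that the singular compatibility equations \eqref{CS1}--\eqref{CS3} are met: \eqref{CS3} is the symmetry of $\Lambdam\II_\Omega=\begin{pmatrix}e&f\\f&g\end{pmatrix}$, while \eqref{CS1}--\eqref{CS2}, rewritten as \eqref{cc1}, reduce on $U\setminus\Sigma$ (via $\Lambdam\TA{i}+\Lambdam_{x_i}=\GA{i}\Lambdam$) to $\ed^{T}\GA{1}=\eu^{T}\GA{2}$, which is immediate from \eqref{GA1}--\eqref{GA2}, and then extend by density. Hence the two columns of $\Omegam\Lambdam^{T}$ satisfy $(\Omegam\Lambdam^{T})^{(1)}_v=(\Omegam\Lambdam^{T})^{(2)}_u$, and Theorem \ref{frobg} provides $\x:V\to\R^3$ with $D\x=\Omegam\Lambdam^{T}$ and $\x(u_0,v_0)=\p$. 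By Proposition \ref{OD}, $\x$ is a frontal with tangent moving base $\Omegam$, and $\I=D\x^{T}D\x=\Lambdam\I_\Omega\Lambdam^{T}=\begin{pmatrix}E&F\\F&G\end{pmatrix}$ and $\II=-D\x^{T}D\n=\Lambdam\II_\Omega=\begin{pmatrix}e&f\\f&g\end{pmatrix}$ by \eqref{FFS1}--\eqref{FFS2}, as required.

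For uniqueness, let $\bar{\x},\Omegamb$ satisfy the same conditions. The Christoffel symbols depend only on $E,F,G$, so $\bar{\GA{i}}=\GA{i}$, and with the common $\Lambdam$ Remark \ref{rem1} gives $\TAb{i}=\TA{i}$; hence $\bar{\x}$ produces the same $\P,\Q,\mum$ and the same $\mathbf{G}_0$. The frames $\Wm$ and $\bar{\Wm}$ then solve one and the same linear system, which is invariant under left multiplication by a constant matrix. Setting $\rhom:=\bar{\Wm}(u_0,v_0)\Wm(u_0,v_0)^{-1}$, the common Gram value $\mathbf{G}_0(u_0,v_0)$ forces $\rhom^{T}\rhom=\id$ and the equal orientations $\det\Wm,\det\bar{\Wm}>0$ force $\det\rhom=1$, so $\rhom$ is proper orthogonal. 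Uniqueness gives $\bar{\Wm}=\rhom\Wm$, hence $\Omegamb=\rhom\Omegam$ and $D\bar{\x}=\rhom\,D\x$; as $U$ is connected, $\bar{\x}-\rhom\x$ is a constant vector $\mathbf{a}$, i.e. $\bar{\x}=\mathbf{T}\circ\rhom\circ\x$ with $\mathbf{T}$ translation by $\mathbf{a}$. The step I expect to be the main obstacle is the verification of \eqref{comp2}: transporting the classical Gauss and Mainardi--Codazzi equations, assumed only on the regular set $U\setminus\Sigma$, into the relative equations \eqref{GaussT}, \eqref{c7}, \eqref{c8} through the $\Lambdam$-conjugation of Sections 3--4, and then propagating them across $\Sigma$ by density; the remaining arguments are organized entirely by Corollary \ref{frobl} and the decomposition results already established.
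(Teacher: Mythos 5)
Your proposal is correct and follows essentially the same route as the paper's proof: Proposition \ref{metric} to produce $\TA{1},\TA{2}$, the frame system $\Wm^T_u=\P\Wm^T$, $\Wm^T_v=\Q\Wm^T$ integrated by Corollary \ref{frobl}, the Gram-matrix Frobenius argument to force $\Wm^T\Wm=\Ib_\Omega$, the singular compatibility equations to integrate $D\x=\Omegam\Lambdam^T$, and the same rotation-plus-connectedness rigidity argument. The one step you flag as the main obstacle --- transporting Gauss and Mainardi--Codazzi from $U\setminus\laO^{-1}(0)$ to the matrix identity $\TAb{1v}-\TAb{2u}+[\TAb{1},\TAb{2}]=0$ on $U$ --- is exactly what the paper's Lemma \ref{l1} packages as the single conjugation identity $\Lambdamb(\TAb{1v}-\TAb{2u}+[\TAb{1},\TAb{2}])=(\GAb{1v}-\GAb{2u}+[\GAb{1},\GAb{2}])\Lambdamb$ together with the classical equivalence of Gauss--Codazzi with $\GAb{1v}-\GAb{2u}+[\GAb{1},\GAb{2}]=0$, so your asserted reduction is legitimate and your direct block verification of $\P\mathbf{G}_0+\mathbf{G}_0\P^T=\mathbf{G}_{0u}$ is an acceptable substitute for the paper's Lemma \ref{lema2}.
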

\begin{lem}\label{l1}
	If we have:
	\begin{subequations}
		\begin{align}
		\GAb{1}\Lambdamb-\Lambdamb_u=\Lambdamb\TAb{1}\label{f1}\\
		\GAb{2}\Lambdamb-\Lambdamb_v=\Lambdamb\TAb{2}\label{f2}
		\end{align}
	\end{subequations}
in which $\Lambdamb,\TAb{1},\TAb{2}:U \to \mathcal{M}_{n\times n}(\R)$ and $\GAb{1},\GAb{2}:U-det^{-1}(\Lambdamb)(0) \to \mathcal{M}_{n\times n}(\R)$ are smooth maps with $int(det^{-1}(\Lambdamb)(0))=\emptyset$. Then,\\
	
	$\GAb{1v}-\GAb{2u}+[\GAb{1},\GAb{2}]=0$ is equivalent to $\TAb{1v}-\TAb{2u}+[\TAb{1},\TAb{2}]=0$ in $U$. 
\end{lem}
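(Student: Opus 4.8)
The plan is to work first on the dense open set $U_0:=U-\det^{-1}(\Lambdamb)(0)$, where $\Lambdamb$ is invertible, and to recognize \eqref{f1}--\eqref{f2} as a \emph{gauge-transformation} relation between the two connection-type pairs $(\GAb{1},\GAb{2})$ and $(\TAb{1},\TAb{2})$. Solving \eqref{f1} and \eqref{f2} for $\TAb{i}$ on $U_0$ gives
\begin{align*}
\TAb{1}=\Lambdamb^{-1}(\GAb{1}\Lambdamb-\Lambdamb_u),\qquad \TAb{2}=\Lambdamb^{-1}(\GAb{2}\Lambdamb-\Lambdamb_v).
\end{align*}
Write $R:=\GAb{1v}-\GAb{2u}+[\GAb{1},\GAb{2}]$ (defined on $U_0$) and $S:=\TAb{1v}-\TAb{2u}+[\TAb{1},\TAb{2}]$ (defined on all of $U$). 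The target is the conjugation identity $S=\Lambdamb^{-1}R\,\Lambdamb$ on $U_0$.

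First I would establish this identity by direct differentiation. Using the product rule, the formula $(\Lambdamb^{-1})_u=-\Lambdamb^{-1}\Lambdamb_u\Lambdamb^{-1}$ together with its $v$-analogue, and the equality of mixed partials $\Lambdamb_{uv}=\Lambdamb_{vu}$, one expands $\TAb{1v}$, $\TAb{2u}$ and the bracket $[\TAb{1},\TAb{2}]$. In $\TAb{1v}-\TAb{2u}$ the two second-derivative terms cancel, and the remaining terms pair off exactly against the corresponding terms produced by $[\TAb{1},\TAb{2}]$; after these cancellations the only surviving contribution is $\Lambdamb^{-1}(\GAb{1v}-\GAb{2u}+[\GAb{1},\GAb{2}])\Lambdamb=\Lambdamb^{-1}R\,\Lambdamb$. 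This bookkeeping is the main, and essentially the only, labor in the lemma: it is routine but the terms must be tracked carefully, and it is precisely here that the smoothness of $\Lambdamb$ and $\TAb{i}$ and the differentiability of $\GAb{i}$ on $U_0$ are used.

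Finally I would upgrade the pointwise equivalence on $U_0$ to a global equivalence on $U$ by density. Since $\Lambdamb$ is invertible on $U_0$, the identity $S=\Lambdamb^{-1}R\,\Lambdamb$ shows that $S$ vanishes on $U_0$ if and only if $R$ does. If $R=0$ on $U_0$, then $S$ vanishes on $U_0$, and because $\mathrm{int}(\det^{-1}(\Lambdamb)(0))=\emptyset$ the set $U_0$ is dense in $U$; as $S$ is continuous, indeed smooth, on all of $U$, it must vanish on all of $U$. Conversely, if $S=0$ on $U$ then in particular $S=0$ on $U_0$, whence $R=\Lambdamb S\Lambdamb^{-1}=0$ on $U_0$. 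This yields the asserted equivalence between $\GAb{1v}-\GAb{2u}+[\GAb{1},\GAb{2}]=0$ and $\TAb{1v}-\TAb{2u}+[\TAb{1},\TAb{2}]=0$.
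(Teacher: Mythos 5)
Your proposal is correct and follows essentially the same route as the paper: both establish the intertwining identity $\Lambdamb\bigl(\TAb{1v}-\TAb{2u}+[\TAb{1},\TAb{2}]\bigr)=\bigl(\GAb{1v}-\GAb{2u}+[\GAb{1},\GAb{2}]\bigr)\Lambdamb$ on the dense open set where $\Lambdamb$ is invertible and then conclude by density and smoothness. The only cosmetic difference is that the paper obtains this identity by differentiating the relations \eqref{f1} and \eqref{f2} directly and back-substituting, which avoids the extra bookkeeping of differentiating $\Lambdamb^{-1}$ in your expansion of $\TAb{i}=\Lambdamb^{-1}(\GAb{i}\Lambdamb-\Lambdamb_{u})$.
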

\begin{proof}
	Deriving (\ref{f1}) in $v$, (\ref{f2}) in $u$ we get:
\begin{subequations}
		\begin{align}
		\Lambdamb_v\TAb{1}+\Lambdamb\TAb{1v}=\GAb{1v}\Lambdamb+\GAb{1}\Lambdamb_v-\Lambdamb_{uv}\label{f1,1}\\
		\Lambdamb_u\TAb{2}+\Lambdamb\TAb{2u}=\GAb{2u}\Lambdamb+\GAb{2}\Lambdamb_u-\Lambdamb_{vu}\label{f2,1}
		\end{align}
	\end{subequations}
	Subtracting (\ref{f2,1}) from (\ref{f1,1})
	\begin{subequations}
		\begin{align}
		\Lambdamb(\TAb{1v}-\TAb{2u})+\Lambdamb_v\TAb{1}-\Lambdamb_u\TAb{2}=(\GAb{1v}-\GAb{2u})\Lambdamb+\GAb{1}\Lambdamb_v-\GAb{2}\Lambdamb_u\label{f3}
		\end{align}
	\end{subequations}
Substituting (\ref{f1}) and (\ref{f2}) in (\ref{f3}) on right side  
\begin{subequations}
	\begin{align}
	\Lambdamb(\TAb{1v}-\TAb{2u})+\Lambdamb_v\TAb{1}-\Lambdamb_u\TAb{2}=(\GAb{1v}-\GAb{2u})\Lambdamb+\GAb{1}\GAb{2}\Lambdamb-\GAb{2}\GAb{1}\Lambdamb-\GAb{1}\Lambdamb\TAb{2}\nonumber\\
	+\GAb{2}\Lambdamb\TAb{1}\label{f3.1}
	\end{align}
\end{subequations}
Then,
\begin{subequations}
\begin{align}
	\Lambdamb(\TAb{1v}-\TAb{2u})+(\GAb{1}\Lambdamb-\Lambdamb_u)\TAb{2}+(\Lambdamb_v-\GAb{2}\Lambdamb)\TAb{1}=(\GAb{1v}-\GAb{2u})\Lambdamb+[\GAb{1},\GAb{2}]\Lambdamb\label{f3.2}
\end{align}
\end{subequations}
Using (\ref{f1}) and (\ref{f2}) on the left side
\begin{subequations}
	\begin{align}
	\Lambdamb(\TAb{1v}-\TAb{2u})-\Lambdamb\TAb{2}\TAb{1}+\Lambdamb\TAb{1}\TAb{2}=(\GAb{1v}-\GAb{2u})\Lambdamb+[\GAb{1},\GAb{2}]\Lambdamb\label{f3.2}
	\end{align}
\end{subequations}
Therefore, we have
\begin{subequations}
	\begin{align}
	\Lambdamb(\TAb{1v}-\TAb{2u}+[\TAb{1},\TAb{2}])=(\GAb{1v}-\GAb{2u}+[\GAb{1},\GAb{2}])\Lambdamb\label{f3.3}
	\end{align}
\end{subequations}
As $U-det^{-1}(\Lambdamb)(0)$ is dense in $U$ and $\Lambdamb$ is invertible there, we have the result. 
\end{proof}

\begin{lem}\label{lema2}
		If we have:
	\begin{subequations}
		\begin{align}
		\Ib=\Lambdamb\Ib_\Omega\Lambdamb^T\label{ff11}\\
		\GAb{1}\Lambdamb-\Lambdamb_u=\Lambdamb\TAb{1}\label{ff12}\\
		\GAb{2}\Lambdamb-\Lambdamb_v=\Lambdamb\TAb{2}\label{f22}
		\end{align}
	\end{subequations}
in which $\Ib,\Ib_\Omega,\Lambdamb,\TAb{1},\TAb{2}:U \to \mathcal{M}_{n\times n}(\R)$ and $\GAb{1},\GAb{2}:U-det^{-1}(\Lambdamb)(0) \to \mathcal{M}_{n\times n}(\R)$ are smooth maps with $int(det^{-1}(\Lambdamb)(0))=\emptyset$ and $det(\Ib_\Omega)\neq0$. Then,
\begin{itemize}
\item $\Ib\GAb{1}^T+\GAb{1}\Ib=\Ib_u$ if and only if, $\Ib_\Omega\TAb{1}^T+\TAb{1}\Ib_\Omega=\Ib_{\Omega u}$ on $U$.
\item $\Ib\GAb{2}^T+\GAb{2}\Ib=\Ib_v$ if and only if, $\Ib_\Omega\TAb{2}^T+\TAb{2}\Ib_\Omega=\Ib_{\Omega v}$ on $U$.	
\end{itemize}
\end{lem}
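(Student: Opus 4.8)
The plan is to mirror the strategy of Lemma \ref{l1}: prove an algebraic identity on the dense open set where $\Lambdamb$ is invertible, and then transfer the equivalence to all of $U$ by density and smoothness. Set $W:=U-det^{-1}(\Lambdamb)(0)$, which is open and dense since $int(det^{-1}(\Lambdamb)(0))=\emptyset$. On $W$ the maps $\GAb{1},\GAb{2}$ are defined, $\Lambdamb$ is invertible, and the three standing hypotheses (\ref{ff11}), (\ref{ff12}), (\ref{f22}) all hold.

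The key step is to establish, on $W$, the identity
$$\Ib\GAb{1}^T+\GAb{1}\Ib-\Ib_u=\Lambdamb\big(\Ib_\Omega\TAb{1}^T+\TAb{1}\Ib_\Omega-\Ib_{\Omega u}\big)\Lambdamb^T.$$
First I would differentiate (\ref{ff11}) in $u$ to obtain $\Ib_u=\Lambdamb_u\Ib_\Omega\Lambdamb^T+\Lambdamb\Ib_{\Omega u}\Lambdamb^T+\Lambdamb\Ib_\Omega\Lambdamb_u^T$. Then, using (\ref{ff12}) in the form $\GAb{1}\Lambdamb=\Lambdamb_u+\Lambdamb\TAb{1}$ together with $\Ib=\Lambdamb\Ib_\Omega\Lambdamb^T$, I would rewrite $\GAb{1}\Ib=(\Lambdamb_u+\Lambdamb\TAb{1})\Ib_\Omega\Lambdamb^T$ and $\Ib\GAb{1}^T=\Lambdamb\Ib_\Omega(\Lambdamb_u+\Lambdamb\TAb{1})^T$. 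Adding these two and subtracting $\Ib_u$, the cross terms $\Lambdamb_u\Ib_\Omega\Lambdamb^T$ and $\Lambdamb\Ib_\Omega\Lambdamb_u^T$ cancel, leaving exactly the claimed factorization. Note that this manipulation uses only (\ref{ff11}) and (\ref{ff12}) and never an explicit inverse of $\Lambdamb$, so the identity is a consequence of the standing hypotheses alone, independent of either compatibility relation.

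With this identity in hand the equivalence is immediate on $W$: since $\Lambdamb$ is invertible there, the left-hand side vanishes on $W$ if and only if $\Ib_\Omega\TAb{1}^T+\TAb{1}\Ib_\Omega-\Ib_{\Omega u}=0$ on $W$. Hence $\Ib\GAb{1}^T+\GAb{1}\Ib=\Ib_u$ holds on $W$ exactly when $\Ib_\Omega\TAb{1}^T+\TAb{1}\Ib_\Omega=\Ib_{\Omega u}$ holds on $W$. Because $\Ib_\Omega$, $\TAb{1}$ and $\Ib_{\Omega u}$ are smooth on all of $U$ and $W$ is dense, this last equality holds on $W$ if and only if it holds on $U$, which gives the stated biconditional. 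The second item follows verbatim, replacing $u$ by $v$ and (\ref{ff12}) by (\ref{f22}).

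I do not expect a serious obstacle here. The only points requiring care are the bookkeeping in the cancellation, namely checking that the two $\Lambdamb_u\Ib_\Omega\Lambdamb^T$-type cross terms match and disappear, and the justification that vanishing on $W$ propagates to $U$ — which is precisely where the hypothesis $int(det^{-1}(\Lambdamb)(0))=\emptyset$ enters, exactly as in Lemma \ref{l1}.
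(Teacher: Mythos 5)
Your proof is correct and follows essentially the same route as the paper's: both work on the dense open set where $\Lambdamb$ is invertible, convert one equation into the other using the three standing hypotheses, and conclude by density of that set together with smoothness of $\Ib_\Omega\TAb{1}^T+\TAb{1}\Ib_\Omega-\Ib_{\Omega u}$ on $U$. Your factorization $\Ib\GAb{1}^T+\GAb{1}\Ib-\Ib_u=\Lambdamb\bigl(\Ib_\Omega\TAb{1}^T+\TAb{1}\Ib_\Omega-\Ib_{\Omega u}\bigr)\Lambdamb^T$ is merely a cleaner packaging of the paper's computation (it gives both directions at once and avoids differentiating $\Lambdamb^{-1}$), not a different method.
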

\begin{proof}
	The proof of the second item is analogous to the first one, so we are going to prove just the first. For $\p\in U-det^{-1}(\Lambdamb)(0)$, by (\ref{ff12}) we have,
	\begin{align}\label{tg}
	\Ib_\Omega\TAb{1}^T+\TAb{1}\Ib_\Omega&=\Ib_\Omega(\Lambdamb^T\GAb{1}^T-\Lambdamb^T_u)(\Lambdamb^T)^{-1}+\Lambdamb^{-1}(\GAb{1}\Lambdamb-\Lambdamb_u)\Ib_\Omega\\
	&=\Ib_\Omega\Lambdamb\GAb{1}^T(\Lambdamb^T)^{-1}+\Lambdamb^{-1}\GAb{1}\Lambdamb\Ib_\Omega-\Ib_\Omega\Lambdamb^T_u(\Lambdamb^T)^{-1}-\Lambdamb^{-1}\Lambdamb_u\Ib_\Omega\nonumber	
	\end{align}
	On the other hand, $\Lambdamb^{-1}\Lambdamb=\id_n$, then $\Lambdamb^T_u(\Lambdamb^T)^{-1}=-\Lambdamb^T((\Lambdamb^T)^{-1})_u$, $\Lambdamb^{-1}\Lambdamb_u=-(\Lambdamb^{-1})_u\Lambdamb$. Also, from (\ref{ff11}) $\Ib_\Omega\Lambdamb^T=\Lambdamb^{-1}\I$, $\Lambdamb\Ib_\Omega=\Ib(\Lambdamb^T)^{-1}$ substituting the last four equalities in (\ref{tg}) we get:
	\begin{align}
	\Ib_\Omega\TAb{1}^T+\TAb{1}\Ib_\Omega&=\Lambdamb^{-1}\Ib\GAb{1}^T(\Lambdamb^T)^{-1}+\Lambdamb^{-1}\GAb{1}\Ib(\Lambdamb^T)^{-1}-\Ib_\Omega\Lambdamb^T_u(\Lambdamb^T)^{-1}-\Lambdamb^{-1}\Lambdamb_u\Ib_\Omega\nonumber\\
	&=\Lambdamb^{-1}(\Ib\GAb{1}^T+\GAb{1}\Ib)(\Lambdamb^T)^{-1}+\Ib_\Omega\Lambdamb^T((\Lambdamb^T)^{-1})_u+(\Lambdamb^{-1})_u\Lambdamb\Ib_\Omega\nonumber\\
	&=\Lambdamb^{-1}(\Ib\GAb{1}^T+\GAb{1}\Ib)(\Lambdamb^T)^{-1}+\Lambdamb^{-1}\Ib((\Lambdamb^T)^{-1})_u+(\Lambdamb^{-1})_u\Ib(\Lambdamb^T)^{-1}\nonumber	
	\end{align}
By hypothesis $\Ib\GAb{1}^T+\GAb{1}\Ib=\Ib_u$, then
\begin{align}
\Ib_\Omega\TAb{1}^T+\TAb{1}\Ib_\Omega&=\Lambdamb^{-1}\Ib_u(\Lambdamb^T)^{-1}+\Lambdamb^{-1}\Ib((\Lambdamb^T)^{-1})_u+(\Lambdamb^{-1})_u\Ib(\Lambdamb^T)^{-1}\nonumber\\
&=(\Lambdamb^{-1}\Ib(\Lambdamb^T)^{-1})_u=\Ib_{\Omega u}\nonumber
\end{align}
By density of $U-det^{-1}(\Lambdamb)(0)$, $\Ib_\Omega\TAb{1}^T+\TAb{1}\Ib_\Omega=\Ib_{\Omega u}$ holds on $U$. The converse is obtained in the same way.     
\end{proof}

\begin{proof}[Proof. Teorema \ref{TF}(Existence)]
	By proposition \ref{metric} there exist $\TA{1}, \TA{2}:U \to \mathcal{M}_{2\times2}(\R)$ smooth maps such that on $(\laO^{-1}(0))^c$,
	\begin{subequations}
		\begin{align}
		&\TA{1}=\Lambdam^{-1}(\GA{1}\Lambdam-\Lambdam_u),\label{CTFF1}\\  &\TA{2}=\Lambdam^{-1}(\GA{2}\Lambdam-\Lambdam_v).\label{CTFF2}
		\end{align}
	\end{subequations}
	 Let us construct $\TAb{1}$ and $\TAb{2}$ as the matrices $\P$ and $\Q$ in (\ref{P}) and (\ref{Q}) respectively, using (\ref{W}), (\ref{IO}) and (\ref{IIO}). By (\ref{CTFF1}), (\ref{CTFF2}) and since $\boldsymbol{\alpha}\Lambdam=\boldsymbol{\mu}$ on $(\laO^{-1}(0))^c$ (caused by (\ref{FFS1}) and (\ref{FFS2})) we have for all $(u,v)\in (\laO^{-1}(0))^c$,
	\begin{subequations}
	\begin{align}
	\GAb{1}\Lambdamb-\Lambdamb_u=\Lambdamb\TAb{1}\text{ and }\GAb{2}\Lambdamb-\Lambdamb_v=\Lambdamb\TAb{2}\nonumber
	\end{align}
	\end{subequations}
where,
\begin{align}
\GAb{1}&=\begin{pmatrix}
\Ga{1}{1}{1} & \Ga{1}{2}{1} & e \\	
\Ga{2}{1}{1} & \Ga{2}{2}{1} & f \\
\alpha_{11} & \alpha_{12} & 0
\end{pmatrix}=\begin{pmatrix}
\frac{1}{2}E_u & (F_u-\frac{1}{2}E_v) & e \\	
\frac{1}{2}E_v & \frac{1}{2}G_u & f \\
-e & -f & 0
\end{pmatrix}\begin{pmatrix}
E & F & 0 \\
F & G & 0 \\
0 & 0 & 1\end{pmatrix}^{-1}\label{Gab1}\\ 
\GAb{2}&=\begin{pmatrix}
\Ga{1}{1}{2} & \Ga{2}{1}{2} & \alpha_{21} \\
\Ga{1}{2}{2} & \Ga{2}{2}{2} & \alpha_{22} \\
f & g & 0\end{pmatrix}=\begin{pmatrix}
\frac{1}{2}E_v & \frac{1}{2}G_u & -f \\
(F_v-\frac{1}{2}G_u) & \frac{1}{2}G_v & -g \\
f & g & 0\end{pmatrix}\begin{pmatrix}
E & F & 0 \\
F & G & 0 \\
0 & 0 & 1\end{pmatrix}^{-1}\label{Gab2}\\
\Lambdamb&=\begin{pmatrix}
\lambda_{11} & \lambda_{12} & 0 \\
\lambda_{21} & \lambda_{22} & 0 \\
0 & 0 & 1\end{pmatrix}
\end{align} 
Let $(u_0,v_0)\in U$, $\mathbf{q}\in\R^3$ be fixed points and since $\EO\GO-\FO^2>0$ we can find $\mathbf{z}_1$, $\mathbf{z}_2$, $\mathbf{z}_3$ fixed vectors of $\R^3$ linearly independent and positively oriented such that $\mathbf{z}_1\cdot\mathbf{z}_1=\EO(u_0,v_0)$, $\mathbf{z}_1\cdot\mathbf{z}_2=\FO(u_0,v_0)$, $\mathbf{z}_2\cdot\mathbf{z}_2=\GO(u_0,v_0)$, $\mathbf{z}_3\cdot\mathbf{z}_3=1$ and $\mathbf{z}_3\cdot\mathbf{z}_i=0$ for $i=1, 2$. Consider the system of PDE,
	\begin{subequations}
		\begin{align}
		\Wm^T_u&=\TAb{1}\Wm^T\label{eq1}\\ 
		\Wm^T_v&=\TAb{2}\Wm^T\label{eq2}\\
		\Wm(u_0,v_0)&=\begin{pmatrix}\mathbf{z}_1&\mathbf{z}_2&\mathbf{z}_3\end{pmatrix}
		\end{align}
	\end{subequations}
It is known in classical differential geometry that, the Gauss and Mainardi-Codazzi equations are equivalent to $\GAb{1v}-\GAb{2u}+[\GAb{1},\GAb{2}]=0$, then as this is satisfied, by lemmma \ref{l1}  $\TAb{1v}-\TAb{2u}+[\TAb{1},\TAb{2}]=0$ in $U$ which is the compatibility condition of the above system of equations. By corollary \ref{frobl}, this system has a unique solution $\Wm:\bar{V}\to GL(3)$, where  $\bar{V}$ is a neighborhood of $(u_0,v_0)$. Since $det(\Wm(u_0,v_0))>0$, restricting $\bar{V}$ if it is necessary, we can suppose that $det(\Wm)>0$ on $\bar{V}$. Setting the matrices, 
\begin{align}
	\Ib&:=\begin{pmatrix}
	E & F & 0 \\
	F & G & 0 \\
	0 & 0 & 1\end{pmatrix}, \Ib_\Omega:=\begin{pmatrix}
	\EO & \FO & 0 \\
	\FO & \GO & 0 \\
	0 & 0 & 1\end{pmatrix}\nonumber
	\end{align}
\begin{align}
\mathds{Y}:=\Wm^T\Wm\label{XS}
\end{align}
We want to prove that $\Ib_\Omega=\mathds{Y}$. Consider the following system of PDE. 
\begin{subequations}
	\label{epf}
	\begin{align}
	\mathds{Y}_u&=\mathds{Y}\TAb{1}^T+\TAb{1}\mathds{Y}\\ 
	\mathds{Y}_v&=\mathds{Y}\TAb{2}^T+\TAb{2}\mathds{Y}\\
	\mathds{Y}(u_0,v_0)&=\Ib_\Omega(u_0,v_0)
	\end{align}
\end{subequations}
Defining $\mathbf{\Theta}(u,v,\mathbf{X}):=\mathbf{X}\TAb{1}^T+\TAb{1}\mathbf{X}$ and $\mathbf{\Xi}(u,v,\mathbf{X}):=\mathbf{X}\TAb{2}^T+\TAb{2}\mathbf{X}$ for $\mathbf{X}\in \mathcal{M}_{3\times 3}(\R)$, we can compute the compatibility condition \ref{compatibility1} and we get:
\begin{align}
	&\mathbf{X}\TAb{1v}^T+\TAb{1v}\mathbf{X}+(\mathbf{X}\TAb{2}^T+\TAb{2}\mathbf{X})\TAb{1}^T+\TAb{1}(\mathbf{X}\TAb{2}^T+\TAb{2}\mathbf{X})\\ 
	=&\mathbf{X}\TAb{2u}^T+\TAb{2u}\mathbf{X}+(\mathbf{X}\TAb{1}^T+\TAb{1}\mathbf{X})\TAb{2}^T+\TAb{2}(\mathbf{X}\TAb{1}^T+\TAb{1}\mathbf{X})\nonumber
\end{align}
Eliminating common terms and grouping we have:
\begin{align}
&\mathbf{X}(\TAb{1v}^T-\TAb{2u}^T)+(\TAb{1v}-\TAb{2u})\mathbf{X}\\ 
=&\mathbf{X}(\TAb{1}^T\TAb{2}^T-\TAb{2}^T\TAb{1}^T)+(\TAb{2}\TAb{1}-\TAb{1}\TAb{2})\mathbf{X}\nonumber
\end{align}
then,
\begin{align}
&\mathbf{X}(\TAb{1v}-\TAb{2u}+[\TAb{1},\TAb{2}])^T+(\TAb{1v}-\TAb{2u}+[\TAb{1},\TAb{2}])\mathbf{X}=0\label{comp}
\end{align}
As $\TAb{1v}-\TAb{2u}+[\TAb{1},\TAb{2}]=0$, (\ref{comp}) is satisfied for all $\mathbf{X}\in \mathcal{M}_{3\times 3}(\R)$, then by theorem \ref{frobg} the system of PDE (\ref{epf}) has unique solution. On the other hand, using (\ref{eq1}) and (\ref{eq2}), it can be verified easily that $\mathds{X}$ defined in \ref{XS} is a solution of the system \ref{epf}. Also by (\ref{Gab1}) and (\ref{Gab2}) we have $\Ib\GAb{1}^T+\GAb{1}\Ib=\Ib_u$ and $\Ib\GAb{2}^T+\GAb{2}\Ib=\Ib_v$ on $(\lambda^{-1}(0))^c$, then by lemma \ref{lema2}, $\Ib_\Omega\TAb{1}^T+\TAb{1}\Ib_\Omega=\Ib_{\Omega u}$ and $\Ib_\Omega\TAb{2}^T+\TAb{2}\Ib_\Omega=\Ib_{\Omega v}$ on $U$, it means, $\Ib_\Omega$ is also a solution of the system \ref{epf}, therefore by uniqueness $\Ib_\Omega=\mathds{Y}$ on any neighborhood $\hat{V}$ of $(u_0,v_0)$. Now, as $\Ib_\Omega=\Wm^T\Wm$, we have that $\w_{3}$ is orthogonal to $\w_{1}$,$\w_{2}$ and $\w_{3}\cdot \w_{3}=1$. Since $det(\Wm)>0$, $\n:=\frac{\w_1\times\w_2}{\|\w_1\times\w_2\|}=\w_{3}$ and if we define $\Omegam:=\begin{pmatrix}\w_1&\w_2\end{pmatrix}$ then, $$\Omegam^T\Omegam=\begin{pmatrix}
\EO&\FO\\
\FO&\GO
\end{pmatrix}=\I_\Omega$$
from (\ref{eq1}) and (\ref{eq2}) we have,
\begin{subequations}
	\begin{align}
	\begin{pmatrix}
	\Ta{1}{1}{1} & \Ta{1}{2}{1} & \eo \\	
	\Ta{2}{1}{1} & \Ta{2}{2}{1} & \fdo \\
	\mu_{11} & \mu_{12} & 0
	\end{pmatrix}=\Wm^T_u\Wm\Ib_\Omega^{-1}&=\begin{pmatrix}\Omegam_u^T\Omegam\I_\Omega^{-1}&\Omegam_u^T\n\\
	\n_u^T\Omegam\I_\Omega^{-1}&0 \end{pmatrix}\\ 
	\begin{pmatrix}
	\Ta{1}{1}{2} & \Ta{1}{2}{2} & \fuo \\
	\Ta{2}{1}{2} & \Ta{2}{2}{2} & \go \\
	\mu_{21}     &   \mu_{22}   & 0\end{pmatrix}=\Wm^T_v\Wm\Ib_\Omega^{-1}&=\begin{pmatrix}\Omegam_v^T\Omegam\I_\Omega^{-1}&\Omegam_v^T\n\\
	\n_v^T\Omegam\I_\Omega^{-1}&0 \end{pmatrix}
	\end{align}
\end{subequations}
then,
\begin{subequations}
	\begin{align}
	&\TA{1}=\begin{pmatrix}
	\Ta{1}{1}{1} & \Ta{1}{2}{1}\\
	\Ta{2}{1}{1} & \Ta{2}{2}{1}
	\end{pmatrix}=(\Omegam_u^T\Omegam)\I_\Omega^{-1}\text{ and }\TA{2}=\begin{pmatrix}
	\Ta{1}{1}{2} & \Ta{1}{2}{2}\\
	\Ta{2}{1}{2} & \Ta{2}{2}{2}
	\end{pmatrix}=(\Omegam_v^T\Omegam)\I_\Omega^{-1}\nonumber
	\\
	&\II_\Omega=\begin{pmatrix}
\n\cdot\w_{1u} & \n\cdot\w_{1v}\\
\n\cdot\w_{2u} & \n\cdot\w_{2v}
\end{pmatrix}=\begin{pmatrix}
\eo&\fuo\\
\fdo&\go
\end{pmatrix}\nonumber 
\end{align}
\end{subequations}
Let us consider the system of PDE restricted to $\hat{V}$,
\begin{subequations}
\label{exist}
\begin{align}
&\x_u=\lambda_{11}\w_1+\lambda_{12}\w_2\\
&\x_v=\lambda_{21}\w_1+\lambda_{22}\w_2\\
&\x(u_0,v_0)=\mathbf{q}
\end{align}
\end{subequations}
As,
$$\begin{pmatrix}
0&1
\end{pmatrix}(\Lambdam\TA{1}+\Lambdam_u)=\begin{pmatrix}
0&1
\end{pmatrix}\GA{1}\Lambdam=\begin{pmatrix}
1&0
\end{pmatrix}\GA{2}\Lambdam=\begin{pmatrix}
1&0
\end{pmatrix}(\Lambdam\TA{2}+\Lambdam_v)$$
for $(u,v)\in(\lambda^{-1}(0))^c$, then by density $$\begin{pmatrix}
0&1
\end{pmatrix}(\Lambdam\TA{1}+\Lambdam_u)=\begin{pmatrix}
1&0
\end{pmatrix}(\Lambdam\TA{2}+\Lambdam_v)$$ on the entire $U$, as also, by (\ref{FFS2}) $\Lambdam\II_\Omega$ is symmetric, then the singular compatibility equations (\ref{CS1}), (\ref{CS2}) and (\ref{CS3}) are satisfied, which are the compatibility condition of the system (\ref{exist}). Therefore by theorem \ref{frobg}, this system has a solution $\x:V \to \x(V)\subset \R^3$, where $V\subset\hat{V}$ is a neighborhood of $(u_0,v_0)$. As $D\x=\Omegam\Lambdam^T$, by proposition \ref{OD}, $\x$ is a frontal with $\Omegam$ being a tangent moving base of it, satisfying what we wished.            
\end{proof}
\begin{proof}[Proof. Teorema \ref{TF}(Rigidity)]
Let ${\mathbf{\bar{\x}}}:U \to {\mathbf{\bar{\x}}}(U)\subset \R^3$ be a frontal, $U$ connected, with $\bar{\Omegam}$ a tangent moving base of $\bar{\x}$ satisfying the same conditions of $\x$ and $\Omegam$. As $\I_\Omega=\I_{\bar{\Omega}}$, exists a rotation $\rhom \in SO(3)$ such that $\rhom\Omegam(u_0,v_0)=\bar{\Omegam}(u_0,v_0)$. Set $\mathbf{a}:=\bar{\x}(u_0,v_0)-\rhom\x(u_0,v_0)$, $\hat{\x}:=\rhom\x+\mathbf{a}$ and $\hat{\Omegam}:=\rhom\Omegam$. Observe that, $\bar{\x}(u_0,v_0)=\hat{\x}(u_0,v_0)$, $\hat{\Omegam}(u_0,v_0)=\bar{\Omegam}(u_0,v_0)$, $D\hat{\x}=\hat{\Omegam}\Lambdam^T$,  $\I_\Omega=\I_{\hat{\Omega}}$ and $\II_\Omega=\II_{\hat{\Omega}}$ (caused by $\rhom\w_{1}\times\rhom\w_{2}=\rhom(\w_1\times\w_2)$). Also by remark \ref{rem1} $\TA{i}=\TAb{i}=\TAh{i}$. We want to prove that $\bar{\x}=\hat{\x}$ on $U$, so, let us define the set,$$\mathcal{B}:=\{(u,v)\in U:\bar{\Omegam}(u,v)=\hat{\Omegam}(u,v)\}$$
$\mathcal{B}$ is not empty and closed by continuity. For each $(\bar{u},\bar{v})\in\mathcal{B}$, as we saw in section 4, $\begin{pmatrix}\bar{\Omegam}&\bar{\n} \end{pmatrix}$ is a solution of the system:
\begin{subequations}
\begin{align}
&\Wm^T_u=\P\Wm^T\\ 
&\Wm^T_v=\Q\Wm^T\\
&\Wm(\bar{u},\bar{v})=\begin{pmatrix}\bar{\Omegam}(\bar{u},\bar{v})&\bar{\n}(\bar{u},\bar{v}) \end{pmatrix}
\end{align}
\end{subequations}
As the matrices $\P$ (\ref{P}) and $\Q$ (\ref{Q}) are constructed with the coefficients of $\I_\Omega$, $\II_\Omega$ and $\TA{i}$, then $\begin{pmatrix}\hat{\Omegam}&\hat{\n} \end{pmatrix}$ is solution of the system as well and by uniqueness, $\hat{\Omegam}=\bar{\Omegam}$ on a neighborhood of $(\bar{u},\bar{v})$. We have that $\mathcal{B}$ is open and since $U$ is connected, $\mathcal{B}=U$. Therefore, $D\bar{\x}=\bar{\Omegam}\Lambdam^T=\hat{\Omegam}\Lambdam^T=D\hat{\x}$ and since $\bar{\x}(u_0,v_0)=\hat{\x}(u_0,v_0)$, $\bar{\x}=\hat{\x}$ on $U$.       
\end{proof}
\begin{rem}
In theorem \ref{TF} can be switched the hypothesis of $E,F,G,e,f,g$ satisfying the Gauss and Mainardi-Codazzi equations for all $(u,v)\in U-\laO^{-1}(0)$ by $\EO,\FO,\GO,\eo,\fuo,\fdo,\go$ satisfying the equations (\ref{GaussT}), (\ref{c7}) and (\ref{c8}) on $U$, where $\TA{1},\TA{2}$ are defined as in proposition \ref{metric} (see remark \ref{tau}). Since $\TAb{1v}-\TAb{2u}+[\TAb{1},\TAb{2}]=0$ is equivalent to (\ref{GaussT}), (\ref{c7}) and (\ref{c8}), using lemma \ref{l1} these two different hypothesis are equivalent, then we obtain the same result in the theorem. By last, the frontal obtained is going to be a wave front if $(\KO,\HO)\neq(0,0)$ on the domain, where $\KO,\HO$ are computed with the given coefficients $\EO,\FO,\GO,\eo,\fuo,\fdo,\go$ and $\lambda_{ij}$.      
\end{rem}	

\def\cprime{$'$}

\end{document}